\newtheorem{thm}{Theorem}[section]
\newtheorem{lem}[thm]{Lemma}
\theoremstyle{definition}
\theoremstyle{remark}
\newcommand{\R}{\mathbb{R}}
\newcommand{\C}{\mathbb{C}}
\newcommand{\Z}{\mathbb{Z}}
\newcommand{\N}{\mathbb{N}}
\newcommand{\Do}{\mathcal{D}}
\DeclareMathOperator*{\im}{Im}%
\DeclareMathOperator*{\Span}{span}
\begin{document}


\title{Toeplitz Operators on Two Poly-Bergman-Type Spaces of the 
	Siegel Domain $D_2 \subset \mathbb{C}^2$ with Continuous Nilpotent Symbols.}

\author
{Yessica Hern\'andez-Eliseo \\
	Centro de Investigaci\'on en Matem\'aticas, M\'exico \\
E-mail: yeherel@gmail.com \\ \\
%
	Josu\'e Ram\'irez-Ortega \\ 
	Universidad Veracruzana, M\'exico \\
E-mail: josramirez@uv.mx \\ \\
%
	Francisco G. Hern\'andez-Zamora \\ 
	Universidad Veracruzana, M\'exico \\
E-mail: francischernandez@uv.mx
}

%

\date{Mayo, 2023}

	\maketitle
	\begin{abstract} 
We describe certain $C^*$-algebras generated by Toeplitz operators with nilpotent symbols and acting on a poly-Bergman type space of the Siegel domain $D_{2} \subset \mathbb{C}^{2}$.  Bounded measurable functions of  the form $\tilde{c}(\zeta) = c(\text{Im}\, \zeta_{1}, \text{Im}\, \zeta_{2} - |\zeta_1|^{2})$ are called nilpotent symbols. In this work we consider symbols of the form $\tilde{a}(\zeta) = a(\text{Im}\, \zeta_1)$ and $\tilde{b}(\zeta) = b(\text{Im}\, \zeta_2 -|\zeta_1|^{2})$, where both limits  $\lim\limits_{s\rightarrow 0^+} b(s)$ and  $\lim\limits_{s\rightarrow +\infty} b(s)$ exist, and $a$ belongs to the set of piece-wise continuous functions on $\overline{\mathbb{R}}=[-\infty,+\infty]$ and with one-sided limits at $0$. We describe certain $C^*$-algebras generated by such Toeplitz operators that turn out to be isomorphic to subalgebras of $M_n(\mathbb{C}) \otimes C(\overline{\Pi})$, where $\overline{\Pi}=\overline{\R} \times \overline{\R}_+$ and $\overline{\mathbb{R}}_+=[0,+\infty]$.
	\end{abstract}


	\section{Introduction.}

In recent years, the theory of Toeplitz operators has been generalized from Bergman spaces of square-integrable holomorphic functions to poly-Bergman spaces of square-integrable polyanalytic functions \cite{vasilev-SPB, vasilev-SPF}. Bianalytic functions em\-erged in the mathematical theory of elasticity, but the mathematical relevance of more general polyanalytic functions was soon realized \cite{Balk}.

Similar to the study of Toeplitz operators on spaces of analytic functions, we select a set of symbols  $E \subset L^{\infty}$ in such a way that the $C^*$-algebra and von Neumann algebra generated by Toeplitz operators with symbols in $ E $ can be explicitly described up to isomorphism, say, as an algebra of matrix-valued functions.

In this paper, we study Toeplitz operators with nilpotent symbols and acting on 
a poly-Bergman-type space on the Siegel domain $D_{2} \subset \mathbb{C}^{2}$. 
In \cite{QV-Ball1,QV-Ball2,LibroVasilevski},  the authors fully described all commutative
$C^*$-algebras generated by Toeplitz operators acting on the Bergman spaces of both the unit disk $\mathbb{D}$ and the Siegel domain  $D_n \subset \C^n$ with symbols invariant under the action
of a maximal Abelian subgroup of biholomorphisms.

Let $\Pi=\{z=x+iy \in \C \mid  y>0\}$ be the upper half-plane. 
Toeplitz operators with vertical symbols (which depend  on $y=\im z$) acting on  poly-Bergman-type  spaces are well studied.
Taking vertical symbols with limits at $y=0$ and $y=\infty$, in \cite{R-SN,R-RM-V} the authors described the $C^*$-algebra generated by all Toeplitz operators on the poly-Bergman space $\mathcal{A}^{2}_{n}(\Pi)$;
this algebra is isomorphic to a subalgebra of $M_n(\mathbb{C}) \otimes C[0,+\infty]$.
Similar research has studied Toeplitz operators on poly-Bergman spaces with homogeneous symbols (\cite{R-LL,R-RM-H}). In \cite{Hutnik,Hutnik-2,R-RM-V},  the authors  studied 
Toeplitz operators acting on the true-poly-Bergman space $\mathcal{A}^{2}_{(n)}(\Pi)$ from the point of view of wavelet spaces.
 Taking horizontal symbols having one-sided limits at $x=\pm \infty$,	in \cite{A-C-R-N, Esmeral-Vas} the authors studied Toeplitz operators acting on poly-Fock spaces $F_{k}^{2}(\C)$, they proved that the $C^*$-algebra generated by such Toeplitz operators is isomorphic to a subalgebra of
 $M_n(\mathbb{C}) \otimes C[-\infty,+\infty]$. In \cite{ME-TR, BMR, Bauer-Herr-Vas, Grudsky}, the authors  studied  the decomposition of the von Neumann algebra of radial operators acting on the poly-Fock spaces $F_{k}^{2}(\C)$ and weighted  poly-Bergman spaces $\mathcal{A}^{2}_{n}(\mathbb{D})$.

In \cite{QV-Ball1, QV-Ball2} the authors made remarkable progress in the study of Toeplitz operators acting
on the Bergman space of the Siegel domain $D_{n} \subset \mathbb{C}^n$. In particular, they studied
the $C^*$-algebra ${\cal T}_{{\cal N}_n}$   generated by all Toeplitz operators with
bounded nilpotent symbols, which are functions of the form  
$\tilde{c}(\zeta)=c(\im \zeta_{1},...,\im \zeta_{n-1}, \im \zeta_{n} - |\zeta'|^{2})$,
where $\zeta'=(\zeta_1,...,\zeta_{n-1})$.

Furthermore, in \cite{Y-R} the authors studied Toeplitz operators on the true-poly-Bergman-type space $\mathcal{A}^{2}_{(L)}(D_{2})$, with nilpotent symbols of the form
$\tilde{a}(\zeta) = a(\im \zeta_1)$ and $\tilde{b}(z) = b(\im \zeta_2 -|\zeta_1|^{2})$. 
The main purpose of this work is to describe the $C^*$-algebra generated by all the Toeplitz operators acting on the poly-Bergman-type spaces $\mathcal{A}^{2}_{(1,n)}(D_{2})$ and $\mathcal{A}^{2}_{(n,1)}(D_{2})$, we take nilpotent symbols of the form  $\tilde{a}(\zeta) = a(\im \zeta_1)$ and $\tilde{b}(\zeta) = b(\im \zeta_2 -|\zeta_1|^{2})$.

This paper is organized as follows.
In Section \ref{identificacion} we recall how poly-Bergman-type spaces are defined for the Siegel domain, and how
they can be identified with an $L^2$-space through a Bargmann-type transform.
In Section \ref{OT} we introduce the Toeplitz operators acting on $\mathcal{A}^{2}_{(1,n)}(D_{2})$  with  nilpotent symbols, and
we show that such Toeplitz operators are unitarily equivalent to multiplication operators. 

In Section \ref{seccion-sb} we take symbols of the form
$\tilde{b}(\zeta) = b(\im \zeta_2 -|\zeta_1|^{2})$  for which both limits $\lim\limits_{s\rightarrow 0^+} b(s)$
and  $\lim\limits_{s\rightarrow +\infty} b(s)$ exist;
the $C^*$-algebra generated by all Toeplitz operators $T_b$ is 
isomorphic to $\mathfrak{C}= \{M \in  M_n(\C)\otimes C[0, \infty]: M(0), M(+\infty) \in \C I\}$.
Let $PC(\overline{\R}, \{0\})$ denote  the set of all functions continuous on $\overline{\R}\setminus \{0\}$  and having one-side limit values at $0$, where $\overline{\R}$ is the two-point compactification
of $\R$.  In Section \ref{Alg-a} we take nilpotent symbols of the form
$\tilde{a}(\zeta) = a(\im \zeta_1)$, where $a\in PC(\overline{\R}, \{0\})$;
the $C^*$-algebra generated by all Toeplitz
operators $T_a$ is isomorphic to $C(\overline{\Pi})$, where  $\overline{\Pi}=\overline{\R} \times \overline{\R}_+$. 

In Section \ref{symb-discont} we introduce Toeplitz operators acting on $\mathcal{A}^{2}_{(n,1)}(D_{2})$ with  nilpotent symbols $\tilde{c}$, we show that such Toeplitz operators are unitarily equivalent to multiplication operators $\gamma^{c}I$ acting on $L^2 (\R \times \R_+)$, where $\gamma^{c}$ is a continuous matrix-valued function on $\Pi$. In this work we take symbols of the form $\tilde{a}(\zeta) = a(\im \zeta_1)$, where  $a \in PC(\overline{\R}, \{0\})$. In Section \ref{con-func} we prove that the matrix-valued function $\gamma^{a}$ can be continuously extended to $\overline{\Pi}$ under a change of variable, this is one of our main results.
In Section \ref{sep-est}  we prove that the $C^*$-algebra generated by all Toeplitz
operators $T_a$ is isomorphic to a $C^*$-subalgebra of
$M_n(\C)\otimes C(\overline{\Pi})$, thus the spectrum of such algebra is fully described.

	
	\section{Poly-Bergman type spaces of the Siegel domain.}\label{identificacion}
	
In this section, we recall  some results obtained in  \cite{R-SN-D} that are needed in this paper.
	Each $\zeta\in \mathbb{C}^n$ will be represented as an ordered pair
	$\zeta=(\zeta',\zeta_n)$, where $\zeta'=(\zeta_1,...,\zeta_{n-1}) \in \mathbb{C}^{n-1}$. The Euclidean norm will
	be denoted by $|\cdot|$. The $n$-dimensional Siegel domain is defined by
	$$D_{n}=\{\zeta=(\zeta',\zeta_{n}) \in \C^{n-1} \times \C : \im \zeta_{n}- |\zeta'|^{2} >0\}.$$
	We will study Toeplitz operators acting on certain poly-Bergman-type subspaces of 
  $L^{2}(D_{n}, d\mu_{\lambda})$,
	 where
	$d\mu_{\lambda}(\zeta)=(\im \zeta_{n}-|\zeta'|^{2})^{\lambda} d\mu (\zeta)$,  with $ \lambda >-1$,
	and $d\mu(\zeta)$ is the usual Lebesgue measure. 
	We clarify that if $X$ is any positive-measure subset of a Euclidean space, then $L^2(X)$ refers to 
	$L^2(X,d\mu)$, where $d\mu$ is Lebesgue measure restricted to $X$.
	
	For each multi-index $L=(l_{1},...,l_{n}) \in \mathbb{N}^n$, 
	the poly-Bergman-type space $\mathcal{A}^{2}_{\lambda L}(D_{n})$ is the closed subspace of $L^{2}(D_{n}, d\mu_{\lambda})$ 
	consisting of all $L$-analytic functions, that is, all functions $f\in C^\infty(D_n)$ satisfying the equations
	\begin{eqnarray*}
		\left( \frac{\partial}{\partial \overline{\zeta}_{m}}-2i\zeta_{m}\frac{\partial}{\partial \overline{\zeta}_{n}}  \right)^{l_{m}} f 
		&=&0, \;\; 1\leq m \leq n-1 \\
		\left( \frac{\partial}{\partial \overline{\zeta}_{n}} \right)^{l_{n}}f 
		&=&0.
	\end{eqnarray*}
	
	In particular, for  $L=(1,...,1)$, $\mathcal{A}^{2}_{\lambda L}(D_{n})$ is just the Bergman space.
	Likewise, the anti-poly-Bergman type  space $\widetilde{\mathcal{A}}^2_{\lambda L}(D_{n})$ is defined to be the complex conjugate
	of $\mathcal{A}^{2}_{\lambda L}(D_{n}).$ 
	Thus, we introduce  true-poly-Bergman-type spaces as follows:
	\begin{eqnarray*}
		\mathcal{A}^{2}_{\lambda(L)}(D_{n}) &=& \mathcal{A}^{2}_{\lambda L}(D_{n}) \ominus 
		\left( \sum_{m=1}^{n} \mathcal{A}^{2}_{\lambda,L-e_{m}}(D_{n}) \right),\\
		\widetilde{\mathcal{A}}^{2}_{\lambda(L)}(D_{n}) &=& \widetilde{\mathcal{A}}^{2}_{\lambda L}(D_{n}) \ominus 
		\left( \sum_{m=1}^{n} \widetilde{\mathcal{A}}^{2}_{\lambda,L-e_{m}}(D_{n}) \right),
	\end{eqnarray*}
	where  $e_{m}=(0,...,1,...,0)$ and the $1$ is placed at the $m$ entry. We assume that 
	$\mathcal{A}^{2}_{\lambda L}(D_{n})=\{0 \}$ whenever $L\in \Z^n \setminus \N^n$.
	
	In \cite{R-SN-D} the authors proved that $L^{2}(D_{n},d\mu_{\lambda})$ 
	is the direct sum of all the true-poly-Bergman-type spaces:
	$$L^{2}(D_{n}, d\mu_{\lambda})= 
	\left( \bigoplus_{L\in \mathbb{N}^{n} } \mathcal{A}^{2}_{\lambda(L)}(D_{n}) \right) \bigoplus 
	\left( \bigoplus_{L\in \mathbb{N}^{n} } \widetilde{\mathcal{A}}^{2}_{\lambda(L)}(D_{n}) \right).$$
	The authors also constructed a unitary map from $\mathcal{A}^{2}_{\lambda(L)}(D_{n})$  to the tensor product
	$$L^{2}(\R^{n-1})\otimes {\cal H}_{l_{1}-1}\otimes \cdot\cdot\cdot\otimes {\cal H}_{l_{n-1}-1} \otimes 
	L^{2}(\R_{+}) \otimes {\cal L}_{l_{n}-1},$$ 
	where $\mathbb{R}_+=(0,\infty)$.  Both the ${\cal H}_{m}$ and ${\cal L}_{m}$ are one-dimensional spaces defined below. 
	Recall the Hermite and Laguerre polynomials:
	$$H_m(y):= (-1)^m e^{y^2} \frac{d^m}{dy^m} (e^{-y^2}), \quad 
	L_m^{\lambda}(y):= e^y \frac{y^{-\lambda}}{m!}  \frac{d^m}{dy^m} (e^{-y}y^{m+\lambda})$$
	for $m=0,1,2,...$ Recall also the Hermite and Laguerre functions 
		$$h_m (y)= \frac{1}{(2^m\sqrt{\pi}m!)^{1/2}} H_m(y) e^{-y^2/2}, \quad 
		\ell_m^{\lambda} (y)=(-1)^m c_m L_m^{\lambda}(y) e^{-y/2},$$
		where $c_m=\sqrt{m!/\Gamma(m+\lambda+1)}$ and $\Gamma$ is the usual Gamma function.
	It is well known that $\{h_m \}_{m=0}^{\infty}$ and $\{\ell_m^{\lambda} \}_{m=0}^{\infty}$ are orthonormal
	bases for $L^2(\R)$ and $L^2(\mathbb{R}_+, y^{\lambda} dy)$, respectively. 
	Finally, ${\cal H}_m=\text{span}\{h_m\}$ and ${\cal L}_m=\text{span}\{\ell_m^{\lambda}\}$.
			
 In this work we restrict ourselves to the study of Toeplitz operators acting 
	on poly-Bergman-type spaces over the two-dimensional Siegel domain $D_{2}$ 
	with Lebesgue measure $d\mu$ (that is, $\lambda=0$). Henceforth,
	the space $\mathcal{A}^{2}_{0L}(D_{2})$  will be simply denoted by $\mathcal{A}^{2}_{L}(D_{2})$;
	similarly, $\ell_m(y)$ and $L_m(y)$ stand for $\ell_m^0(y)$ and $L_m^0(y)$, respectively.
	We will  study Toeplitz operators with nilpotent symbols acting on 
	$\mathcal{A}^{2}_{L}(D_{2})$ in both cases $L=(1,n)$ and $L=(n,1)$. 
	The poly-Bergman-type spaces $\mathcal{A}^{2}_{(1,n)}(D_{2})$ and $\mathcal{A}^{2}_{(n,1)}(D_{2})$   can be identified with $(L^{2}(\R \times \R_{+}))^n$ through 
	a Bargmann type transform (\cite{R-SN-D}).
Several operators are needed to define this unitary map.  To begin,
	we introduce the flat domain $\Do=\C \times \Pi $, where $\Pi= \R \times \R_{+} \subset \C$. 
	Then $\Do$ can be identified with  $D_{2}$ using  the mapping
	\begin{align*}\kappa: \Do & \rightarrow D_2 \\
	(w_{1},w_{2})&  \mapsto (w_{1}, w_{2}+i|w_{1}|^{2}), 
	 \end{align*}
	 whose real Jacobian determinant is everywhere equal to one.
Thus we have the unitary operator
	$U_{0}: L^{2}(D_{2}) \longrightarrow L^{2}(\Do)$
	given by
	$$ (U_{0}f)(w)=f(\kappa(w)).$$
  Take 
	$w=(w_{1}, w_{2}) \in \C \times \Pi$, with $w_{m}=u_m +i v_m$ and  $m=1,2$. We identify 
	$w=(u_1+ iv_1, u_2+iv_2)$ with $(u_1,v_1, u_2,v_2)$. Then
	$$L^{2}(\Do)= L^{2}(\R, du_1)\otimes L^{2}(\R,  dv_{1}) \otimes L^{2}(\R, du_2)\otimes L^{2}(\R_{+}, dv_2).$$
	
We introduce the map
	$$U_{1}= F\otimes I \otimes F \otimes I: L^2(\mathcal{D})\rightarrow L^2(\mathcal{D}),$$
	where $F$ is the Fourier transform acting on $L^{2}(\R)$ by the rule
	$$(Fg)(t)=\frac{1}{\sqrt{2 \pi}} \int_{-\infty}^{\infty} g(x) e^{-itx} dx.$$
	
	Consider now the following two mappings acting on $\Do$:
    \begin{align*}
    \psi_1: \Do & \rightarrow \Do \\
        \xi=(\xi_{1}, t_{2} +i   s_2) & \mapsto (\xi_1, t_{2}+ i\frac{s_{2}}{2|t_{2}|})=w
    \end{align*}	
	and
	\begin{align*}
    \psi_2: \Do & \rightarrow \Do \\
         z=(x_1+iy_1, z_2) & \mapsto \left(\sqrt{|x_{2}|}(x_{1}+y_{1})+ i \frac{1}{2\sqrt{|x_{2}|}}(-x_{1}+y_{1}), z_{2}\right) = \xi,
    \end{align*}	
	where we write $\xi_m=t_m+is_m$  and $z_m=x_m+iy_m$.
	Both functions $\psi_1$ and $\psi_2$ lead to the following unitary operators
	acting on $L^{2}(\Do)$:
	$$(V_1f)(\xi) = \frac{1}{(2|t_{2}|)^{ 1/2}} f(\psi_1(\xi)), \quad (V_{2}g)(z)= g(\psi_2(z)) .$$

	Henceforth  $L=(l_{1}, l_{2})=(j,k)$.
	
	\begin{thm}[\cite{R-SN-D}]\label{puro-iso}
		The operator $U=V_{2}V_{1}U_{1}U_{0}$ 
		is unitary, and maps
		$L^{2}(D_{2}, d\mu)$ onto the space
		$$L^{2}(\Do, d\eta)= L^{2}(\R,dx_1)\otimes L^{2}(\R,dy_1) \otimes L^{2}(\R,dx_2)\otimes L^{2}(\R_{+},dy_2).$$
		For each $L=(1,n) \in \N^{2}$, the operator $U$ restricted to $\mathcal{A}^{2}_{(1,n)}(D_{2})$ is an isometric isomorphism onto
		the space
		$$\mathcal{H}_{(1,n)}^{+}= L^{2}(\R) \otimes \C h_{0}  \otimes L^{2}(\R_{+}) \otimes \Span \{\ell_{0}, ...., \ell_{n-1} \}. $$ 
		For each $L=(n,1) \in \N^{2}$, the operator $U$ restricted to $\mathcal{A}^{2}_{(n,1)}(D_{2})$ is an isometric isomorphism onto
		the space 
		$$\mathcal{H}_{(n,1)}^{+}= L^{2}(\R) \otimes \Span \{h_{0},..., h_{n-1} \}  \otimes L^{2}(\R_{+}) \otimes \C\ell_{0}. $$ 
		Moreover, for each $L=(j,k) \in \N^{2}$, the operator $U$ restricted to the true-Bergman-type space $\mathcal{A}^{2}_{(L)}(D_{2}) = \mathcal{A}^{2}_{((j,k))}(D_{2})$ is an isometric isomorphism onto the space
		$$\mathcal{H}_{(L)}^{+}= L^{2}(\R) \otimes \C h_{j-1} \otimes L^{2}(\R_{+}) \otimes \C \ell_{k-1}. $$ 
	\end{thm}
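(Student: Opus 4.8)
Theorem \ref{puro-iso} is, for the true-poly-Bergman-type spaces, the case $n=2$, $\lambda=0$ of the construction of \cite{R-SN-D} recalled above; since here the intertwining operator is presented explicitly as $U=V_{2}V_{1}U_{1}U_{0}$, the task is to check directly that this $U$ does the job and to deduce the two poly-Bergman statements from the true-poly ones. I would organize the argument in three stages.

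\emph{Stage 1: unitarity.} Each factor is unitary on the relevant $L^{2}$-space. The map $\kappa$ is a diffeomorphism of $\Do$ onto $D_{2}$ with inverse $\zeta\mapsto(\zeta_{1},\zeta_{2}-i|\zeta_{1}|^{2})$ and real Jacobian identically $1$, so $U_{0}$ is unitary; $U_{1}$ is a tensor product of copies of the unitary Fourier transform $F$ and of the identity; and $\psi_{1},\psi_{2}$ are fibrewise bijections of $\Do$ whose Jacobians equal $1/(2|t_{2}|)$ and $1$ respectively — for $\psi_{2}$ one checks that the linear map $(x_{1},y_{1})\mapsto(\sqrt{|x_{2}|}(x_{1}+y_{1}),\tfrac{1}{2\sqrt{|x_{2}|}}(y_{1}-x_{1}))$ has determinant $1$ — so that the normalizing weights built into $V_{1}$ and $V_{2}$ make them isometric and onto. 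Composing, $U$ is unitary from $L^{2}(D_{2},d\mu)$ onto $L^{2}(\Do,d\eta)$.

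\emph{Stage 2: transport of the defining equations.} I would follow the two operators defining $\mathcal{A}^{2}_{L}(D_{2})$ through $U$. A chain-rule computation shows that pulling back by $\kappa$ carries $\partial/\partial\bar\zeta_{2}$ to $\partial/\partial\bar w_{2}$ and the twisted operator $\partial/\partial\bar\zeta_{1}-2i\zeta_{1}\,\partial/\partial\bar\zeta_{2}$ to $\partial/\partial\bar w_{1}-iw_{1}\,\partial/\partial u_{2}$ (writing $w_{m}=u_{m}+iv_{m}$). Applying the partial Fourier transforms in $U_{1}$ turns $\partial/\partial u_{2}$ into multiplication by a constant times the dual variable $x_{2}$, so that the second equation becomes the parametrized ODE $(\partial/\partial v_{2}+x_{2})^{l_{2}}=0$ in $v_{2}$; and, after also Fourier-transforming in $u_{1}$ and performing the fibrewise-linear change $\psi_{2}$ — the factor $\sqrt{|x_{2}|}$ in $\psi_{2}$ being exactly the required scaling — the first operator becomes $i\sqrt{2x_{2}}\cdot\tfrac{1}{\sqrt2}(y_{1}+\partial/\partial y_{1})$, i.e.\ a nowhere-vanishing multiple of the harmonic-oscillator annihilation operator in the variable $y_{1}$. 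The rescaling of $v_{2}$ contained in $\psi_{1}$ normalizes the exponential that will appear in the $v_{2}$-equation to $e^{-y_{2}/2}$, while none of $\psi_{1},\psi_{2}$ affects the variables $x_{1},x_{2},y_{2}$.

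\emph{Stage 3: solving the ODEs and assembling.} The equation $(\partial/\partial v_{2}+x_{2})^{l_{2}}G=0$ forces $G(x_{2},\cdot)$ to be a polynomial in $v_{2}$ of degree $<l_{2}$ times $e^{-x_{2}v_{2}}$, which lies in $L^{2}(\R_{+},dv_{2})$ precisely when $x_{2}>0$; after the normalization this space becomes $\Span\{\ell_{0},\dots,\ell_{l_{2}-1}\}$, because $L_{0},\dots,L_{l_{2}-1}$ span the same polynomials as $1,y_{2},\dots,y_{2}^{l_{2}-1}$. Dually, the $l_{1}$-th power of the annihilation operator has $L^{2}(\R,dy_{1})$-kernel $\Span\{h_{0},\dots,h_{l_{1}-1}\}$, since $h_{0}$ spans its kernel and each application lowers the Hermite index by one; the variable $x_{1}$ is left free, contributing a factor $L^{2}(\R)$, and the integrability constraint $x_{2}>0$ contributes $L^{2}(\R_{+})$. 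Hence $U$ maps $\mathcal{A}^{2}_{L}(D_{2})$ isometrically onto $L^{2}(\R)\otimes\Span\{h_{0},\dots,h_{l_{1}-1}\}\otimes L^{2}(\R_{+})\otimes\Span\{\ell_{0},\dots,\ell_{l_{2}-1}\}$. Specializing $L=(1,n)$ and $L=(n,1)$ yields the two displayed spaces, and since $\mathcal{A}^{2}_{((j,k))}(D_{2})$ is by definition the orthogonal complement in $\mathcal{A}^{2}_{(j,k)}(D_{2})$ of $\mathcal{A}^{2}_{(j-1,k)}(D_{2})+\mathcal{A}^{2}_{(j,k-1)}(D_{2})$ — which under $U$ corresponds, inside the finite-dimensional $h$-$\ell$ tensor factor, to the span of all $h_{a}\otimes\ell_{b}$ with $(a,b)\neq(j-1,k-1)$ — its image is $L^{2}(\R)\otimes\C h_{j-1}\otimes L^{2}(\R_{+})\otimes\C\ell_{k-1}$. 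The main obstacle is Stage 2: getting the chain rule for the twisted operator and the interplay of the two Fourier transforms with $\psi_{1},\psi_{2}$ exactly right, down to signs and constants; a secondary but real point in Stage 3 is surjectivity, which requires genuinely solving the two ODEs rather than merely noting that solutions lie in the claimed spans. As a consistency check, summing the spaces $\mathcal{H}^{+}_{(L)}$ over all $L$ together with their complex conjugates must recover all of $L^{2}(\Do,d\eta)$, and this holds exactly because $\{h_{m}\}$ and $\{\ell_{m}\}$ are complete orthonormal systems.
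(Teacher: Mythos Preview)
The paper does not prove this theorem: it is quoted verbatim from \cite{R-SN-D} and stated without proof, as part of the preliminary material recalled in Section~\ref{identificacion}. Your three-stage outline---checking unitarity factor by factor, conjugating the defining differential operators through $U$, and then identifying the $L^2$-kernels of the resulting ordinary differential operators with spans of Hermite and Laguerre functions---is precisely the strategy of \cite{R-SN-D}, so your approach matches the original source rather than this paper.

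One small comment: your deduction of the poly-Bergman images $\mathcal{H}^+_{(1,n)}$ and $\mathcal{H}^+_{(n,1)}$ from the general image of $\mathcal{A}^2_{L}(D_2)$ is the cleanest route, and your identification of the true-poly image $\mathcal{H}^+_{(L)}$ as the orthogonal complement inside the finite-dimensional tensor factor is exactly right. The cautions you flag in Stage~2 (signs and constants in the chain rule for the twisted operator, and the precise interaction of the Fourier transforms with $\psi_1,\psi_2$) are indeed the places where the computation is delicate in the original paper; the outline is correct but those details do require care to execute.
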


	\vspace{.3cm}
	We introduce the isometric linear embedding
	$R_{0(L)}: L^{2}(\R \times \R_{+}) \longrightarrow L^{2}(\Do)$
	defined  by
	\begin{equation*}
		(R_{0(L)}g)(x_1,y_1,x_2,y_2)= \chi_{\R_{+}}(x_2)\, g(x_{1}, x_{2}) \, h_{j-1}(y_{1}) \ell_{k-1}(y_{2}).
	\end{equation*}
	Of course $\mathcal{H}_{(L)}^{+}$ is the image of $R_{0(L)}$, and it is also
	the image of $\mathcal{A}^{2}_{(L)}(D_{2})$ under $U$. Thus, the operator
	$$R_{(L)}=R_{0(L)}^{*}U: L^{2}(D_{2}) \longrightarrow L^{2}(\R \times \R_{+}),$$
	unitarily maps the true-poly-Bergman-type space $\mathcal{A}^{2}_{(L)}(D_{2})$ onto  $L^{2}(\R \times \R_{+})$.
	Therefore, $R_{(L)}R_{(L)}^{*}=I$ and $R_{(L)}^{*}R_{(L)}=B_{(L)}$, where
	$B_{(L)}$ is the orthogonal projection from $L^{2}(D_{2})$ onto $\mathcal{A}^{2}_{(L)}(D_{2})$.
	In addition, the operator $R_{(L)}^{*}=U^*R_{0(L)}$ plays the role of the Segal-Bargmann transform 
	for the true-poly-Bergman-type space $\mathcal{A}^{2}_{(L)}(D_{2})$,
	where the adjoint operator
	$R_{0(L)}^{*}: L^{2}(\Do) \longrightarrow L^{2}(\R \times \R_{+})$
	is given by
	\begin{equation*}
	(R_{0(L)}^*f)(x_{1}, x_{2})
	= 
	\int_{\R}\int_{\R_{+}} h_{j-1}(y_{1}) \ell_{k-1}(y_{2}) f(x_{1},y_{1}, x_{2}, y_{2})  dy_{2} dy_{1}
	\end{equation*}
	for all $(x_1,x_2)\in \mathbb{R} \times \mathbb{R}_+$.

Similarly,  we introduce the following isometric linear embeddings
$$R_{0(1,n)},\ R_{0(n,1)} : (L^{2}(\R \times \R_{+}))^{n} \longrightarrow L^{2}(\Do)$$
defined by 
\begin{align*}
	(R_{0(1,n)}g)(x_1,y_1,x_2,y_2) & =  \chi_{\R_{+}}(x_2)\, \, h_0(y_{1}) [N(y_2)]^T g(x_1,x_2) \\
	(R_{0(n,1)}g)(x_1,y_1,x_2,y_2) & = \chi_{\R_{+}}(x_2)\, \ell_{0}(y_{2}) [H(y_1)]^T g(x_1,x_2),
\end{align*}
where $g=(g_1,...., g_n)^T \in(L^{2}(\R \times \R_{+}))^{n}$ and 
$$H(y_1)=(h_{0}(y_1),...,h_{n-1}(y_1))^T \text{ and }
N(y_2)=(\ell_{0}(y_2),...,\ell_{n-1}(y_2)^T.$$  
Clearly, $\mathcal{H}_{(1,n)}^{+}$ and  $\mathcal{H}_{(n, 1)}^{+}$ are the ranges of
$R_{0(1,n)}$ and $R_{0(n,1)}$ respectively;  they are also
the images of $\mathcal{A}^{2}_{(1,n)}(D_{2})$ and $\mathcal{A}^{2}_{(n,1)}(D_{2})$ under $U$.
Consequently, the operators
$$R_{(1,n)}=R_{0(1,n)}^*U,\quad  R_{(n,1)}=R_{0(n,1)}^*U: L^{2}(D_{2}) \longrightarrow 
(L^{2}(\R \times \R_{+}))^n,$$
isometrically map the poly-Bergman-type spaces $\mathcal{A}^{2}_{(1,n)}(D_{2})$ and $\mathcal{A}^{2}_{(n,1)}(D_{2})$ onto  $(L^{2}(\R \times \R_{+}))^n$.
Therefore, $R_{(1,n)}R_{(1,n)}^{*}=I$, $R_{(1,n)}^{*}R_{(1,n)}=B_{(1,n)}$,
$R_{(n,1)}R_{(n,1)}^{*}=I$, and $R_{(n,1)}^{*}R_{(n,1)}=B_{(n,1)}$, where
$B_{(1,n)}$ and $B_{(n,1)}$ are the orthogonal projections from $L^{2}(D_{2})$ onto $\mathcal{A}^{2}_{(1,n)}(D_{2})$ and  $\mathcal{A}^{2}_{(n,1)}(D_{2})$, respectively.
In addition, the operators $R_{(1,n)}^{*}=U^*R_{0(1,n)}$  and $R_{(n,1)}^{*}=U^*R_{0(n,1)}$ play the role of the Segal-Bargmann transform for the poly-Bergman-type spaces $\mathcal{A}^{2}_{(1,n)}(D_{2})$ and  $\mathcal{A}^{2}_{(n,1)}(D_{2})$,
where the adjoint operators
$R_{0(1,n)}^{*},\ R_{0(n,1)}^{*}: L^{2}(\Do) \longrightarrow (L^{2}(\R \times \R_{+}))^{n}$
are given by

\begin{align*}
	(R_{0(1,n)}^*f)(x_{1}, x_{2})
	& = 
	\int_{\R}\int_{\R_{+}} f(x_{1},y_{1}, x_{2}, y_{2}) h_{0}(y_{1}) N(y_2) dy_{2} dy_{1} \\
	(R_{0(n,1)}^*f)(x_{1}, x_{2})
	& = 
	\int_{\R}\int_{\R_{+}}  f(x_{1},y_{1}, x_{2}, y_{2}) \ell_{0}(y_{2}) H(y_1) dy_{2} dy_{1},
\end{align*}
where $(x_1,x_2)\in \mathbb{R} \times \mathbb{R}_+$.\\

	
	\section{Toeplitz operators on the poly-Bergman space  $\mathcal{A}^{2}_{(1,n)}(D_{2})$.} \label{OT}
	
	In this section we study Toeplitz operators with nilpotent symbols acting on the poly-Bergman-type space
	$\mathcal{A}^{2}_{(1,n)}(D_{2})$. In \cite{LibroVasilevski} the author thoroughly developed the theory of Toeplitz operators
	on Bergman spaces, and the author's techniques can be applied to the study of Toeplitz operators
	acting on $\mathcal{A}^{2}_{(1,n)}(D_{2})$. To begin with, a function 
	$\tilde{c} \in L^{\infty}(D_{2}, d\mu)$ is said to be  a nilpotent symbol if it has the form 
	$\tilde{c}(\zeta_{1}, \zeta_{2})=c(\im \zeta_{1},\im \zeta_{2}- |\zeta_{1}|^{2})$, where $c:\R \times \R_+\rightarrow \C$. 
	Then the Toeplitz operator acting on $\mathcal{A}^{2}_{(1,n)}(D_{2})$
	with  nilpotent symbol $\tilde{c}$ is defined by
	$$(T_{c}f)(\zeta)=(B_{(1,n)}(\tilde{c} f))(\zeta),$$
	where $B_{(1,n)}$ is the orthogonal projection from $L^{2}(D_{2})$ onto $\mathcal{A}^{2}_{(1,n)}(D_{2})$.  If we define
	\begin{align*}
	M_f:L^2(D_2)& \rightarrow L^2(D_2) \\
	   g & \mapsto fg,
	\end{align*}
	then $T_{c} = B_{(1,n)}M_{\tilde{c}}$.
	
	The  Bargmann-type operator  $R_{(1,n)}$ identifies the space $\mathcal{A}^{2}_{(1,n)}(D_{2})$ with
	$(L^{2}(\R\times \R_{+}))^n$, and it fits properly in the study of the Toeplitz operators $T_{\widetilde{c}}$.

	\begin{thm}\label{equiv-unitaria}
		Let $\tilde{c}$ be a nilpotent symbol.
		Then the Toeplitz operator $T_{c}$ is unitary equivalent to the multiplication operator $M_{\gamma^c}$, and in fact, $M_{\gamma^c}=R_{(1,n)}T_{c}R_{(1,n)}^*$,  
		where the matrix-valued function $\gamma^{c}:\R \times \R_{+} \rightarrow M_n(\C)$ is given by
		\begin{equation}\label{gamma-c}
\gamma^{c}(x_1,x_2)=
\int_{\R}\int_{\R_{+}} c\left(\frac{-x_{1}+y_{1}}{2\sqrt{x_{2}}} ,\frac{y_{2}}{2x_{2}}\right) (h_0(y_{1}))^{2}
N(y_{2})[N(y_{2})]^T dy_{2}dy_{1}.
		\end{equation}
	\end{thm}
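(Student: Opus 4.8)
The plan is to strip off the Bergman projection using the intertwining identities for $R_{(1,n)}$, then to conjugate the multiplication operator $M_{\tilde c}$ successively through the four elementary unitaries $U_0,U_1,V_1,V_2$ composing $U=V_2V_1U_1U_0$, and finally to contract the resulting multiplication operator against the embedding $R_{0(1,n)}$ and its adjoint. First I would note that, since $R_{(1,n)}^*R_{(1,n)}=B_{(1,n)}$ and $R_{(1,n)}R_{(1,n)}^*=I$, one has $R_{(1,n)}B_{(1,n)}=R_{(1,n)}R_{(1,n)}^*R_{(1,n)}=R_{(1,n)}$, so that
$R_{(1,n)}T_cR_{(1,n)}^*=R_{(1,n)}B_{(1,n)}M_{\tilde c}R_{(1,n)}^*=R_{(1,n)}M_{\tilde c}R_{(1,n)}^*$. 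Writing $R_{(1,n)}=R_{0(1,n)}^*U$ and $R_{(1,n)}^*=U^*R_{0(1,n)}$, this equals $R_{0(1,n)}^*\,(U M_{\tilde c}U^*)\,R_{0(1,n)}$, so it suffices to identify $UM_{\tilde c}U^*$ as a multiplication operator on $L^2(\Do,d\eta)$ and then evaluate this sandwich.

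The second step is to compute $UM_{\tilde c}U^*=V_2V_1U_1\,(U_0M_{\tilde c}U_0^*)\,U_1^*V_1^*V_2^*$ one factor at a time, using two elementary principles: for a composition-type unitary $(Vf)(z)=J(z)^{1/2}f(\psi(z))$ one has $VM_aV^*=M_{a\circ\psi}$, whereas a partial Fourier transform commutes with multiplication by any function not depending on the variables being transformed. Concretely, $U_0M_{\tilde c}U_0^*=M_{\tilde c\circ\kappa}$, and since $\kappa(w_1,w_2)=(w_1,w_2+i|w_1|^2)$ one checks directly that $\tilde c(\kappa(w))=c(\im w_1,\im w_2+|w_1|^2-|w_1|^2)=c(\im w_1,\im w_2)$, i.e. a function of the imaginary parts $(v_1,v_2)$ alone; hence $U_1=F\otimes I\otimes F\otimes I$, which acts only on the real parts $u_1,u_2$, leaves $M_{c(v_1,v_2)}$ fixed. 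Then $V_1$ substitutes $\psi_1$, replacing the second imaginary variable $s_2$ by $s_2/(2|t_2|)$ and keeping the others, and $V_2$ substitutes $\psi_2$; composing these substitutions yields that $UM_{\tilde c}U^*$ is multiplication by $c\!\left(\tfrac{-x_1+y_1}{2\sqrt{|x_2|}},\tfrac{y_2}{2|x_2|}\right)$ on $L^2(\Do,d\eta)$.

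The third step is the contraction. Because the range of $R_{0(1,n)}$ is supported on $x_2\in\R_+$ (the factor $\chi_{\R_+}(x_2)$), one may replace $|x_2|$ by $x_2$ there. Applying $R_{0(1,n)}$, then the multiplication operator just found, then $R_{0(1,n)}^*$, and using $(R_{0(1,n)}g)(x_1,y_1,x_2,y_2)=\chi_{\R_+}(x_2)h_0(y_1)[N(y_2)]^Tg(x_1,x_2)$ together with the integral formula for $R_{0(1,n)}^*$, the value $g(x_1,x_2)$ pulls out of the $(y_1,y_2)$–integration (Fubini, legitimate since $\tilde c\in L^\infty$ and $h_0,\ell_k\in L^2$), and the leftover factors $h_0(y_1)^2$ and $N(y_2)[N(y_2)]^T$ assemble precisely into the matrix $\gamma^c(x_1,x_2)$ of \eqref{gamma-c}; boundedness of $\gamma^c$, hence of $M_{\gamma^c}$, follows at once from $\|\tilde c\|_\infty<\infty$ and normalization of the Hermite/Laguerre functions. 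This gives $R_{(1,n)}T_cR_{(1,n)}^*=M_{\gamma^c}$ and, since $R_{(1,n)}$ restricts to a unitary from $\mathcal{A}^2_{(1,n)}(D_2)$ onto $(L^2(\R\times\R_+))^n$, the asserted unitary equivalence.

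I expect the main obstacle to be the bookkeeping in the second step: one must keep careful track of the successive renamings of coordinates $(u,v)\mapsto(t,s)\mapsto(x,y)$ through $U_0,U_1,V_1,V_2$, of the Jacobian weights, and of the absolute values $|t_2|,|x_2|$, and in particular must justify cleanly that $U_1$ genuinely commutes with $M_{c(v_1,v_2)}$ — which is exactly where the nilpotent form of $\tilde c$ and the identity $\tilde c\circ\kappa=c(\im w_1,\im w_2)$ are used. Everything after that (the change of variables induced by $\psi_1,\psi_2$, Fubini, and the linear-algebra identity $N(y_2)\big([N(y_2)]^Tg\big)=\big(N(y_2)[N(y_2)]^T\big)g$) is routine.
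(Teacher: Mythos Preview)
Your proposal is correct and follows essentially the same route as the paper's proof: strip off $B_{(1,n)}$ via $R_{(1,n)}B_{(1,n)}=R_{(1,n)}$, conjugate $M_{\tilde c}$ through $U_0,U_1,V_1,V_2$ to obtain multiplication by $c\bigl(\tfrac{-x_1+y_1}{2\sqrt{|x_2|}},\tfrac{y_2}{2|x_2|}\bigr)$, and then sandwich between $R_{0(1,n)}^*$ and $R_{0(1,n)}$. If anything, you supply a bit more justification than the paper does (the explicit reason $U_1$ commutes with $M_{\tilde c\circ\kappa}$, the role of $\chi_{\R_+}(x_2)$ in replacing $|x_2|$ by $x_2$, and the Fubini remark).
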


	\begin{proof}
		We have
			\begin{eqnarray*}
			R_{(1,n)}T_{c}R_{(1,n)}^*
			&=& R_{(1,n)}B_{(1,n)} M_{\tilde{c}} R_{(1,n)}^*\\
			&=& R_{(1,n)}R_{(1,n)}^{*}R_{(1,n)} M_{\tilde{c}} R_{(1,n)}^*\\
			&=&R_{(1,n)} M_{\tilde{c}} R_{(1,n)}^*\\
			&=&R_{0(1,n)}^{*}V_{2}V_{1}U_{1}U_{0} (M_{\tilde{c}}) U_0^{-1}U_1^{-1}V_{1}^{-1}V_{2}^{-1} R_{0(1,n)}.
		\end{eqnarray*}
	
	Recall that  $\zeta=\kappa(w)=(w_{1}, w_2 + i |w_{1}|^{2})$, where $w=(w_{1}, w_2) \in \Do$ and $w_m=u_m +iv_m$.
		For $f \in L^{2}(\Do)$,
		$$(U_{0} M_{\tilde{c}} U_{0}^{-1}f)(w)= \tilde{c}(\kappa(w))(U_{0}^{-1}f)(\kappa(w))= \tilde{c}(\kappa(w))f(w).$$
		That is, $U_{0} M_{\tilde{c}} U_{0}^{-1}=M_{\tilde{c}\circ\kappa}$.
		It is easy to see that
		$U_{1} M_{\tilde{c}\circ\kappa} U_{1}^{-1}= M_{\tilde{c}\circ\kappa}.$  Furthermore,
		$$V_{1}M_{\tilde{c}\circ\kappa}V_{1}^{-1}= M_{\tilde{c}\circ\kappa\circ\psi_1}$$ 
		and
		\[
		   V_{2}V_1M_{\tilde{c}\circ\kappa} V_1^{-1} V_{2}^{-1} = M_{\tilde{c}\circ\kappa\circ\psi_1\circ\psi_2}.
		\]
		Thus
		$$M_{\tilde{c}\circ\kappa\circ\psi_1\circ\psi_2}f(\xi) = c\left( \frac{-x_{1}+y_{1}}{2\sqrt{|x_{2}|}} ,\frac{y_{2}}{2|x_{2}|}\right)  f(\xi),$$
		where $\xi = (x_1 +iy_2, x_2 +iy_2)\in \Do$.
		Finally, let $g=(g_1,..., g_n)^T \in(L^{2}(\R \times \R_{+}))^{n}$ and $A=(R_{(1,n)}T_cR_{(1,n)}^* g)(x_1, x_2)$.
		Then
			\begin{eqnarray*}
			A&=& (R_{0(1,n)}^{*}c\left( \frac{-x_{1}+y_{1}}{2\sqrt{|x_{2}|}}  ,\frac{y_{2}}{2|x_{2}|}\right) R_{0(1,n)}g)(x_1, x_2)\\
			&=&	\int_{\R}\int_{\R_{+}} c\left( \frac{-x_{1}+y_{1}}{2\sqrt{|x_{2}|}}  ,\frac{y_{2}}{2|x_{2}|}\right) (R_{0(1,n)}g)(x_{1},y_{1}, x_{2}, y_{2}) h_{0}(y_{1})  N(y_2) dy_{2} dy_{1}\\
			&=& \int_{\R}\int_{\R_{+}} c\left( \frac{-x_{1}+y_{1}}{2\sqrt{|x_{2}|}}  ,\frac{y_{2}}{2|x_{2}|}\right) h_0(y_{1}) [N(y_2)]^T g(x_1,x_2) h_{0}(y_{1}) N(y_2) dy_{2} dy_{1}\\
			&=&  \int_{\R}\int_{\R_{+}}  c\left( \frac{-x_{1}+y_{1}}{2\sqrt{|x_{2}|}}  ,\frac{y_{2}}{2|x_{2}|}\right)  
			(h_{0}(y_{1}))^2 N(y_2)  [N(y_2)]^T g(x_1,x_2) dy_{2} dy_{1}.
		\end{eqnarray*}
		Thus
		$R_{(1,n)}T_{c}R_{(1,n)}^*= M_{\gamma^{c}}$, 
		where $\gamma^{c}(x_1,x_2)$ is given in (\ref{gamma-c}).
	\end{proof}

Studying the full $C^*$-algebra generated by all Toeplitz operators with nilpotent symbols is a difficult task due to the fact that its spectrum is too large.  For this reason we study Toeplitz operators in special cases. In particular, we consider two specific cases of nilpotent symbols. 
First, we study the Toeplitz operators with symbols of the form
$\tilde{b}(\zeta) = b(\im \zeta_{2}- |\zeta_{1}|^{2})$, for which
\begin{equation}\label{sim-b-1}
	\gamma^{b}(x_{1}, x_{2})= \gamma^{b}( x_{2})= \int_{\R_{+}} b\left(\frac{y_{2}}{2x_{2}}\right) N(y_2)  [N(y_2)]^T  dy_{2}.
\end{equation}
	Secondly, we analyze Toeplitz operators with symbols of the form	$\tilde{a}(\zeta) = a(\im \zeta_{1})$,
	for which
\begin{equation}\label{sim-a-2}
	\gamma^{a}(x_{1}, x_{2}) = \int_{\R} a\left( \frac{-x_{1}+y_{1}}{2\sqrt{x_{2}}} \right) 
	(h_0(y_{1}))^{2} dy_{1} I_{n\times n}.\\
\end{equation}

	
	\subsection{Toeplitz operators with symbols $\tilde{b}(\zeta) = b(\im \zeta_{2}- |\zeta_{1}|^{2})$.}\label{seccion-sb}
	
	In this section, we study the $C^*$-algebra generated by all Toeplitz operators $T_b$ with symbols of the form $\tilde{b}(\zeta) = b(\im \zeta_{2}- |\zeta_{1}|^{2})$, where $b:\R_+\rightarrow \C$ has limits at $0$ and $+\infty$.
Under this continuity condition,  we will see that $\gamma^{b}$  is continuous on 
	$\overline{\Pi}:=\overline{\R} \times \overline{\R}_+$, where  
	$\overline{\R}=[-\infty, +\infty]$ and $\overline{\R}_+= [0, +\infty]$ are the two-point compactification of $\R=(-\infty, +\infty)$ and $\R_+=(0,+\infty)$, respectively. The spectral function $\gamma^{b}=(\gamma^b_{jk}):\R\times\R_+\rightarrow M_n(\C)$
	is continuous if all of its matrix entries are continuous.  These matrix entries are given by	
	\begin{equation}\label{sim-b-2}
		\gamma_{jk}^{b}(x_{1}, x_{2}) = \int_{\R_{+}} b\left(\frac{y_{2}}{2x_{2}}\right)
		\ell_{j-1}(y_{2})\ell_{k-1}(y_{2})  dy_{2} , \quad j,k=1,..., n.\\
	\end{equation} 

	Apply the change of variable $y_2 \mapsto 2x_2y_2$ in the integral representation of $\gamma_{jk}^{b}$ .
	Then
	$$\gamma_{jk}^{b}(x_{1}, x_{2})=\gamma_{jk}^{b}(x_{2})= 2x_{2} \int_{\R_{+}} b(y_{2})   \ell_{j-1} (2x_{2}y_{2})\ell_{k-1} (2x_{2}y_{2}) dy_{2}.$$
	We sometimes think of $\gamma_{jk}^{b}$ as a function from $\R_+$ to $\C$, as it depends only  on the variable $x_2$, and is continuous on $\R_+$ because of the
	continuity of  $\ell_{j-1}(y)\ell_{k-1}(y)$  and the Lebesgue dominated convergence theorem.

	Let $L^{\infty}_{\{0, +\infty\}}(\R_{+})$ denote the subspace of $L^{\infty}(\R_{+})$ consisting of all functions having limit values at
 $0$ and $+\infty$.
	For $b \in L^{\infty}_{\{0, +\infty\}}(\R_{+})$, define
	\begin{equation*}
		b_{0}:=\lim_{y\rightarrow 0^{+}} b(y), \quad b_{\infty}:=\lim_{y\rightarrow +\infty} b(y).
	\end{equation*}

The form of the matrix-valued function $\gamma^b$ was obtained in \cite{R-SN} as the spectral function of a Toeplitz operator
acting on  poly-Bergman spaces of the upper half-plane with vertical symbols, i.e., symbols depending only on the imaginary part of $z$.  Thus, we have at least two scenarios in which $\gamma^b$ appears
as a spectral matrix-valued function.

\begin{lem}[\cite{R-SN}]\label{lim-sym-b}
	Let $b \in L^{\infty}_{\{0, +\infty\}}(\R_{+})$. 
	Then the spectral matrix-valued function $\gamma^{b}:\R_+\rightarrow M_n(\C)$ satisfies
	$$b_{\infty} I =\lim_{x_{2}\rightarrow 0^{+}} \gamma^{b}(x_{2}), \quad b_{0} I =\lim_{x_{2}\rightarrow + \infty} \gamma^{b}(x_{2}).$$
\end{lem}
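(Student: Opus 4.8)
The plan is to compute the two limits directly from the scaled integral representation
$$\gamma_{jk}^{b}(x_2)= 2x_2 \int_{\R_{+}} b(y)\,\ell_{j-1}(2x_2 y)\,\ell_{k-1}(2x_2 y)\,dy,$$
treating the cases $x_2\to 0^+$ and $x_2\to+\infty$ separately and handling each entry $(j,k)$.
For the limit as $x_2\to+\infty$, I would substitute $t=2x_2 y$ to rewrite the entry as
$\int_{\R_+} b\!\left(\tfrac{t}{2x_2}\right)\ell_{j-1}(t)\,\ell_{k-1}(t)\,dt$.
As $x_2\to+\infty$ the argument $t/(2x_2)\to 0^+$ pointwise for each fixed $t$, so $b(t/(2x_2))\to b_0$;
since $b$ is bounded and $\ell_{j-1}\ell_{k-1}\in L^1(\R_+)$ (it is a Laguerre function, hence Schwartz-type decay times a polynomial), the dominated convergence theorem gives
$\lim_{x_2\to+\infty}\gamma_{jk}^{b}(x_2)=b_0\int_{\R_+}\ell_{j-1}(t)\ell_{k-1}(t)\,dt=b_0\,\delta_{jk}$,
using orthonormality of $\{\ell_m\}$ in $L^2(\R_+)$. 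This yields $\lim_{x_2\to+\infty}\gamma^{b}(x_2)=b_0 I$.

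For the limit as $x_2\to 0^+$, I would keep the original variable and instead substitute $t=2x_2 y$ again but now observe that $t/(2x_2)=y\to+\infty$ is not what happens; rather, with the substitution $t = 2x_2 y$ the entry equals $\int_{\R_+} b(t/(2x_2))\,\ell_{j-1}(t)\ell_{k-1}(t)\,dt$, and as $x_2\to 0^+$ we have $t/(2x_2)\to+\infty$ for each fixed $t>0$, so $b(t/(2x_2))\to b_\infty$. Again dominated convergence (same integrable dominating function $\|b\|_\infty|\ell_{j-1}\ell_{k-1}|$) gives $\lim_{x_2\to 0^+}\gamma_{jk}^{b}(x_2)=b_\infty\,\delta_{jk}$, hence $\lim_{x_2\to 0^+}\gamma^{b}(x_2)=b_\infty I$. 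Note the roles of $b_0$ and $b_\infty$ are swapped relative to naive expectation precisely because the scaling inside $b$ is $y_2/(2x_2)$, so small $x_2$ probes large arguments of $b$ and vice versa; it is worth stating this explicitly to avoid sign/label confusion.

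The only genuine subtlety — and the step I expect to need the most care — is justifying the dominated convergence application uniformly in the entry indices and confirming that a single integrable dominating function works: here $|b(t/(2x_2))\,\ell_{j-1}(t)\ell_{k-1}(t)| \le \|b\|_{L^\infty}\,|\ell_{j-1}(t)|\,|\ell_{k-1}(t)|$, and the right-hand side is in $L^1(\R_+)$ by Cauchy–Schwarz since each $\ell_{m}\in L^2(\R_+)$. One should also note that the pointwise limit $b(t/(2x_2))\to b_0$ (resp.\ $b_\infty$) holds for almost every $t$ — in fact every $t>0$ — using only the existence of the one-sided limits of $b$, with no continuity of $b$ needed on the interior. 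Everything else is routine: the computation is entry-wise, the limits assemble into scalar multiples of the identity, and no operator-theoretic input beyond the already-established integral formula \eqref{sim-b-2} is required.
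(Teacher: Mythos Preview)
Your argument is correct: reverting to the representation $\gamma_{jk}^{b}(x_2)=\int_{\R_+} b(t/(2x_2))\,\ell_{j-1}(t)\ell_{k-1}(t)\,dt$ (which is exactly \eqref{sim-b-2}) and applying dominated convergence with the $x_2$-independent majorant $\|b\|_\infty\,|\ell_{j-1}\ell_{k-1}|\in L^1(\R_+)$ gives both limits, and orthonormality of the Laguerre functions produces the $\delta_{jk}$. Note that the paper does not supply its own proof of this lemma---it is quoted from \cite{R-SN}---so there is no in-paper argument to compare against; your dominated-convergence approach is the natural one and is essentially what one expects the cited reference to do.
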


\vspace{.3cm}
Let $M_n(C([0, \infty]))=M_n(\C)\otimes C([0, \infty])$, where $M_n(\C)$ is the algebra of all $n \times n$ matrices with complex entries. 
Let $\mathfrak{C}$ be the $C^*$-subalgebra of $M_n(C([0, \infty]))$ given by
$$\mathfrak{C}= \{M \in  M_n(C([0, \infty])) : M(0), M(+\infty) \in \C I\}.$$

By Lemma \ref{lim-sym-b} and Theorem 4.8 in \cite{R-SN}, we have the following 

\begin{thm}
For all $b\in L^{\infty}_{\{0, +\infty\}}(\R_{+})$, the spectral matrix-valued function $\gamma^{b} $ belongs to the $C^*$-algebra $\mathfrak{C}$. Moreover, the $C^{*}$-algebra generated by all the matrix-valued functions $\gamma^{b}:\overline{\Pi} \rightarrow M_n(\C)$, with $b\in L^{\infty}_{\{0, +\infty\}}(\R_{+})$,  is  equal to $\mathfrak{C}$. That is, the  $C^*$-algebra generated by all the Toeplitz operators $T_{b}$,
	with $b \in L^{\infty}_{\{0, +\infty\}}(\R_{+})$,
	 is isomorphic to $\mathfrak{C}$, where the  isomorphism is defined on the generators by 
	$$T_{b}  \longmapsto \gamma^b.$$
\end{thm}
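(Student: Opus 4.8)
The plan is to prove the two assertions in order — first that $\gamma^{b} \in \mathfrak{C}$ for every admissible $b$, then that the $\gamma^{b}$'s generate all of $\mathfrak{C}$ — and finally to transport the function-algebra statement back to the Toeplitz operators via the unitary equivalence of Theorem~\ref{equiv-unitaria}.

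For the first step, recall from~(\ref{sim-b-2}) that each entry $\gamma^{b}_{jk}$ is continuous on $\R_{+}$ (boundedness of $b$, continuity of the products $\ell_{j-1}\ell_{k-1}$, and dominated convergence), so $\gamma^{b}$ is a continuous $M_n(\C)$-valued function on $\R_{+}$, independent of $x_1$; we view it as a function on $\overline{\R}_{+}=[0,+\infty]$. Lemma~\ref{lim-sym-b} supplies the one-sided limits $\lim_{x_2 \to 0^+}\gamma^{b}(x_2) = b_{\infty}I$ and $\lim_{x_2 \to +\infty}\gamma^{b}(x_2) = b_{0}I$, so $\gamma^{b}$ extends continuously to $[0,+\infty]$ with scalar values at both endpoints, which is exactly the defining condition of $\mathfrak{C}$. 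Since $\mathfrak{C}$ is norm-closed and closed under adjoints, the $C^{*}$-algebra generated by $\{\gamma^{b} : b\in L^{\infty}_{\{0,+\infty\}}(\R_{+})\}$ is automatically contained in $\mathfrak{C}$.

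The substantive direction is the reverse inclusion $\mathfrak{C} \subseteq C^{*}(\{\gamma^{b}\})$. Here I would invoke that, after the substitution $y_2 \mapsto 2x_2 y_2$ recorded before~(\ref{sim-b-2}), the matrix function $\gamma^{b}$ is literally the spectral function of the Toeplitz operator with vertical symbol $b$ on the poly-Bergman space $\mathcal{A}^{2}_{n}(\Pi)$, so Theorem~4.8 of~\cite{R-SN} applies word for word and asserts precisely that this $C^{*}$-algebra of matrix functions equals $\mathfrak{C}$. If one prefers a self-contained argument, the mechanism is a fiberwise (non-commutative Stone–Weierstrass) one: the constant symbol $b\equiv 1$ yields $\gamma^{1}=I$ by orthonormality of $\{\ell_m\}$; suitably chosen symbols (e.g.\ indicators, whose $\gamma^{b}$ are partial integrals of $\ell_{j-1}\ell_{k-1}$) show that the evaluations at interior points $t\in(0,+\infty)$ exhaust $M_n(\C)$ and separate those points, while Lemma~\ref{lim-sym-b} pins the two boundary fibers to $\C I$; together these force the generated algebra to be all of $\mathfrak{C}$. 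I expect this surjectivity onto $\mathfrak{C}$ to be the main obstacle — it is exactly the part that is imported from~\cite{R-SN}, the first inclusion and the continuity statement being routine.

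It remains to pass to operators. Restricted to $\mathcal{A}^{2}_{(1,n)}(D_{2})$ the map $R_{(1,n)}$ is a unitary onto $(L^{2}(\R\times\R_{+}))^{n}$ (since $R_{(1,n)}R_{(1,n)}^{*}=I$ and $R_{(1,n)}^{*}R_{(1,n)}=B_{(1,n)}$), so conjugation $A \mapsto R_{(1,n)}AR_{(1,n)}^{*}$ is a $*$-isomorphism between the bounded operators on $\mathcal{A}^{2}_{(1,n)}(D_{2})$ and those on $(L^{2}(\R\times\R_{+}))^{n}$, and by Theorem~\ref{equiv-unitaria} it carries $T_{b}$ to $M_{\gamma^{b}}$. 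Moreover $\gamma \mapsto M_{\gamma}$ is an isometric $*$-isomorphism of the function algebra $\mathfrak{C}$ onto the corresponding algebra of multiplication operators, because $\|M_{\gamma}\|=\|\gamma\|_{\infty}$ and the map is plainly multiplicative and adjoint-preserving. Composing the two identifications produces a $*$-isomorphism from the $C^{*}$-algebra generated by $\{T_{b} : b\in L^{\infty}_{\{0,+\infty\}}(\R_{+})\}$ onto $C^{*}(\{\gamma^{b}\}) = \mathfrak{C}$ that is determined on generators by $T_{b} \mapsto \gamma^{b}$, which is the assertion of the theorem.
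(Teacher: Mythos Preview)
Your proposal is correct and follows the same route as the paper, which simply cites Lemma~\ref{lim-sym-b} and Theorem~4.8 of~\cite{R-SN} for the two inclusions; you have just filled in the routine details (continuity, the unitary-equivalence transport via Theorem~\ref{equiv-unitaria}, and the fact that $\gamma\mapsto M_{\gamma}$ is an isometric $*$-isomorphism) that the paper leaves implicit.
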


\vspace{.3cm}
Obviously, in this context, the spectral matrix-valued function $\gamma^b$ is defined and continuous on $\overline{\Pi}$, but 
it is constant along each horizontal straight line. 
Thus,  $\gamma^b$ is identified with a continuous function on $\overline{\R}_+$.


	\subsection{Toeplitz operators with continuous symbols $a(\im \zeta_{1})$.}\label{Alg-a}

Our next stage is the study of the $C^*$-algebra generated by all Toeplitz operators  $T_a$ acting on the poly-Bergman space $\mathcal{A}^{2}_{(1,n)}(D_{2})$,
with symbols pf the form $\tilde{a}(\zeta) = a(\im \zeta_{1})$. 
Recall that $\gamma^a$ is given by
$$\gamma^{a}(x_{1}, x_{2}) = \int_{\R} a\left( \frac{-x_{1}+y_{1}}{2\sqrt{x_{2}}} \right) 
(h_0(y_{1}))^{2} dy_{1} I_{n\times n}$$
for all $(x_1,x_2)\in\Pi=\R\times\R_+$.
It is easy to see that $\gamma^a$ is continuous on $\Pi$.
Note that $\gamma^a$ can be identified with the scalar function 
\begin{align*}
       \Pi & \rightarrow \C \\
(x_1,x_2) & \mapsto \int_{\R} a\left(\frac{-x_{1}+y_{1}}{2\sqrt{x_{2}}} \right) (h_0(y_{1}))^{2} dy_{1}.
\end{align*}
This  function  was obtained in  \cite{Y-R} as the spectral function of the Toeplitz operator acting on the Bergman space $\mathcal{A}^{2}(D_{2})$ with symbol $\tilde{a}(\zeta) = a(\im \zeta_{1})$. 
Based on the results obtained in \cite{Y-R}, we state the following theorem in the context of Toeplitz operators acting on $\mathcal{A}^{2}_{(1,n)}(D_{2})$.

\begin{thm}
	The $C^{*}$-algebra generated by all Toeplitz operators of the form $T_{a}$, where  $\tilde{a}(\zeta) = a(\im \zeta_1)$ for some $a \in C(\overline{\R})$, is isomorphic and isometric to the $C^{*}$-algebra $C(\triangle)$,  where the  quotient space $\triangle=\overline{\Pi} / (\overline{\R} \times \{+\infty\})$  is defined by the identification of $\overline{\R} \times \{\infty\} $ to a point. Futhermore, the $C^{*}$-algebra generated by all  Toeplitz operators $T_{a}$ with $a \in PC(\overline{\R}, \{0\})$,
	 is isomorphic to the $C^{*}$-algebra $C(\overline{\Pi})$, where $PC(\overline{\R}, \{0\})$ consists of all functions continuous on $\overline{\R}\setminus \{0\}$  and having one-sided limits at $0$.
\end{thm}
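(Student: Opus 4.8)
The plan is to remove the matrix structure, reduce to a scalar $C^{*}$-algebra of functions on $\Pi=\R\times\R_{+}$, and then identify that algebra with $C(\triangle)$ (resp.\ $C(\overline{\Pi})$) by computing its maximal ideal space and applying the Stone--Weierstrass theorem. First I would invoke Theorem \ref{equiv-unitaria}: the assignment $T_{a}\mapsto M_{\gamma^{a}}$ extends to an isometric $*$-isomorphism of the $C^{*}$-algebra generated by the operators $T_{a}$ onto the one generated by the multiplication operators $M_{\gamma^{a}}$ on $(L^{2}(\R\times\R_{+}))^{n}$. By (\ref{sim-a-2}) the spectral function is already scalar, $\gamma^{a}=g^{a}\,I_{n\times n}$ with $g^{a}(x_{1},x_{2})=\int_{\R}a\bigl(\tfrac{-x_{1}+y_{1}}{2\sqrt{x_{2}}}\bigr)(h_{0}(y_{1}))^{2}\,dy_{1}$; the factor $I_{n\times n}$ appears because the Laguerre part of (\ref{gamma-c}) integrates to the identity by orthonormality of the $\ell_{m}$. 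Since multiplication by a bounded continuous function on $\Pi$ has operator norm equal to its supremum norm, $f\mapsto f\,I_{n\times n}$ is an isometric $*$-isomorphism of the uniformly closed subalgebra $\mathcal{A}\subseteq C_{b}(\Pi)$ generated by $\{g^{a}\}$ onto the $C^{*}$-algebra generated by $\{M_{\gamma^{a}}\}$. Thus everything reduces to identifying $\mathcal{A}$, and the point to stress is that $g^{a}$ is \emph{exactly} the scalar spectral function analysed for the Bergman space $\mathcal{A}^{2}(D_{2})$ in \cite{Y-R}, so the $(1,n)$ case collapses onto the $n=1$ case up to the harmless $I_{n\times n}$.

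To identify $\mathcal{A}$ for $a\in C(\overline{\R})$, I would write $(h_{0}(y_{1}))^{2}=\pi^{-1/2}e^{-y_{1}^{2}}$ and change variables to obtain $g^{a}(x_{1},x_{2})=\int_{\overline{\R}}a\,d\nu_{(x_{1},x_{2})}$, where $\nu_{(x_{1},x_{2})}$ is the Gaussian probability measure on $\overline{\R}$ with mean $-x_{1}/(2\sqrt{x_{2}})$ and variance $1/(8x_{2})$; the map $(x_{1},x_{2})\mapsto\nu_{(x_{1},x_{2})}$ is a continuous injection of $\Pi$ into the compact metrizable space of probability measures on $\overline{\R}$ with its weak-$*$ topology. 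Since $C(\overline{\R})$ separates probability measures, the maximal ideal space $M(\mathcal{A})$ may be identified with the weak-$*$ closure $K$ of $\{\nu_{(x_{1},x_{2})}:(x_{1},x_{2})\in\Pi\}$, the function $g^{a}$ corresponding under the Gelfand transform to $\nu\mapsto\int a\,d\nu$. I would then compute $K$: taking limits of the Gaussians along all directions in $\Pi$ --- equivalently, passing to coordinates adapted to the mean and variance, which is the change of variable used in \cite{Y-R} --- one finds that the new points are exactly the Dirac masses $\delta_{t}$ ($t\in\overline{\R}$) and the binary mixtures $p\,\delta_{-\infty}+(1-p)\,\delta_{+\infty}$ ($p\in[0,1]$), and that $K$ is a closed $2$-disk homeomorphic to $\triangle=\overline{\Pi}/(\overline{\R}\times\{+\infty\})$ (under the homeomorphism the collapsed point matches $\delta_{0}=\lim_{x_{2}\to+\infty}\nu_{(x_{1},x_{2})}$ and $\delta_{\pm\infty}$ the two bottom corners). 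Finally, the family $\{\,\nu\mapsto\int a\,d\nu : a\in C(\overline{\R})\,\}$ is self-adjoint ($\overline{g^{a}}=g^{\bar{a}}$), contains the constants ($a\equiv1$), and separates the points of $K$, so Stone--Weierstrass gives $\mathcal{A}\cong C(K)\cong C(\triangle)$; combined with the first step this proves the first assertion.

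For $a\in PC(\overline{\R},\{0\})$ I would run the same argument with $C(\overline{\R})$ replaced by $PC(\overline{\R},\{0\})=C(\widehat{\R})$, where $\widehat{\R}$ is the compactification of $\R\setminus\{0\}$ in which $0$ is doubled into two points $0^{-}$ and $0^{+}$; the maximal ideal space is then the weak-$*$ closure of the same Gaussians read as measures on $\widehat{\R}$. The only new limit measures are the mixtures $p\,\delta_{0^{-}}+(1-p)\,\delta_{0^{+}}$ ($p\in[0,1]$), which arise when the mean and the standard deviation of $\nu_{(x_{1},x_{2})}$ both tend to $0$ at comparable rates; these split the former point $\delta_{0}$ into a fourth boundary arc, so the closure becomes a closed disk whose boundary decomposes naturally into four arcs (joined at $\delta_{-\infty},\delta_{0^{-}},\delta_{0^{+}},\delta_{+\infty}$), hence homeomorphic to the closed rectangle $\overline{\Pi}=\overline{\R}\times\overline{\R}_{+}$, and Stone--Weierstrass gives $\mathcal{A}\cong C(\overline{\Pi})$. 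I expect the only genuinely non-routine step to be this identification of the maximal ideal space with $\triangle$ (resp.\ $\overline{\Pi}$) --- the place where the non-obvious change of variable is needed, and which is carried out in detail in \cite{Y-R} --- the reduction in the first step and the two applications of Stone--Weierstrass then being formal.
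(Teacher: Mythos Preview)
Your reduction in the first paragraph is exactly the paper's argument: the paper does not prove this theorem but merely observes, via \eqref{sim-a-2}, that $\gamma^{a}=g^{a}I_{n\times n}$ is scalar and that $g^{a}$ coincides with the spectral function already analysed for the Bergman space in \cite{Y-R}, then defers to that reference. So on the level of what the paper actually proves, your proposal matches it.

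The probabilistic framing you add --- realising the Gelfand space as the weak-$*$ closure $K$ of the family of Gaussians in $\mathrm{Prob}(\overline{\R})$ (resp.\ $\mathrm{Prob}(\widehat{\R})$) --- is a correct and conceptually clean alternative to the method one sees spelled out in the paper's Section~\ref{con-func} for the $(n,1)$ case (and, by analogy, in \cite{Y-R}): there one fixes the explicit change of variable $\Phi(x_{1},x_{2})=(x_{1},x_{2}/(x_{1}^{2}+1))$, proves by direct estimates that $\gamma^{a}\circ\Phi^{-1}$ extends continuously to $\overline{\Pi}$ with the top edge collapsing, and then separates pure states. Your approach explains \emph{why} a disk appears; the paper's approach hands you the homeomorphism $\triangle\to K$ explicitly. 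One small caution: the ``coordinates adapted to the mean and variance'' are \emph{not} the $\Phi$ above --- in $(\mu,\sigma)$-coordinates the side edges $\mu\to\pm\infty$ each collapse to a single Dirac mass and the full family of binary mixtures $p\delta_{-\infty}+(1-p)\delta_{+\infty}$ only appears when $\mu,\sigma\to\infty$ with $\mu/\sigma$ fixed, so the naive rectangle in those coordinates is not $K$. The picture that does work (and that your list of boundary measures captures) is the one through $\Phi$: the bottom edge gives the binary mixtures, the two side edges give the $\delta_{t}$ with $t\neq 0$, and the top edge collapses to $\delta_{0}$; adjoining $\chi_{+}$ then un-collapses the top edge, yielding $K\cong\overline{\Pi}$ in the piecewise-continuous case.
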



\section{Toeplitz operators on the poly-Bergman space  $\mathcal{A}^{2}_{(n,1)}(D_{2})$.} \label{cap-n1}
\label{symb-discont}

In this section we study certain $C^*$-algebras generated by Toeplitz operators with nilpotent symbols acting on the poly-Bergman-type space $\mathcal{A}^{2}_{(n,1)}(D_{2})$. 
We apply techniques as in Section \ref{OT}.
A Toeplitz operator acting on $\mathcal{A}^{2}_{(n,1)}(D_{2})$ with nilpotent symbol 
$\tilde{c}(\zeta_{1}, \zeta_{2})=c(\im \zeta_{1},\im \zeta_{2}- |\zeta_{1}|^{2})$  is defined by
$$(T_{c}f)(\zeta)=(B_{(n,1)}(\tilde{c} f))(\zeta),$$
where $B_{(n,1)}$ is the orthogonal projection from $L^{2}(D_{2})$ onto $\mathcal{A}^{2}_{(n,1)}(D_{2})$.
The  Bargmann-type operator  $R_{(n,1)}$ identifies the space $\mathcal{A}^{2}_{(n,1)}(D_{2})$ with
$(L^{2}(\R\times \R_{+}))^n$.

\begin{thm}\label{equiv-unitaria-4}
	Let $\tilde{c}$ be a nilpotent symbol.
	Then the Toeplitz operator $T_{c}$ is unitary equivalent to the multiplication operator $\gamma^{c}I=R_{(n,1)}T_{c}R_{(n,1)}^*$,  
	where the matrix-valued function $\gamma^{c} :\R \times \R_{+} \rightarrow M_n(\C)$ is given by
	\begin{equation}\label{gamma-c-4}
	\gamma^{c}(x_1,x_2)=
	\int_{\R}\int_{\R_{+}} c\left(\frac{-x_{1}+y_{1}}{2\sqrt{x_{2}}} ,\frac{y_{2}}{2x_{2}}\right) (\ell_0(y_{2}))^{2}
	H(y_{1})[H(y_{1})]^T dy_{2}dy_{1}.
	\end{equation}
\end{thm}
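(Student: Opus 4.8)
The plan is to mirror the proof of Theorem \ref{equiv-unitaria} almost verbatim, replacing the embedding $R_{0(1,n)}$ by $R_{0(n,1)}$ and exchanging the roles of the Hermite and Laguerre factors. First I would write
\[
R_{(n,1)}T_{c}R_{(n,1)}^*
= R_{(n,1)}B_{(n,1)}M_{\tilde c}R_{(n,1)}^*
= R_{(n,1)}R_{(n,1)}^*R_{(n,1)}M_{\tilde c}R_{(n,1)}^*
= R_{(n,1)}M_{\tilde c}R_{(n,1)}^*,
\]
using $R_{(n,1)}R_{(n,1)}^*=I$ from Section \ref{identificacion}, and then expand $R_{(n,1)}=R_{0(n,1)}^*U$ and $R_{(n,1)}^*=U^*R_{0(n,1)}$ with $U=V_2V_1U_1U_0$. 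The conjugation of $M_{\tilde c}$ by $U$ is exactly the computation already carried out in the proof of Theorem \ref{equiv-unitaria}: $U_0M_{\tilde c}U_0^{-1}=M_{\tilde c\circ\kappa}$, then $U_1$ leaves this multiplication operator unchanged, and $V_1$, $V_2$ successively compose the symbol with $\psi_1$ and $\psi_2$, yielding
\[
U M_{\tilde c} U^{*} f(\xi)
= c\!\left(\frac{-x_{1}+y_{1}}{2\sqrt{|x_{2}|}},\,\frac{y_{2}}{2|x_{2}|}\right) f(\xi),
\qquad \xi=(x_1+iy_1,\,x_2+iy_2)\in\Do.
\]
This part requires no new work since it does not involve the embeddings at all.

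The only genuinely new step is sandwiching this multiplication operator between $R_{0(n,1)}^*$ and $R_{0(n,1)}$. Here I would take $g=(g_1,\dots,g_n)^T\in(L^2(\R\times\R_+))^n$, apply the explicit formula for $R_{0(n,1)}g$, namely $(R_{0(n,1)}g)(x_1,y_1,x_2,y_2)=\chi_{\R_+}(x_2)\,\ell_0(y_2)\,[H(y_1)]^Tg(x_1,x_2)$, multiply by the symbol, and then apply $R_{0(n,1)}^*$, which integrates against $\ell_0(y_2)H(y_1)$. Collecting the scalar factors $\ell_0(y_2)^2$ and the rank-one matrix $H(y_1)[H(y_1)]^T$, and noting that $g(x_1,x_2)$ does not depend on $y_1,y_2$ so it pulls out of the integral, gives
\[
\bigl(R_{(n,1)}T_cR_{(n,1)}^*g\bigr)(x_1,x_2)
= \left(\int_{\R}\int_{\R_{+}} c\!\left(\tfrac{-x_{1}+y_{1}}{2\sqrt{x_{2}}},\tfrac{y_{2}}{2x_{2}}\right)(\ell_0(y_2))^2 H(y_1)[H(y_1)]^T\,dy_2\,dy_1\right) g(x_1,x_2),
\]
which is precisely $\gamma^c(x_1,x_2)\,g(x_1,x_2)$ with $\gamma^c$ as in \eqref{gamma-c-4}. (Since we work on the range $x_2>0$ one replaces $|x_2|$ by $x_2$.) Finally one remarks that $\gamma^c$ is bounded — its entries are integrals of $c$ against the $L^1$ densities $\ell_{j-1}(y_1)\ell_{k-1}(y_1)(\ell_0(y_2))^2$ — so $M_{\gamma^c}=\gamma^cI$ is a bounded operator, and the displayed identity exhibits the claimed unitary equivalence.

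I do not anticipate a serious obstacle: the structural identities involving $B_{(n,1)}$, $R_{(n,1)}$ and $U$ are all recorded in Section \ref{identificacion} and Theorem \ref{puro-iso}, and the symbol-conjugation computation is identical to the one in Theorem \ref{equiv-unitaria}. The one point that needs a little care is bookkeeping the order of the vector/matrix factors: in $R_{0(n,1)}$ the Hermite vector appears as $[H(y_1)]^T$ acting on the column $g$, while in $R_{0(n,1)}^*$ it reappears as the column $H(y_1)$, so their composition produces the outer product $H(y_1)[H(y_1)]^T$ in the correct order, and similarly the Laguerre ground state contributes the scalar $\ell_0(y_2)^2$ rather than a matrix factor. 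Keeping this ordering straight is the only place an error could creep in.
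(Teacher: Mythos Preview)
Your proposal is correct and follows essentially the same route as the paper's own proof: the same chain $R_{(n,1)}T_cR_{(n,1)}^*=R_{0(n,1)}^*UM_{\tilde c}U^*R_{0(n,1)}$, the same conjugation computation borrowed from Theorem~\ref{equiv-unitaria}, and the same explicit sandwich with $R_{0(n,1)}$ and $R_{0(n,1)}^*$. The only slip is a typo in your boundedness remark, where the $L^1$ densities should be $h_{j-1}(y_1)h_{k-1}(y_1)(\ell_0(y_2))^2$ rather than $\ell_{j-1}(y_1)\ell_{k-1}(y_1)(\ell_0(y_2))^2$; this does not affect the argument.
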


\begin{proof}
	We have
	\begin{eqnarray*}
	R_{(n,1)}T_{c}R_{(n,1)}^*
	&=& R_{(n,1)}B_{(n,1)} (M_{\tilde{c}}) R_{(n,1)}^*\\
	&=& R_{(n,1)}R_{(n,1)}^{*}R_{(n,1)} M_{\tilde{c}} R_{(n,1)}^*\\
	&=&R_{(n,1)} M_{\tilde{c}} R_{(n,1)}^*\\
	&=&R_{0(n,1)}^{*}V_{2}V_{1}U_{1}U_{0} M{\tilde{c}} U_0^{-1}U_1^{-1}V_{1}^{-1}V_{2}^{-1} R_{0(n,1)},
\end{eqnarray*}
	where
	$$V_{2}V_1U_1U_0(M{\tilde{c}})U_0^* U_1^* V_1^{-1} V_{2}^{-1}f(\xi) = c\left( \frac{-x_{1}+y_{1}}{2\sqrt{|x_{2}|}} ,\frac{y_{2}}{2|x_{2}|}\right) f(\xi)$$
for all $f\in L^2(\Do)$, where $\xi = (x_1 + iy_1,x_2+iy_2)\in\Do$.

	Finally, let $g=(g_1,..., g_n)^T \in(L^{2}(\R \times \R_{+}))^{n}$ and $B=(R_{(n,1)}T_{c}R_{(n,1)}^* g)(x_1, x_2)$,
	then
		\begin{eqnarray*}
		B&=& (R_{0(n,1)}^{*}c\left( \frac{-x_{1}+y_{1}}{2\sqrt{|x_{2}|}}  ,\frac{y_{2}}{2|x_{2}|}\right) R_{0(n,1)}g)(x_1, x_2)\\
		&=&	 \int_{\R}\int_{\R_{+}} c\left( \frac{-x_{1}+y_{1}}{2\sqrt{|x_{2}|}}  ,\frac{y_{2}}{2|x_{2}|}\right) (R_{0(n,1)}g)(x_{1},y_{1}, x_{2}, y_{2})  \ell_0(y_{2}) H(y_1) dy_{2} dy_{1}\\
		&=& \int_{\R}\int_{\R_{+}}  c\left( \frac{-x_{1}+y_{1}}{2\sqrt{|x_{2}|}}  ,\frac{y_{2}}{2|x_{2}|}\right) 	\ell_{0}(y_{2}) [H(y_1)]^T g(x_1,x_2) \ell_0(y_{2}) H(y_1) dy_{2} dy_{1}\\
		&=&  \int_{\R}\int_{\R_{+}}  c\left( \frac{-x_{1}+y_{1}}{2\sqrt{|x_{2}|}}  ,\frac{y_{2}}{2|x_{2}|}\right) 
		(\ell_0(y_2))^2 H(y_1)  [H(y_1)]^T g(x_1,x_2) dy_{2} dy_{1}.
	\end{eqnarray*}
	Thus
	$R_{(n,1)}T_{c}R_{(n,1)}^*= \gamma^{c} I,$
	where $\gamma^{c}(x_1,x_2)$ is given in (\ref{gamma-c-4}).
\end{proof}

\vspace{.3cm}
As in Section \ref{OT}, we consider two specific cases of nilpotent symblos. Firstly, we will take Toeplitz operators with 
symbols of the form $\tilde{b}(\zeta) = b(\im \zeta_{2}- |\zeta_{1}|^{2})$, for which
\begin{equation}\label{sim-b-3}
	\gamma^{b}(x_{1}, x_{2})= \gamma^{b}( x_{2})= \int_{\R_{+}} b\left(\frac{y_{2}}{2x_{2}}\right) 
	(\ell_0(y_2))^2  dy_{2} I_{n\times n}.
\end{equation}
This spectral function can be identified with the scalar function 
\begin{align*}
\R_+& \rightarrow\C \\
x_2 & \mapsto
\int_{\R_{+}} b\left(\frac{y_{2}}{2x_{2}}\right)  (\ell_0(y_2))^2  dy_{2},
\end{align*} which was studied in \cite{R-SN}.  Thus, the algebra generated by Toeplitz operators of the form $T_{b}$, where $b\in L^{\infty}_{\{0, +\infty\}}(\R_{+})$, has been completely described.

Secondly, we analyze  Toeplitz operators with symbols of the form $\tilde{a}(\zeta) = a(\im \zeta_{1})$, in this case we have
\begin{equation}\label{sim-a-3}
	\gamma^{a}(x_{1}, x_{2}) = \int_{\R} a\left( \frac{-x_{1}+y_{1}}{2\sqrt{x_{2}}} \right) 
	H(y_{1})[H(y_{1})]^T  dy_{1} .\\
\end{equation}
From this point on, we focus on describing the $C^*$-algebra generated by matrix-valued functions of this type.


\subsection{Continuity of the spectral function $\gamma^a$.}\label{con-func}

In order to describe the $C^*$-algebra generated  by Toeplitz operators acting on $\mathcal{A}^{2}_{(n,1)}(D_{2})$ with nilpotent symbols of the form $\tilde{a}(\zeta) = a(\im \zeta_{1})$, first we will analyze the continuous extension of 
$\gamma^{a}=(\gamma_{jk}^a)$ to the compactification $\overline{\Pi}:=\overline{\R} \times \overline{\R}_+$.
Make the change of variable $y_1 \mapsto 2\sqrt{x_2} y_1+x_1$ in the integral representation of  $\gamma_{jk}^{a}$, then
$$\gamma^{a}_{jk}(x_{1}, x_{2})= 2\sqrt{x_{2}} \int_{\R} 
a(y_{1})h_{j-1}(2\sqrt{x_{2}}y_{1}+x_{1})h_{k-1}(2\sqrt{x_{2}}y_{1}+x_{1})  dy_{1}.$$
The function $\gamma_{jk}^a$ is continuous at each point $(x_{1},x_{2}) \in \Pi$ by the continuity of
$h_{j-1}h_{k-1}$ and the Lebesgue dominated convergence theorem.
Next, we will prove that  $\gamma_{jk}^a$ has a one-sided  limit at each point of $\R \times \{0\}$.
For  $a \in L^{\infty}(\R)$ we introduce the notation
\begin{equation}\label{limit-a-infty}
	a_{-}= \lim_{y\rightarrow -\infty} a(y) \quad \text{and} \quad 	a_{+}=\lim_{y\rightarrow +\infty} a(y)
\end{equation}
if such limits exist.

\begin{lem}\label{lim-gamma-x0-0}
	Let $a\in L^{\infty}(\R)$, and suppose that $a$ has limits at  $\pm\infty$.
	Then for each $x_{0} \in \R$, the spectral matrix-valued function $\gamma^{a}:
    \Pi \rightarrow M_n(C)$  satisfies
	\begin{align}\label{limit-R}
	\begin{split}
	\lim_{(x_{1}, x_{2}) \rightarrow (x_{0}, 0)} \gamma^{a}(x_{1}, x_{2}) =\ & a_{-}\int_{-\infty}^{x_{0}} 
	H(y_1)[H(y_1)]^T dy_{1}  \\
	& 
	+  a_{+}\int_{x_{0}}^{+\infty} H(y_1)[H(y_1)]^T  dy_{1} .
	\end{split}
	\end{align}
\end{lem}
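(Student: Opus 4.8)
The plan is to work with the change-of-variable representation
\[
\gamma^{a}_{jk}(x_{1},x_{2})= 2\sqrt{x_{2}} \int_{\R} a(y_{1})\,h_{j-1}(2\sqrt{x_{2}}\,y_{1}+x_{1})\,h_{k-1}(2\sqrt{x_{2}}\,y_{1}+x_{1})\,dy_{1},
\]
and to reverse it back into a more convenient form. Substituting $u = 2\sqrt{x_2}\,y_1 + x_1$ turns this into
\[
\gamma^{a}_{jk}(x_{1},x_{2})= \int_{\R} a\!\left(\frac{u-x_{1}}{2\sqrt{x_{2}}}\right) h_{j-1}(u)\,h_{k-1}(u)\,du,
\]
which is exactly the original integral. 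The point is that as $(x_1,x_2)\to(x_0,0)$, the argument $\frac{u-x_1}{2\sqrt{x_2}}$ tends to $+\infty$ when $u>x_0$, to $-\infty$ when $u<x_0$, and the single point $u=x_0$ is Lebesgue-null. So heuristically $a\big(\tfrac{u-x_1}{2\sqrt{x_2}}\big)$ converges pointwise a.e.\ to $a_{+}\chi_{(x_0,\infty)}(u) + a_{-}\chi_{(-\infty,x_0)}(u)$, and since $\lvert a\rvert \le \lVert a\rVert_\infty$ while $h_{j-1}h_{k-1}\in L^1(\R)$, the dominated convergence theorem yields
\[
\lim_{(x_1,x_2)\to(x_0,0)} \gamma^{a}_{jk}(x_1,x_2) = a_{-}\int_{-\infty}^{x_0} h_{j-1}(u)h_{k-1}(u)\,du + a_{+}\int_{x_0}^{+\infty} h_{j-1}(u)h_{k-1}(u)\,du,
\]
which is the $(j,k)$-entry of the claimed matrix identity \eqref{limit-R}.

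The steps I would carry out, in order: (1) record the integral representation $\gamma^a_{jk}(x_1,x_2)=\int_\R a\big(\tfrac{u-x_1}{2\sqrt{x_2}}\big)h_{j-1}(u)h_{k-1}(u)\,du$; (2) fix $x_0$ and an arbitrary sequence $(x_1^{(m)},x_2^{(m)})\to(x_0,0)$ with $x_2^{(m)}>0$, and show that for each fixed $u\ne x_0$ one has $\tfrac{u-x_1^{(m)}}{2\sqrt{x_2^{(m)}}}\to \sgn(u-x_0)\cdot\infty$ — this is elementary since $u-x_1^{(m)}\to u-x_0\ne 0$ while $2\sqrt{x_2^{(m)}}\to 0^+$; (3) invoke the existence of the limits $a_{\pm}$ to get pointwise a.e.\ convergence of the integrand to $\big(a_{-}\chi_{(-\infty,x_0)}(u)+a_{+}\chi_{(x_0,\infty)}(u)\big)h_{j-1}(u)h_{k-1}(u)$; (4) apply dominated convergence with dominating function $\lVert a\rVert_\infty\,\lvert h_{j-1}(u)h_{k-1}(u)\rvert\in L^1(\R)$; (5) since the sequence was arbitrary, conclude the stated limit and reassemble the matrix identity entrywise using $H(y_1)[H(y_1)]^T = (h_{j-1}(y_1)h_{k-1}(y_1))_{j,k}$.

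I do not expect a serious obstacle here; the only mild care needed is in step (2)–(3), namely that the convergence $a\big(\tfrac{u-x_1}{2\sqrt{x_2}}\big)\to a_{\pm}$ be handled correctly when $a$ is merely bounded with limits at $\pm\infty$ (it need not be continuous): one should phrase it as "for every $\varepsilon>0$ there is $M$ with $\lvert a(t)-a_{+}\rvert<\varepsilon$ for $t>M$, and for $u>x_0$ fixed the quantity $\tfrac{u-x_1}{2\sqrt{x_2}}$ eventually exceeds $M$." The integrability of $h_{j-1}h_{k-1}$ over all of $\R$ (not just an $L^2$ statement) follows from each $h_m$ being a polynomial times $e^{-u^2/2}$, so the product decays like $e^{-u^2}$ times a polynomial and is plainly in $L^1(\R)$; I would state this explicitly to justify the dominating function. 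Everything else is routine.
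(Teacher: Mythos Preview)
Your proof is correct. Both your argument and the paper's rest on the same integral representation
\[
\gamma^{a}_{jk}(x_1,x_2)=\int_{\R} a\!\left(\frac{u-x_{1}}{2\sqrt{x_{2}}}\right) h_{j-1}(u)\,h_{k-1}(u)\,du
\]
and the same underlying observation that for $u\neq x_0$ the argument of $a$ tends to $\pm\infty$ as $(x_1,x_2)\to(x_0,0)$. The paper, however, executes this via an explicit $\epsilon$--$\delta$ estimate: it fixes a small window $[x_0-\delta_1,\,x_0+\delta_1]$ whose contribution is controlled by the $L^1$-smallness of $|h_{j-1}h_{k-1}|$ there, and on the complement it forces $\frac{u-x_1}{2\sqrt{x_2}}$ past a threshold $N$ so that $a$ is uniformly close to $a_{\pm}$. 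You instead pass to an arbitrary sequence, establish pointwise a.e.\ convergence of the integrand, and apply dominated convergence with the majorant $\|a\|_\infty\,|h_{j-1}h_{k-1}|\in L^1(\R)$. Your route is shorter and avoids the manual five-piece splitting; the paper's hands-on estimate is more self-contained but yields no extra information. The one point you flagged yourself---that $h_{j-1}h_{k-1}$ is genuinely in $L^1(\R)$ because each Hermite function is a polynomial times $e^{-u^2/2}$---is exactly what is needed to justify the dominating function, and with that in place nothing further is required.
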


\begin{proof}
Let $A$ denote the $(j, k)$-entry of the right-hand side of (\ref{limit-R}).
	Take $\epsilon > 0$. We will prove that there exists $\delta >0$ such that
	$|\gamma_{}^{a}(x_{1}, x_{2})-A| < \epsilon$ whenever $|x_{1}-x_{0}|< \delta$ and $0<x_{2}< \delta$.
	Note that $|a_-|,|a_+| \leq \|a\|_{\infty}$.
	Since $C_{jk} = \int_{-\infty}^{\infty} |h_{j-1}(y_{1}) h_{k-1}(y_{1})| dy_1 >0$,
	there exists  $\delta_{1} >0$  such that
		$$\|a\|_{\infty} \int_{-\delta_{1}+x_{0}}^{\delta_{1}+x_{0}}|h_{j-1}(y_{1}) h_{k-1}(y_{1})| dy_{1} < \frac{\epsilon}{5 }.$$
	Then
		\begin{eqnarray*} 
		I
		&:=&|\gamma_{jk}^{a}(x_{1}, x_{2})-A| \\
		& = & \bigg| \int_{-\infty}^{\infty} a\left(\frac{-x_{1}+y_{1}}{2\sqrt{x_{2}}}  \right)  h_{j-1}(y_{1}) h_{k-1}(y_{1})dy_{1}\\ 
		& & -  a_{-}\int_{-\infty}^{x_{0}} h_{j-1}(y_{1}) h_{k-1}(y_{1})dy_{1}
		- a_{+}\int_{x_{0}}^{\infty} h_{j-1}(y_{1}) h_{k-1}(y_{1})dy_{1}\bigg| \\
		& \leq &\int_{-\infty}^{-\delta_{1}+x_{0}}\left| a\left( \frac{-x_{1}+y_{1}}{2\sqrt{x_{2}}} \right) -a_{-}\right|  |h_{j-1}(y_{1}) h_{k-1}(y_{1})|d{y_{1}} \\
		& & + |a_{-}| \int_{-\delta_{1}+x_{0}}^{x_{0}}|h_{j-1}(y_{1}) h_{k-1}(y_{1})|dy_{1} \\
		&& + |a_{+}| \int_{x_{0}}^{\delta_{1}+x_{0}}|h_{j-1}(y_{1}) h_{k-1}(y_{1})| dy_{1}\\
		& & + \int_{-\delta_{1}+x_{0}}^{\delta_{1}+x_{0}} \left| a\left( \frac{-x_{1}+y_{1}}{2\sqrt{x_{2}}} \right)  h_{j-1}(y_{1}) h_{k-1}(y_{1}) \right| d{y_{1}} \\
		& & + \int_{\delta_{1}+x_{0}}^{\infty}\left| a\left( \frac{-x_{1}+y_{1}}{2\sqrt{x_{2}}} \right) -a_{+}\right|  |h_{j-1}(y_{1}) h_{k-1}(y_{1})|d{y_{1}} \\
		& \leq &    C_{jk}  \max_{-\infty< y_{1}< -\delta_{1}+x_{0}}\left| a\left( \frac{-x_{1}+y_{1}}{2\sqrt{x_{2}}} \right) -a_{-}  \right| +
		\frac{ 3\epsilon}{5} \\
		& & + C_{jk} \max_{\delta_{1}+ x_{0}< y_{1}< \infty}\left| a\left( \frac{-x_{1}+y_{1}}{2\sqrt{x_{2}}} \right) -a_{+}  \right|. 
	\end{eqnarray*}
	
We have assumed that $a$ converges  at $\pm \infty$. Thus there exists $N>0$ such that 
	$|a(y)-a_{-}|< \epsilon/(5C_{jk})$ and $|a(y)-a_{+}|< \epsilon/(5C_{jk})$ for $|y|> N$. 
	Let $\delta=\min \{\delta_{1}/2, \delta_1^2/(16N^2) \}$.
	Then we have $\frac{1}{2\sqrt{x_{2}}}|-x_{1}+y_{1}| > N$ if $|x_1-x_0| < \delta$,
	$ 0 <x_2 < \delta$ and $|y_1-x_0|\geq \delta_{1}$. 
	Thus,
	$$\max_{-\infty< y_{1}< -\delta_{1}+x_{0}}\left| a\left(\frac{-x_{1}+y_{1}}{2\sqrt{x_{2}}} \right) -a_{-}  \right|< \frac{\epsilon}{5C_{jk}} $$
	and
	$$\max_{\delta_{1}+ x_{0}< y_{1}< \infty}\left| a\left(\frac{-x_{1}+y_{1}}{2\sqrt{x_{2}}} \right)  -a_{+}  \right| < \frac{\epsilon}{5C_{jk}}.$$
Finally, we conclude that $|\gamma_{jk}^{a}(x_{1}, x_{2})-A|<\epsilon$ whenever 
		$|x_{1}-x_{0}|< \delta$ and $0<x_{2}< \delta$.
\end{proof}

\vspace{.4cm}
In general, the matrix-valued function $\gamma^a$ does not converge at the points  $(\pm \infty, +\infty)\in \overline{\Pi}$;  
however, $\gamma^{a}$  has limit values along the parabolas $x_{2}=\alpha (x_{1}^{2}+1)$, with $\alpha >0$.
For this reason we introduce the mapping $\Phi: \Pi \longrightarrow \Pi$ given by
$$
\Phi(x_{1}, x_{2})=\left(x_{1}, \frac{x_{2}}{x_{1}^{2} + 1}\right).$$
We will prove that
$\phi^{a}=\gamma^a \circ \Phi^{-1}:\Pi\rightarrow M_n(\C)$ has a continuous extension to
$\overline{\Pi}=\overline{\R}\times\overline{\R}_+$ with the usual topology.
It is easy to see that $\Phi^{-1}(t_1,t_2)=(t_1, (t_1^2+1)t_2)$. 
Concerning the spectral properties of $T_a$, the matrix-valued function  $\phi^{a}$ contains the same information as $\gamma^a$, 
but $\phi^a$ behaves much better than $\gamma^a$, at least for $a$ continuous on $\overline{\R}$. From now on we take $\phi^a$ 
as the spectral matrix-valued function for the Toeplitz operator $T_a$. A direct computation shows that
$$\phi^{a}(t_{1}, t_{2})=
\int_{-\infty}^{\infty} a\left(\frac{-t_{1}+s_{1}}{2\sqrt{t_{2}(t_{1}^{2} +1)}}\right) 	
H(s_{1})[H(s_{1})]^T  ds_{1}.$$

Note that both $\Phi$ and $\Phi^{-1}$ are continuous on $\mathbb{R} \times [0,+\infty)$. 
In addition, the spectral function $\phi^{a}=\gamma^{a} \circ \Phi^{-1}$ is continuous on $\mathbb{R} \times [0,+\infty)$ because $\gamma^{a}$ is. Since 
$\Phi^{-1}(t_1, 0)=(t_1,0)$,  we have that $\phi^{a}(t_{1}, 0)=\gamma^{a}(t_1, 0)$ for all $t_1\in\C$.

\begin{thm}
	For $a \in C(\overline{\R})$, the spectral matrix-valued function $\phi^a: \Pi \rightarrow M_n (\mathbb{C})$ 
	can be extended continuously to $\overline{\Pi}=\overline{\R} \times \overline{\R}_+$.
	Furthermore,  $\phi^a$ is constant along $\overline{\mathbb{R}} \times \{+\infty\}$.
\end{thm}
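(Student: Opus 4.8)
The plan is to regard $\overline{\Pi}=\overline{\R}\times\overline{\R}_{+}$ as a closed rectangle and to extend $\phi^{a}$ across its boundary one edge at a time, each time reducing the continuity assertion to the dominated convergence theorem applied to
$$\phi^{a}(t_{1},t_{2})=\int_{-\infty}^{\infty} a\!\left(\frac{s_{1}-t_{1}}{\rho(t_{1},t_{2})}\right) H(s_{1})[H(s_{1})]^{T}\,ds_{1},\qquad \rho(t_{1},t_{2}):=2\sqrt{t_{2}(t_{1}^{2}+1)};$$
in every entry the function $\|a\|_{\infty}\,|h_{j-1}h_{k-1}|$ serves as an $L^{1}$ majorant. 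I will use freely that $\phi^{a}$ is already continuous on $\R\times[0,+\infty)$ (observed above, via Lemma \ref{lim-gamma-x0-0}), that $\phi^{a}(t_{1},0)=\gamma^{a}(t_{1},0)=a_{-}\int_{-\infty}^{t_{1}}H(y_{1})[H(y_{1})]^{T}\,dy_{1}+a_{+}\int_{t_{1}}^{+\infty}H(y_{1})[H(y_{1})]^{T}\,dy_{1}$, and that $\int_{\R}H(s_{1})[H(s_{1})]^{T}\,ds_{1}=I$ because the Hermite functions are orthonormal. The first case I would do is the top edge $\overline{\R}\times\{+\infty\}$: if $(t_{1}^{(m)},t_{2}^{(m)})\to(t_{1}^{0},+\infty)$ in $\overline{\Pi}$ with $t_{1}^{0}\in\overline{\R}$ arbitrary, then $\rho(t_{1}^{(m)},t_{2}^{(m)})\ge 2\sqrt{t_{2}^{(m)}}\to+\infty$ while $|t_{1}^{(m)}|/\rho(t_{1}^{(m)},t_{2}^{(m)})\le 1/(2\sqrt{t_{2}^{(m)}})\to0$, so for each fixed $s_{1}$ the argument of $a$ tends to $0$ and dominated convergence gives $\phi^{a}(t_{1}^{(m)},t_{2}^{(m)})\to a(0)I$. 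This single step shows $\phi^{a}$ extends continuously across the whole top edge, that the extension there equals the \emph{constant} $a(0)I$ (which is the ``furthermore'' claim), and that the corners $(\pm\infty,+\infty)$ get the value $a(0)I$.

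Next I would handle the vertical edge $\{+\infty\}\times(0,+\infty)$; the edge at $-\infty$ is mirror-symmetric. For a sequence in $\Pi$ with $(t_{1}^{(m)},t_{2}^{(m)})\to(+\infty,t_{2}^{0})$ and $0<t_{2}^{0}<\infty$, for each fixed $s_{1}$
$$\frac{s_{1}-t_{1}^{(m)}}{\rho(t_{1}^{(m)},t_{2}^{(m)})}=\frac{1}{2\sqrt{t_{2}^{(m)}}}\cdot\frac{s_{1}-t_{1}^{(m)}}{\sqrt{(t_{1}^{(m)})^{2}+1}}\ \longrightarrow\ -\frac{1}{2\sqrt{t_{2}^{0}}},$$
so dominated convergence gives $\phi^{a}(t_{1}^{(m)},t_{2}^{(m)})\to a\!\left(-\tfrac{1}{2\sqrt{t_{2}^{0}}}\right)I$. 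Hence the natural extension along this edge is $t_{2}\mapsto a\!\left(-\tfrac{1}{2\sqrt{t_{2}}}\right)I$, which, since $a\in C(\overline{\R})$, is continuous on all of $\overline{\R}_{+}$ with endpoint values $a(-\infty)I=a_{-}I$ at $t_{2}=0$ and $a(0)I$ at $t_{2}=+\infty$ --- the last matching the previous paragraph. Symmetrically the edge at $-\infty$ carries $t_{2}\mapsto a\!\left(\tfrac{1}{2\sqrt{t_{2}}}\right)I$, with endpoint values $a_{+}I$ at $t_{2}=0$ and $a(0)I$ at $t_{2}=+\infty$.

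The hard part is joint continuity at the bottom corners $(\pm\infty,0)$: there $t_{1}\to\pm\infty$ and $t_{2}\to0^{+}$ at uncontrolled relative rates, so $\rho(t_{1},t_{2})=2\sqrt{t_{2}(t_{1}^{2}+1)}$ has no determined limit (it may tend to $0$, to a finite positive number, or to $+\infty$), and one cannot read off the limit of the argument of $a$ from $\rho$. My remedy is to bound that argument away from the origin directly: if $(t_{1}^{(m)},t_{2}^{(m)})\to(+\infty,0)$ in $\Pi$, then for each fixed $s_{1}$ and all $m$ large enough that $t_{1}^{(m)}\ge\max(1,2|s_{1}|)$ one has $t_{1}^{(m)}-s_{1}\ge t_{1}^{(m)}/2$ and $\sqrt{(t_{1}^{(m)})^{2}+1}\le\sqrt{2}\,t_{1}^{(m)}$, so that
$$\left|\frac{s_{1}-t_{1}^{(m)}}{\rho(t_{1}^{(m)},t_{2}^{(m)})}\right|\ \ge\ \frac{t_{1}^{(m)}/2}{2\sqrt{2}\,\sqrt{t_{2}^{(m)}}\,t_{1}^{(m)}}\ =\ \frac{1}{4\sqrt{2}\,\sqrt{t_{2}^{(m)}}}\ \longrightarrow\ +\infty,$$
while the quantity itself is negative for large $m$; hence $a$ of it tends to $a_{-}$ and dominated convergence gives $\phi^{a}(t_{1}^{(m)},t_{2}^{(m)})\to a_{-}I$. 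This agrees with the $t_{2}\to0^{+}$ limit of the vertical-edge extension found above and with $\lim_{t_{1}\to+\infty}\phi^{a}(t_{1},0)=\lim_{t_{1}\to+\infty}\gamma^{a}(t_{1},0)=a_{-}I$ (using $\int_{\R}H(y_{1})[H(y_{1})]^{T}\,dy_{1}=I$); the corner $(-\infty,0)$ is symmetric, with value $a_{+}I$.

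Finally I would assemble the pieces: define the extension to be $\phi^{a}$ on $\R\times[0,+\infty)$, the constant $a(0)I$ on $\overline{\R}\times\{+\infty\}$, and $t_{2}\mapsto a\!\left(\mp\tfrac{1}{2\sqrt{t_{2}}}\right)I$ on $\{\pm\infty\}\times\overline{\R}_{+}$. The corner computations show that the two edges meeting at each corner prescribe the same value, so the extension is a well-defined function on $\overline{\Pi}$; and since $\overline{\Pi}$ is metrizable, the region-by-region sequential limits established above, together with the already-known continuity on $\R\times[0,+\infty)$ and the obvious continuity of the two explicit edge formulas, give continuity at every point of $\overline{\Pi}$. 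The constancy along $\overline{\R}\times\{+\infty\}$ was proved in the first paragraph. I expect the only genuinely delicate estimate to be the bottom-corner lower bound displayed above; everything else is routine dominated-convergence bookkeeping.
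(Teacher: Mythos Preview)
Your proof is correct and follows the same overall architecture as the paper: compute the limit of $\phi^{a}$ at each type of boundary point of $\overline{\Pi}$ (bottom edge via Lemma~\ref{lim-gamma-x0-0}, vertical edges, top edge, and the four corners), verify that the resulting boundary values agree at the corners, and conclude continuity of the extension. The limit values you obtain --- $a(0)I$ along the top, $a(\mp 1/(2\sqrt{t_{2}}))I$ on the vertical edges, and $a_{\mp}I$ at the bottom corners --- coincide exactly with those in the paper's Lemmas~\ref{lim-phi-infty-0}, \ref{lim-phi-infty-t0}, \ref{lim-phi-t0-infty}.

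The only genuine methodological difference is in how each individual boundary limit is established. The paper proves its three lemmas by explicit $\epsilon$--$\delta$ arguments: split the integral at carefully chosen cutoffs $\pm s_{0}$, bound the tails by $\epsilon/3$ using integrability of $|h_{j-1}h_{k-1}|$, and control the central piece via the supremum of $|a(\cdot)-a(\text{limit})|$. You instead fix $s_{1}$, track the pointwise limit of the argument $(s_{1}-t_{1})/\rho(t_{1},t_{2})$, invoke continuity of $a$ at that limit, and apply dominated convergence with the single majorant $\|a\|_{\infty}|h_{j-1}h_{k-1}|$. This is shorter and avoids the integral-splitting bookkeeping, at the cost of being sequential; you correctly note that metrizability of $\overline{\Pi}$ makes sequential continuity sufficient. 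Your lower bound $|(s_{1}-t_{1})/\rho|\ge 1/(4\sqrt{2}\sqrt{t_{2}})$ at the bottom corners is the one place where a real estimate is needed, and it plays exactly the role of the paper's more laborious choice of $\delta$ and $N$ in Lemma~\ref{lim-phi-infty-0}.
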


\begin{proof}
The result follows from Lemmas \ref{lim-gamma-x0-0},  \ref{lim-phi-infty-0}, \ref{lim-phi-infty-t0}  and  \ref{lim-phi-t0-infty} below.
\end{proof}

\begin{lem}\label{lim-phi-infty-0}
	Let $a\in L^{\infty}(\R)$, and suppose that $a$ converges at $\pm \infty$. 
	Then $\phi^{a}=(\phi_{jk}^a)$ satisfies
		$$\lim_{(t_{1}, t_{2}) \rightarrow (+\infty, 0)} \phi^{a}(t_{1}, t_{2})= a(-\infty)I.$$
	That is, for $\epsilon >0$ there exists $\delta>0$ and $N>0$ such that $|\phi_{jk}^{a}(t_{1}, t_{2})- \delta_{jk}a(-\infty)|<\epsilon$
	whenever $0< t_{2}< \delta$ and $t_{1}>N$. Analogously,
	$$ \lim_{(t_{1}, t_{2}) \rightarrow (-\infty, 0)} \phi^{a}(t_{1}, t_{2})=a(+\infty)I.$$	
\end{lem}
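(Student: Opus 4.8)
The plan is to argue entrywise from the explicit formula
$$\phi^{a}(t_{1}, t_{2})=\int_{-\infty}^{\infty} a\!\left(\frac{-t_{1}+s_{1}}{2\sqrt{t_{2}(t_{1}^{2} +1)}}\right) H(s_{1})[H(s_{1})]^{T}\,ds_{1},$$
using that $\int_{-\infty}^{\infty} H(s_{1})[H(s_{1})]^{T}\,ds_{1}=I_{n}$ by orthonormality of $\{h_{0},\dots,h_{n-1}\}$, so that for each pair $j,k$ one has $\delta_{jk}a(-\infty)=a(-\infty)\int_{-\infty}^{\infty}h_{j-1}(s_{1})h_{k-1}(s_{1})\,ds_{1}$ and hence
$$\phi_{jk}^{a}(t_{1},t_{2})-\delta_{jk}a(-\infty)=\int_{-\infty}^{\infty}\left[a\!\left(\frac{-t_{1}+s_{1}}{2\sqrt{t_{2}(t_{1}^{2}+1)}}\right)-a(-\infty)\right]h_{j-1}(s_{1})h_{k-1}(s_{1})\,ds_{1}.$$
The key heuristic is that when $t_{1}\to+\infty$ and $t_{2}\to0^{+}$ the argument of $a$ tends to $-\infty$ uniformly for $s_{1}$ in any fixed bounded interval, while the Hermite weight $h_{j-1}h_{k-1}$ has arbitrarily small tails; the proof is then the usual three-$\epsilon$ split, essentially the same bookkeeping as in Lemma~\ref{lim-gamma-x0-0}.

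Concretely, fix $\epsilon>0$ and put $C_{jk}=\int_{-\infty}^{\infty}|h_{j-1}(s_{1})h_{k-1}(s_{1})|\,ds_{1}<\infty$. First I would choose $R>0$ so large that $(\|a\|_{\infty}+|a(-\infty)|)\int_{|s_{1}|>R}|h_{j-1}h_{k-1}|\,ds_{1}<2\epsilon/3$; this controls the tail $|s_{1}|>R$ for every $(t_{1},t_{2})$. Next, since $a(y)\to a(-\infty)$ as $y\to-\infty$, pick $M>0$ with $|a(y)-a(-\infty)|<\epsilon/(3C_{jk})$ for all $y<-M$. It then remains to force $\frac{-t_{1}+s_{1}}{2\sqrt{t_{2}(t_{1}^{2}+1)}}<-M$ for $|s_{1}|\le R$: taking $N=\max\{1,2R\}$, for $t_{1}>N$ we have $-t_{1}+s_{1}\le-t_{1}+R<-t_{1}/2$ and $\sqrt{t_{1}^{2}+1}\le\sqrt{2}\,t_{1}$, so
$$\frac{-t_{1}+s_{1}}{2\sqrt{t_{2}(t_{1}^{2}+1)}}<\frac{-t_{1}/2}{2\sqrt{2}\,t_{1}\sqrt{t_{2}}}=\frac{-1}{4\sqrt{2}\,\sqrt{t_{2}}}<-M$$
as soon as $t_{2}<\delta:=1/(32M^{2})$. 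Hence the integral over $|s_{1}|\le R$ is at most $\frac{\epsilon}{3C_{jk}}\int_{-R}^{R}|h_{j-1}h_{k-1}|\,ds_{1}\le\epsilon/3$, and adding the tail bound gives $|\phi_{jk}^{a}(t_{1},t_{2})-\delta_{jk}a(-\infty)|<\epsilon$ whenever $t_{1}>N$ and $0<t_{2}<\delta$, which is the claimed limit at $(+\infty,0)$.

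For the limit at $(-\infty,0)$ the computation is symmetric: with $t_{1}<-N$ one gets $-t_{1}+s_{1}\ge-t_{1}-R>-t_{1}/2>0$, so the argument of $a$ exceeds $\frac{1}{4\sqrt{2}\,\sqrt{t_{2}}}$, which is $>M$ once $t_{2}<\delta$ (after enlarging $M$, if needed, to also control $|a(y)-a(+\infty)|$ for $y>M$); the tail estimate is unchanged, giving $\phi^{a}(t_{1},t_{2})\to a(+\infty)I$. I expect the only delicate point to be the uniform bound on the argument of $a$ over $s_{1}\in[-R,R]$, i.e.\ choosing $N$ large relative to $R$ so the $s_{1}$-shift is absorbed and invoking $\sqrt{t_{1}^{2}+1}\le\sqrt{2}\,|t_{1}|$ for $|t_{1}|\ge1$; once that bound is in place the argument is routine and needs neither compactness nor dominated convergence.
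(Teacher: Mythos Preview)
Your proof is correct and follows essentially the same approach as the paper's: split off a tail where the Hermite weight is small, then show that on the remaining bounded $s_{1}$-interval the argument of $a$ is forced to $-\infty$. The only cosmetic differences are that the paper first reduces to the case $a(-\infty)=0$ and uses linearity (you subtract $\delta_{jk}a(-\infty)$ directly), the paper cuts only the right tail $[s_{0},\infty)$ (since for $s_{1}\le s_{0}$ the argument already tends to $-\infty$) whereas you cut symmetrically at $\pm R$, and the paper bounds $(t_{1}-s_{0})/\sqrt{t_{1}^{2}+1}$ via its limit $1$ while you use $\sqrt{t_{1}^{2}+1}\le\sqrt{2}\,t_{1}$; none of this changes the substance.
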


\begin{proof}
	Suppose that $a(-\infty)=0$. Let  $\epsilon>0$.
	Since $h_{j-1}(s_{1}) h_{k-1}(s_{1} )\in L^{2}(\R)$, there exists $s_{0}>0$ such that
	$$\|a\|_{\infty} \int_{s_{0}}^{\infty}|h_{j-1}(s_{1}) h_{k-1}(s_{1})|  ds_{1} < \frac{\epsilon}{2}.$$
	Let $C_{jk} = \int_{-\infty}^{\infty} |h_{j-1}(s_{1}) h_{k-1}(s_{1})| dy_1 >0$.  Then we have
		\begin{eqnarray*}
		|\phi_{jk}^{a}(t_{1}, t_{2})| 
		& = & \left| \int_{-\infty}^{\infty} a\left(\frac{-t_{1}+s_{1}}{2\sqrt{t_{2}(t_{1}^{2} +1)}}\right) 
		h_{j-1}(s_{1}) h_{k-1}(s_{1}) ds_{1}  \right|\\
		& \leq & \int_{-\infty}^{s_{0}} \left| a\left(\frac{-t_{1}+s_{1}}{2\sqrt{t_{2}(t_{1}^{2} +1)}}\right) 
		h_{j-1}(s_{1}) h_{k-1}(s_{1}) \right| d{s_{1}}\\
		& & + \int_{s_{0}}^{\infty}\left| a\left(\frac{-t_{1}+s_{1}}{2\sqrt{t_{2}(t_{1}^{2} +1)}}\right)  
		h_{j-1}(s_{1}) h_{k-1}(s_{1}) \right| d{s_{1}}\\
		& \leq &   C_{jk}  \max_{-\infty< s_{1}< s_{0}}\left| a\left( \frac{-t_{1}+s_{1}}{2\sqrt{t_{2}(t_{1}^{2} +1)}} \right)  \right| +
		\frac{ \epsilon}{2}.
	\end{eqnarray*}
	
	Since $a$ converges to zero at $-\infty$, there exists $N_{1}>0$ such that $C_{jk}|a(s)|< \epsilon/2$ for $-s> N_{1}$. 
	Take $\delta= 1/(16N_{1}^{2})$. Then we have  $\frac{1}{2\sqrt{t_2}} > 2N_1$ for $0<t_2<\delta$.
	On the other hand, 	assume  $t_{1}> s_0$ 	and $-\infty < s_1 < s_{0}$.  Then
	$$\frac{t_{1}- s_{1}}{\sqrt{t_{1}^{2} +1}} > \frac{t_{1}- s_{0}}{\sqrt{t_{1}^{2} +1}}.$$
	The right-hand side of this inequality converges to $1$ when 
	$t_1$ tends to $+\infty$, thus there exists
	$N_2>s_0$ such that $(t_{1}- s_{0})/\sqrt{t_{1}^{2} +1}>1/2$ for $t_1>N_2$. Consequently,
	$$N_1=2N_1\frac{1}{2} < \frac{1}{2\sqrt{t_2}} \frac{t_{1}- s_{0}}{\sqrt{(t_{1}^{2} +1)}} <
	\frac{t_{1}-s_{1}}{2\sqrt{t_{2}(t_{1}^{2} +1)}}. $$
	For $0<t_2<\delta$ and $t_1>N:=\max\{ s_0,N_2 \}$ we have
	$$C_{jk}\left| a\left(\frac{-t_{1}+s_{1}}{2\sqrt{t_{2}(t_{1}^{2} +1)}}\right)  \right| < \frac{\epsilon}{2}.$$
	
	We define $\hat{a}(s)=a(s)-a_2$ in the case $a(-\infty) \neq 0$, where $a_2:=a(-\infty)$. 
 Note that $\hat{a}$  converges to zero at $-\infty$, 
	and $\phi^{a_1+a_2}=\phi^{a_1}+\phi^{a_2}$ for any nilpotent symbols $a_1$ and $a_2$.
	Then
	\begin{eqnarray*}
	\lim_{(t_{1}, t_{2}) \rightarrow (+\infty, 0)} \phi_{jk}^{a}(t_{1}, t_{2})
	& = & \lim_{(t_{1}, t_{2}) \rightarrow (+\infty, 0)} \phi_{jk}^{\hat{a}+a_2}(t_{1}, t_{2}) \\
	& = &  \lim_{(t_{1}, t_{2}) \rightarrow (+\infty, 0)} \phi_{jk}^{\hat{a}}(t_{1}, t_{2})
	+
	a_2 \int_{-\infty}^{\infty} h_{j-1}(s_{1}) h_{k-1}(s_{1}) ds_{1}\\
	& = & a(-\infty)\delta_{jk}.
\end{eqnarray*}

Finally, the limit of $\phi^a$ at $(-\infty,0)$ can be proved analogously. 
\end{proof}

\begin{lem}\label{lim-phi-infty-t0}
	Let $t_{0} \in \R_+$. If $a\in L^{\infty}(\R)$ is continuous at $-1/(2\sqrt{t_0})$, 
	then  the spectral matrix-valued function $\phi^{a}$ satisfies
	$$\lim_{(t_{1}, t_{2}) \rightarrow (+\infty, t_{0})} \phi^{a}(t_{1}, t_{2})= a\left(-\frac{1}{2\sqrt{t_{0}}}\right)I.$$
	Analogously, if $a$ is continuous at $1/(2\sqrt{t_0})$,  then
	$$ \lim_{(t_{1}, t_{2}) \rightarrow (-\infty, t_{0})} \phi^{a}(t_{1}, t_{2})=a\left(\frac{1}{2\sqrt{t_{0}}}\right)I.$$	
\end{lem}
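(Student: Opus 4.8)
The plan is to pass to the scalar entries $\phi_{jk}^{a}$ (since convergence of $\phi^{a}$ to $a(p)I$ is just entrywise convergence to $\delta_{jk}a(p)$) and to argue exactly as in Lemma~\ref{lim-gamma-x0-0}, using that $h_{j-1}h_{k-1}\in L^1(\R)$ with $\int_{\R}h_{j-1}(s_1)h_{k-1}(s_1)\,ds_1=\delta_{jk}$. Write $p:=-1/(2\sqrt{t_{0}})$. Then
$$\phi_{jk}^{a}(t_1,t_2)-\delta_{jk}\,a(p)=\int_{\R}\left[a\!\left(\frac{-t_{1}+s_{1}}{2\sqrt{t_{2}(t_{1}^{2}+1)}}\right)-a(p)\right]h_{j-1}(s_{1})h_{k-1}(s_{1})\,ds_{1},$$
so it suffices to make the right-hand side small. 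Fix $\epsilon>0$ and, using $h_{j-1}h_{k-1}\in L^2(\R)$, choose $M>0$ with $2\|a\|_{\infty}\int_{|s_1|>M}|h_{j-1}(s_1)h_{k-1}(s_1)|\,ds_1<\epsilon/2$; this bounds the part of the integral over $\{|s_1|>M\}$ by $\epsilon/2$ uniformly in $(t_1,t_2)$.

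For the part over $[-M,M]$ the key observation is that the argument of $a$ tends to $p$ uniformly in $s_{1}\in[-M,M]$ as $(t_{1},t_{2})\to(+\infty,t_{0})$. Indeed, decompose
$$\frac{-t_{1}+s_{1}}{2\sqrt{t_{2}(t_{1}^{2}+1)}}=\frac{s_{1}}{2\sqrt{t_{2}}\,\sqrt{t_{1}^{2}+1}}-\frac{1}{2\sqrt{t_{2}}}\cdot\frac{t_{1}}{\sqrt{t_{1}^{2}+1}}.$$
Once $t_{2}$ is near $t_{0}>0$ we have $\sqrt{t_{2}}\geq\sqrt{t_{0}/2}$, so the first term is at most $M\big/\big(2\sqrt{t_{0}/2}\,\sqrt{t_{1}^{2}+1}\big)\to0$ as $t_{1}\to+\infty$, uniformly on $[-M,M]$; meanwhile $t_{1}/\sqrt{t_{1}^{2}+1}\to1$ and $1/(2\sqrt{t_{2}})\to1/(2\sqrt{t_{0}})$, so the second term tends to $1/(2\sqrt{t_{0}})$ and the whole expression tends to $p$. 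Hence there is a neighbourhood of $(+\infty,t_{0})$ on which that argument stays within the distance $\eta$ for which continuity of $a$ at $p$ gives $|a(s)-a(p)|<\epsilon/(2C_{jk})$ whenever $|s-p|<\eta$, where $C_{jk}=\int_{\R}|h_{j-1}(s_1)h_{k-1}(s_1)|\,ds_1$; so the $[-M,M]$ contribution is at most $\epsilon/2$. Adding the two estimates yields $|\phi_{jk}^{a}(t_{1},t_{2})-\delta_{jk}a(p)|<\epsilon$ for every $j,k$, which is the first limit.

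The second limit, at $(-\infty,t_{0})$, follows from the identical computation, except that $t_{1}\to-\infty$ forces $t_{1}/\sqrt{t_{1}^{2}+1}\to-1$, so the argument of $a$ converges uniformly on compacts to $+1/(2\sqrt{t_{0}})$; invoking continuity of $a$ there gives $\phi^{a}(t_{1},t_{2})\to a\!\left(1/(2\sqrt{t_{0}})\right)I$. The only point needing care — and the reason continuity of $a$ at just \emph{one} point suffices — is keeping $\sqrt{t_{2}}$ bounded above and away from $0$ near the limit point, which is automatic because $t_{0}>0$; granted that, the estimate is a routine split-the-integral argument modelled on Lemma~\ref{lim-gamma-x0-0}.
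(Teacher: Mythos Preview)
Your proof is correct and follows essentially the same route as the paper's: split the integral into tails (bounded via $\|a\|_\infty$ and integrability of $h_{j-1}h_{k-1}$) and a compact middle where the argument of $a$ is shown to converge uniformly to $p=\mp 1/(2\sqrt{t_0})$ via the same decomposition $\frac{-t_1+s_1}{2\sqrt{t_2(t_1^2+1)}}=\frac{s_1}{2\sqrt{t_2}\sqrt{t_1^2+1}}-\frac{1}{2\sqrt{t_2}}\cdot\frac{t_1}{\sqrt{t_1^2+1}}$. The only cosmetic difference is that the paper first reduces to the case $a(p)=0$ and then handles the general case by writing $a=\hat a+a(p)$, whereas you subtract $\delta_{jk}a(p)$ directly using $\int_{\R}h_{j-1}h_{k-1}=\delta_{jk}$; the estimates are otherwise identical.
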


\begin{proof}
	Suppose that $a$ converges to zero at $-1/(2\sqrt{t_{0}})$. 
	Let $\epsilon >0$.	Since $h_{j-1}(s_{1}), h_{k-1}(s_{1}) \in L^{2}(\R, ds_{1})$, there exists $s_{0}>0$  such that
	\begin{equation} \label{acotar-colas-4}
	\|a\|_{\infty}\int_{-\infty}^{-s_{0}}|h_{j-1}(s_{1}) h_{k-1}(s_{1})|ds_{1} < \frac{\epsilon}{3}, \quad
	\|a\|_{\infty}\int_{s_{0}}^{\infty} |h_{j-1}(s_{1}) h_{k-1}(s_{1})| ds_{1} < \frac{\epsilon}{3}.
\end{equation}
	Take into account $C_{jk}=\int_{-\infty}^{\infty}|h_{j-1}(s_{1}) h_{k-1}(s_{1})|ds_{1}>0$ in the following computation
	\begin{eqnarray*}
	|\phi_{jk}^{a}(t_{1}, t_{2})|
	& \leq & \int_{-\infty}^{-s_{0}} 
	\left| a\left(\frac{-t_{1}+s_{1}}{2\sqrt{t_{2}(t_{1}^{2} +1)}}\right) \right| |h_{j-1}(s_{1}) h_{k-1}(s_{1})|  d{s_{1}} \\
	& & + \int_{-s_{0}}^{s_{0}}\left| a\left(\frac{-t_{1}+s_{1}}{2\sqrt{t_{2}(t_{1}^{2} +1)}}\right) \right|   
	|h_{j-1}(s_{1}) h_{k-1}(s_{1})|d{s_{1}}\\
	& & + \int_{s_{0}}^{\infty} \left| a\left(\frac{-t_{1}+s_{1}}{2\sqrt{t_{2}(t_{1}^{2} +1)}}\right) \right|    
	|h_{j-1}(s_{1}) h_{k-1}(s_{1})| d{s_{1}}\\
	& < &   \frac{2 \epsilon}{3}  +
	C_{jk} \max_{-s_{0}< s_{1}< s_{0}}\left| a\left(\frac{-t_{1}+s_{1}}{2\sqrt{t_{2}(t_{1}^{2} +1)}}\right) \right|. 
\end{eqnarray*}
		
	Because of the continuity of $a$  at $-1/(2\sqrt{t_0})$, there exists $\delta_{1}>0$ such that
	$C_{jk}|a(s)|< \epsilon/3$ for $|s-\frac{-1}{2\sqrt{t_{0}}}|< \delta_{1}$. 
	Let us estimate the value of the  argument of $a$:
	\begin{eqnarray*}
		I&:=& \left|\frac{1}{2\sqrt{t_{2}(t_{1}^{2} +1)}}(-t_{1}+s_{1})- \frac{-1}{2\sqrt{t_{0}}} \right| \\
		& \leq&
		\left|- \frac{1}{2\sqrt{t_{2}}}+ \frac{1}{2\sqrt{t_{0}}} \right|\left| \frac{t_{1}}{\sqrt{t_{1}^{2} + 1}}  \right| 
		+\frac{1}{2\sqrt{t_{0}}} \left| 1- \frac{t_{1}}{\sqrt{t_{1}^{2} + 1}}  \right|  \\
		& & + \left| \frac{s_{1}}{2\sqrt{t_{2}(t_{1}^{2} + 1)}} \right|. 
	\end{eqnarray*}
	Choose $\delta >0$ such that
	$\left|- \frac{1}{2\sqrt{t_{2}}}+ \frac{1}{2\sqrt{t_{0}}} \right| < \delta_{1} /3$ for $|t_{2}- t_{0}|<\delta$. 
	Pick $N_{1} >0$ such that
	$\left|1-\frac{t_{1}}{\sqrt{t_{1}^{2} + 1}}\right| < (2\sqrt{t_{0}}\delta_{1}) /3$  whenever $t_{1} > N_{1}$.
	Now assume that $|t_2-t_0|<\delta$ and $|s_{1}| < s_{0}$. Then $|\frac{1}{2\sqrt{t_{2}}}| < \frac{1}{2\sqrt{t_{0}}} + \frac{\delta_{1}}{3}$. 
	Thus,  $\frac{|s_{1}|}{2\sqrt{t_{2}(t_{1}^{2} + 1 )}}$ 
	converges to $0$ when $t_{1}$ tends to $+\infty$. Therefore, there exists $N>N_1$ such that
	$\frac{|s_{1}|}{2\sqrt{t_{2}(t_{1}^{2} + 1 )}}< \delta_{1} /3$ for	$t_{1}> N$.
	The additional condition $t_1>N$ implies
	$$C_{jk} \left| a\left(\frac{-t_{1}+s_{1}}{2\sqrt{t_{2}(t_{1}^{2} +1)}}\right) \right| < \epsilon /3.$$
	Hence $|\phi_{jk}^{a}(t_{1}, t_{2})| < \epsilon$ if $|t_2-t_0|<\delta$ and $t_1>N$.
	
	If $a$  does not converge to zero at $-\frac{1}{2\sqrt{t_{0}}}$, then take the function
	$\hat{a}(s)=a(s)-a_2$ and proceed as in the proof of Lemma \ref{lim-phi-infty-0}, where $a_2=a\left(-\frac{1}{2\sqrt{t_{0}}}\right)$. 
	
Finally, the justification of the limit  of $\phi^a$ at $(-\infty,t_0)$ can be done analogously. 
\end{proof}

\begin{lem}\label{lim-phi-t0-infty}
	Let $a\in L^{\infty}(\R)$ be continuous at $0\in \R$. For $t_{0} \in \overline{\R}$, 
	 the spectral matrix-valued function  $\phi^{a}$ satisfies
	$$\lim_{(t_{1}, t_{2}) \rightarrow (t_{0}, +\infty)} \phi^{a}(t_{1}, t_{2})= a(0)I.$$
	Actually, we have uniform convergence of $\phi^a(t_1,t_2)$ at $(t_0,+\infty)$; t¨hat is,  for $\epsilon>0$, 
	there exists $N>0$ such that $|\phi_{jk}^{a}(t_{1}, t_{2})-a(0)|<\epsilon$ for all $t_2>N$ and for all $t_1\in \overline{\R}$. 
\end{lem}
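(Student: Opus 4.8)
The plan is to work entrywise and to exploit the fact that the parabolic change of variables hidden in $\phi^a=\gamma^a\circ\Phi^{-1}$ makes the argument of $a$ uniformly small in $t_1$ once $t_2$ is large. First I would use that $\{h_0,\dots,h_{n-1}\}$ is orthonormal in $L^2(\R)$, so that $\int_{\R}H(s_1)[H(s_1)]^T\,ds_1=I$ and therefore
$$\phi^{a}(t_1,t_2)-a(0)I=\int_{-\infty}^{\infty}\Big[a\Big(\tfrac{-t_1+s_1}{2\sqrt{t_2(t_1^2+1)}}\Big)-a(0)\Big]H(s_1)[H(s_1)]^T\,ds_1 .$$
It then suffices to estimate each scalar entry
$$\phi^{a}_{jk}(t_1,t_2)-\delta_{jk}a(0)=\int_{-\infty}^{\infty}\Big[a\Big(\tfrac{-t_1+s_1}{2\sqrt{t_2(t_1^2+1)}}\Big)-a(0)\Big]h_{j-1}(s_1)h_{k-1}(s_1)\,ds_1 .$$

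The crucial observation I would record next is the uniform bound
$$\Big|\frac{-t_1+s_1}{2\sqrt{t_2(t_1^2+1)}}\Big|\le\frac{|t_1|}{2\sqrt{t_2}\,\sqrt{t_1^2+1}}+\frac{|s_1|}{2\sqrt{t_2}\,\sqrt{t_1^2+1}}\le\frac{1+|s_1|}{2\sqrt{t_2}},$$
valid for every $s_1\in\R$ and every $(t_1,t_2)\in\Pi$, because $|t_1|/\sqrt{t_1^2+1}\le1$ and $\sqrt{t_1^2+1}\ge1$. The point is that the right-hand side does not involve $t_1$ at all; this is precisely what the division by $t_1^2+1$ in $\Phi$ is there to provide, and it is what will yield both the limit and its uniformity in $t_1$.

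With this in hand the rest is a standard tails-plus-continuity splitting. Fix $\epsilon>0$ and put $C_{jk}=\int_{-\infty}^{\infty}|h_{j-1}(s_1)h_{k-1}(s_1)|\,ds_1$. Since $h_{j-1}h_{k-1}\in L^1(\R)$, I would choose $s_0>0$ with $\|a\|_\infty\int_{|s_1|>s_0}|h_{j-1}(s_1)h_{k-1}(s_1)|\,ds_1<\epsilon/4$, so that the part of the integral over $|s_1|>s_0$ is at most $2\|a\|_\infty\int_{|s_1|>s_0}|h_{j-1}h_{k-1}|\,ds_1<\epsilon/2$. Using continuity of $a$ at $0$, pick $\eta>0$ with $|a(s)-a(0)|<\epsilon/(2C_{jk})$ for $|s|<\eta$, and then $N>0$ with $(1+s_0)/(2\sqrt{N})<\eta$. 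For $t_2>N$ and $|s_1|\le s_0$ the displayed bound gives $\big|\tfrac{-t_1+s_1}{2\sqrt{t_2(t_1^2+1)}}\big|<\eta$, whence the integrand over $|s_1|\le s_0$ is dominated by $\tfrac{\epsilon}{2C_{jk}}|h_{j-1}h_{k-1}|$ and contributes at most $\epsilon/2$. Adding the two pieces, $|\phi^{a}_{jk}(t_1,t_2)-\delta_{jk}a(0)|<\epsilon$ for all $t_2>N$ and all $t_1\in\R$; since $N$ depends on neither $t_1$ nor $t_0$, this is the asserted uniform convergence and in particular $\phi^a\to a(0)I$ at $(t_0,+\infty)$ for every $t_0\in\overline{\R}$ (the cases $t_0=\pm\infty$ being covered by the same $t_1$-free estimate).

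The only genuine subtlety — and the single step I would be careful about — is the uniformity in $t_1$: one must resist bounding $|-t_1+s_1|$ by $|t_1|+|s_1|$ and stopping there, since the resulting term $|t_1|/\sqrt{t_2(t_1^2+1)}$ is not visibly small; the gain comes from cancelling the $|t_1|$ against $\sqrt{t_1^2+1}$, after which the bound depends on $t_2$ alone. Everything else is routine and parallels the tail estimates already used in Lemmas \ref{lim-phi-infty-0} and \ref{lim-phi-infty-t0}.
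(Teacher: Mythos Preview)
Your proof is correct and follows essentially the same route as the paper's own proof: both split the integral into a tail $|s_1|>s_0$ handled by integrability of $h_{j-1}h_{k-1}$, and a central part $|s_1|\le s_0$ handled via the key uniform bound $\big|\tfrac{-t_1+s_1}{2\sqrt{t_2(t_1^2+1)}}\big|\le\tfrac{1+s_0}{2\sqrt{t_2}}$, which is exactly the inequality the paper records. The only cosmetic difference is that the paper first reduces to the case $a(0)=0$ by replacing $a$ with $\hat a=a-a(0)$, whereas you subtract $a(0)I$ directly using $\int_{\R}H(s_1)[H(s_1)]^T\,ds_1=I$ and carry the factor $a(\cdot)-a(0)$ through the estimates; the two amount to the same thing.
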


\begin{proof}
	Suppose that $a(0)=0$. Let $\epsilon >0$, and choose 
	$s_{0}>0$ such that equations  (\ref{acotar-colas-4}) hold. Let $C_{jk} = \int_{-\infty}^{\infty} |h_{j-1}(s_{1}) h_{k-1}(s_{1}) | ds_{1} >0$.
	Then
	\begin{eqnarray*}
	|\phi_{jk}^{a}(t_{1}, t_{2})|
	& \leq & \int_{-\infty}^{-s_{0}} 
	\left| a\left(\frac{-t_{1}+s_{1}}{2\sqrt{t_{2}(t_{1}^{2} +1)}}\right) \right||h_{j-1}(s_{1}) h_{k-1}(s_{1})| d{s_{1}} \\
	& &+ \int_{-s_{0}}^{s_{0}}\left| a\left(\frac{-t_{1}+s_{1}}{2\sqrt{t_{2}(t_{1}^{2} +1)}}\right) \right|   
	|h_{j-1}(s_{1}) h_{k-1}(s_{1})|d{s_{1}}\\
	& &+ \int_{s_{0}}^{\infty} \left| a\left(\frac{-t_{1}+s_{1}}{2\sqrt{t_{2}(t_{1}^{2} +1)}}\right) \right|    
	|h_{j-1}(s_{1}) h_{k-1}(s_{1})| d{s_{1}}\\
	& < &    \frac{2 \epsilon}{3}  +
	C_{jk} \max_{-s_{0}< s_{1}< s_{0}}\left| a\left(\frac{-t_{1}+s_{1}}{2\sqrt{t_{2}(t_{1}^{2} +1)}}\right) \right|.
\end{eqnarray*}
	
	By the continuity  of $a$ at $0$, there exists $\delta_{1}>0$ such that $|a(s)|< \epsilon/(3C_{jk})$ for 
	$|s| < \delta_{1}$. For  $ -s_0 <s_1 <s_0$ we have
	\begin{eqnarray*}
		\left| \frac{-t_{1}+s_{1}}{2\sqrt{t_{2}(t_{1}^{2} +1)}} \right| &\leq& 
		\frac{1}{2\sqrt{t_{2}}} \left( \left|\frac{t_{1}}{\sqrt{t_{1}^{2} + 1}}\right| +\frac{|s_{1}|}{\sqrt{t_{1}^{2} + 1}} \right)
		 <   \frac{1}{2\sqrt{t_{2}}}(1+s_0).
	\end{eqnarray*}

	Take $N=(1+s_{0})^{2}/(4 \delta_{1}^{2})$. The inequality  $t_2>N$ implies
	$\frac{1}{2\sqrt{t_2}}<\frac{\delta_{1}}{(1+s_0)}$. Thus, 
  if $t_{2} >N$, $t_{1} \in \overline{\R}$ and $-s_{0}<s_1 < s_{0}$, then
	$$\left| \frac{-t_{1}+s_{1}}{2\sqrt{t_{2}(t_{1}^{2} +1)}} \right| < \delta_{1}.$$
Consequently, 	$|\phi_{jk}^{a}(t_{1}, t_{2})|<\epsilon$ for all $t_2>N$ and $t_1 \in \overline{\R}$.
	
	Finally, in the case $a(0) \neq 0$, the proof can be carry out by considering the symbol  $\hat{a}(s)=a(s)-a(0)$.
\end{proof}

For each nilpotent symbol $a\in C(\overline{\R})$, the spectral function $\phi^{a}$ is continuous on $\overline{\Pi}$  and is constant along $\overline{\R}\times \{+\infty\}$.
 In order to obtain a larger algebra, we now consider symbols $a\in PC(\overline\R,\{0\})$, where $PC(\overline{\R}, \{0\})$ is the set of continuous functions on $\overline{\R}$ with one-sided limits at $0$.

	Consider the indicator function
$\chi_+=\chi_{[0, +\infty]}$, for which
\begin{equation}\label{ind.matriz}
	\phi^{\chi_+}(t_{1}, t_{2})=
	\int_{t_1}^{\infty} 	H(s_{1})[H(s_{1})]^T  ds_{1}.
\end{equation}

\begin{thm}
	Let  $a \in PC(\overline{\R}, \{0\})$. Then  the spectral matrix-valued function $\phi^a$ can be  extended continuously to  $\overline{\Pi}$.
\end{thm}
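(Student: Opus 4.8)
The plan is to reduce the piecewise-continuous case to the continuous one, which has already been settled, by splitting off the jump of $a$ at the origin. Given $a\in PC(\overline{\R},\{0\})$, let $a(0^-)$ and $a(0^+)$ be its one-sided limits at $0$, put $\lambda:=a(0^+)-a(0^-)$, and set $b:=a-\lambda\chi_+$ with $\chi_+=\chi_{[0,+\infty]}$. Comparing one-sided limits at $0$ gives $b(0^-)=a(0^-)$ and $b(0^+)=a(0^+)-\lambda=a(0^-)$, so $b$ extends continuously across $0$; and since $\chi_+$ is continuous on $\overline{\R}\setminus\{0\}$ with finite limits $0$ at $-\infty$ and $1$ at $+\infty$, it follows that $b\in C(\overline{\R})$. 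Thus $a=b+\lambda\chi_+$ with $b\in C(\overline{\R})$ and $\lambda\in\C$. Using the additivity $\phi^{a_1+a_2}=\phi^{a_1}+\phi^{a_2}$ recorded in the proof of Lemma \ref{lim-phi-infty-0}, we get $\phi^{a}=\phi^{b}+\lambda\,\phi^{\chi_+}$ on $\Pi$, and it suffices to show that each summand extends continuously to $\overline{\Pi}$.

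For $\phi^{b}$ this is exactly the theorem just proved for continuous symbols (via Lemmas \ref{lim-gamma-x0-0}, \ref{lim-phi-infty-0}, \ref{lim-phi-infty-t0}, \ref{lim-phi-t0-infty}). For $\phi^{\chi_+}$ the decisive point is that, by (\ref{ind.matriz}),
\[
\phi^{\chi_+}(t_1,t_2)=\int_{t_1}^{\infty}H(s_1)[H(s_1)]^T\,ds_1=:g(t_1),
\]
which is independent of $t_2$. Each entry of the integrand is a product $h_{j-1}h_{k-1}$, which lies in $L^1(\R)$ by Cauchy--Schwarz, so $g$ is continuous on $\R$; moreover $\lim_{t_1\to+\infty}g(t_1)=0$ (vanishing tails of an $L^1$ function) and, by orthonormality of the Hermite functions, $\lim_{t_1\to-\infty}g(t_1)=\int_{-\infty}^{\infty}H(s_1)[H(s_1)]^T\,ds_1=I$. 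Hence $g$ extends to a continuous map $\overline{g}:\overline{\R}\to M_n(\C)$, and because $\phi^{\chi_+}$ depends only on $t_1$, the assignment $(t_1,t_2)\mapsto\overline{g}(t_1)$ is a continuous extension of $\phi^{\chi_+}$ to all of $\overline{\Pi}=\overline{\R}\times\overline{\R}_+$, with continuity at the corner points $(\pm\infty,0)$ and $(\pm\infty,+\infty)$ being automatic. Adding the two pieces proves the theorem.

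The only step that needs genuine care is the choice of $b$: subtracting $\lambda\chi_+$ must simultaneously cancel the jump at $0$ and leave the boundary behaviour at $\pm\infty$ continuous and finite, so that $b$ really belongs to $C(\overline{\R})$ and the continuous-symbol theorem applies; everything after that is bookkeeping, since $\phi^{\chi_+}$ is constant in $t_2$. It is worth noting that, unlike in the continuous case, $\phi^{a}$ need not be constant along $\overline{\R}\times\{+\infty\}$: there it equals $a(0^-)I+\lambda\,\overline{g}$, running from $a(0^+)I$ at $(-\infty,+\infty)$ to $a(0^-)I$ at $(+\infty,+\infty)$.
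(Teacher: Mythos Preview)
Your proof is correct and follows essentially the same route as the paper: you decompose $a=b+\lambda\chi_+$ with $b\in C(\overline{\R})$ and $\lambda=a(0^+)-a(0^-)$, invoke the continuous-symbol theorem for $\phi^b$, and note from (\ref{ind.matriz}) that $\phi^{\chi_+}$ depends only on $t_1$ and hence extends continuously to $\overline{\Pi}$. You supply more detail than the paper does (it simply asserts that $\phi^{\chi_+}$ is ``obviously continuous on $\overline{\Pi}$''), and your closing remark about the behaviour along $\overline{\R}\times\{+\infty\}$ is a correct bonus observation.
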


\begin{proof}
		For $a \in PC(\overline{\R},\{0 \})$ we have
	$$a(s)=\hat{a}(s)+[a(0_+)-a(0_-)]\chi_+(s),$$
	where $a(0_-)$ and  $a(0_+)$ are the one-side limits of $a$ at $0$, and $\hat{a}(s)= a(s)+[a(0_-)-a(0_+)]\chi_+(s)$. 
	Since $\hat{a} \in C(\overline{\R})$, the spectral function $\phi^{\hat{a}}$ is continuous on $\overline{\Pi}$.
	According to (\ref{ind.matriz}), $\phi^{\chi_+}$ is obviously continuous on $\overline{\Pi}$. 
\end{proof}

The spectral matrix-valued function $\phi^{\chi_+}$ depends only the real variable $t_1$;
thus it can be identified with the one-variable function
\begin{equation}\label{phi+}
\phi^{+}(t):=\int^{\infty}_{t} 	H(s)[H(s)]^T ds .
\end{equation}

\begin{lem}\label{propiedades}
	The matrix-valued function  $\phi^{+}=(\phi^{+}_{jk})$ satisfies:
	\begin{enumerate}
		\item[(1)]  $\phi^{+}(-\infty)= I $ and   $\phi^{+}(+\infty)= 0$.
		\item[(2)]  For each  $t \in \R$,  $\phi^{+}(t)$ is symmetric positive definite and
		$\|\phi^{+}(t)\| \leq 1$, where $\| \cdot\|$ is the uniform norm.
		\item[(3)] There exists $C \in M_n(\mathbb{C})$ such that for all $t\in \overline{\R}$, one has that $\phi^{+}(t)=C M_{t} C^T$, 
		where $C \in M_n(\mathbb{C})$ and
		$$M_{t}= \int_{t}^{\infty} e^{-s^2} SS^T   ds, \quad  S=(1, s, ..., s^{n-1})^T.¨$$
		\item[(4)] For each $t \in \R$ and  $\lambda \in \C$,  then $\det (\lambda I -  \phi^{+}(t))= 0$ 
		if and only if $\det (\lambda M_{-\infty}- M_{t})=0$.
	\end{enumerate}
\end{lem}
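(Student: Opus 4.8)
The plan is to treat the four items in order, deducing (4) algebraically from (3). Items (1) and (2) are direct from the definition $\phi^{+}(t)=\int_{t}^{\infty}H(s)[H(s)]^{T}\,ds$. For (1): $\phi^{+}(+\infty)$ is an integral over the empty interval, hence $0$; and the $(j,k)$-entry of $\phi^{+}(-\infty)$ equals $\int_{\R}h_{j-1}(s)h_{k-1}(s)\,ds=\delta_{jk}$ by orthonormality of the Hermite functions, so $\phi^{+}(-\infty)=I$. For (2): symmetry is immediate since every integrand $H(s)[H(s)]^{T}$ is a symmetric rank-one matrix. For $v\in\C^{n}$ one gets $v^{*}\phi^{+}(t)v=\int_{t}^{\infty}\bigl|\sum_{j}v_{j}h_{j-1}(s)\bigr|^{2}\,ds\ge 0$, and this is $0$ only if the function $\sum_{j}v_{j}h_{j-1}(s)$ --- a polynomial of degree $<n$ times $e^{-s^{2}/2}$ --- vanishes a.e.\ on $(t,\infty)$, which forces $v=0$ by real-analyticity; hence $\phi^{+}(t)$ is positive definite. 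Likewise $\phi^{+}(-\infty)-\phi^{+}(t)=\int_{-\infty}^{t}H(s)[H(s)]^{T}\,ds\ge 0$, so $0\le\phi^{+}(t)\le\phi^{+}(-\infty)=I$ in the Loewner order and therefore $\|\phi^{+}(t)\|\le 1$.

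For (3) I would use the factorization of the Hermite functions, $h_{m}(s)=p_{m}(s)e^{-s^{2}/2}$ with $p_{m}$ a real polynomial of degree exactly $m$. Reading off the coefficients produces a real, lower-triangular, invertible matrix $C=(C_{jl})_{j,l=1}^{n}$, whose diagonal entries are the nonzero leading coefficients of $p_{0},\dots,p_{n-1}$, such that $H(s)=e^{-s^{2}/2}CS$ with $S=(1,s,\dots,s^{n-1})^{T}$. Then $H(s)[H(s)]^{T}=e^{-s^{2}}C\,SS^{T}C^{T}$, and integrating over $[t,\infty)$ gives $\phi^{+}(t)=CM_{t}C^{T}$ for all $t\in\overline{\R}$.

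Item (4) then follows formally: evaluating (3) at $t=-\infty$ and using (1) gives $I=CM_{-\infty}C^{T}$, hence
$$\lambda I-\phi^{+}(t)=\lambda\,CM_{-\infty}C^{T}-CM_{t}C^{T}=C\bigl(\lambda M_{-\infty}-M_{t}\bigr)C^{T},$$
so $\det(\lambda I-\phi^{+}(t))=(\det C)^{2}\det(\lambda M_{-\infty}-M_{t})$; since $\det C\neq 0$, the two determinants vanish for exactly the same $\lambda$.

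None of this is deep; the only points needing a little care are the strict positivity in (2) --- i.e.\ that a nontrivial linear combination of $h_{0},\dots,h_{n-1}$ cannot vanish on a set of positive measure --- and correctly bookkeeping the triangular change-of-basis matrix $C$ in (3), which is just the matrix of normalized Hermite-polynomial coefficients. I expect the write-up to be short.
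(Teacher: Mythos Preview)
Your proposal is correct and follows essentially the same route as the paper: orthonormality of the Hermite functions for (1), the quadratic-form computation $v^{*}\phi^{+}(t)v=\int_{t}^{\infty}\bigl|\sum_{j}v_{j}h_{j-1}(s)\bigr|^{2}\,ds$ with the ``nonzero polynomial times Gaussian'' argument for (2), the lower-triangular coefficient matrix $C$ of the normalized Hermite polynomials to get $H(s)=e^{-s^{2}/2}CS$ for (3), and then the algebraic identity $\lambda I-\phi^{+}(t)=C(\lambda M_{-\infty}-M_{t})C^{T}$ for (4). The only cosmetic difference is that the paper bounds $\langle\phi^{+}(t)v,v\rangle<1$ directly rather than phrasing it via the Loewner order, which is equivalent.
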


\begin{proof}
	Part (1) follows since $\{h_j\}_{j=0}^{\infty}$ is an orthonormal basis for $L_2(\mathbb{R})$.
	The matrix $\phi^{+}(t) $ is symmetric for all $t$ because of $H(s) H(s)^T$ is symmetric for all $s$.
	Let $v \in \C^n$ be an unit vector, then
	\begin{equation}\label{positiva}
		\langle\phi^{+}(t) v,v \rangle =\int_{t}^{\infty} |\langle H(s), v \rangle|^2 ds,
	\end{equation}
	where $e^{s^2}|\langle H(s), v \rangle|^2$ is a nonzero polynomial,
	thus $\langle\phi^{+}(t) v,v \rangle>0$.
	Now, we note that 
	\begin{equation*}
		\langle\phi^{+}(t) v,v \rangle 
		<\int_{-\infty}^{\infty} |\langle H(s), v \rangle|^2 ds = \langle\phi^{+}(-\infty) v,v \rangle 
		=\langle I v,v \rangle =1,
	\end{equation*}
	hence $\|\phi^{+}(t)\| \leq 1$. This proves $(2)$.
	
	The Hermite function is given by
	\begin{align*}
		h_k (s) & = e^{-s^2/2}  \sum_{m=0}^{[k/2]} d_{km} \ s^{k-2m}, \quad 
		d_{km}=  \frac{1}{\sqrt{2^k k!\sqrt{\pi} }}  \frac{(-1)^m k! 2^{k-2m} }{m!(k-2m)!} \\
		& = e^{-s^2/2}  \sum_{m=0}^{k} c_{km} \ s^{m},
	\end{align*}
	and define
	$$C= \left( \begin{array}{ccccc}
		c_{00}     &  0            &   \cdots & 0\\
		c_{10}      &   c_{11}    &   \cdots & 0 \\
		\vdots      & \vdots      &   \ddots& \vdots \\
		c_{n-1,0} & c_{n-1,1} &  \cdots &   c_{n-1,n-1}
	\end{array}
	\right).$$
	Then  $H(s)=(h_0(s), ..., h_{n-1}(s))^T = e^{-s^2 / 2} CS$ and
	$$H(s)H(s)^T = e^{-s^2}CS(CS)^T= e^{-s^2}CSS^T C^T.$$
	Therefore,
	$\phi^{+}(t)= C M_{t} C^T.$
	Also, $\det C \neq 0$ since $C$ is a lower triangular matrix and the scalars $c_{jj}$ are nonzero. This proves $(3)$.
	
	Finally, let $\lambda \in \C$. We have $I=\phi^{+}(-\infty)=CM_{-\infty}C^T$. Then
	\begin{eqnarray*}
		\lambda I - \phi^{+}(t)  &=& \lambda C M_{-\infty} C^T - C M_{t} C^T\\
		&=& C(\lambda M_{-\infty}- M_{t})C^T.
	\end{eqnarray*}
	Thus $\det (\lambda I -  C M_{t} C^T)= 0$ if and only if
	$\det (\lambda M_{-\infty}- M_{t})=0$.
\end{proof}


\subsection{The algebra generated by the Toeplitz operator $T_{\chi^+}$.}\label{separar-arriba}

The $C^*$-algebra generated by $T_{\chi^+}$ is isomorphic to the $C^*$-algebra
generated  by $\phi^+$. Let $\mathcal{D}_n$ be $C^*$-algebra generated by $I$ and $\phi^{+}$, which is a subalgebra of 
$M_n(\mathbb{C})\otimes C(\overline{\mathbb{R}})$, where the metric is given by
$\|M\|=\max_{t\in \overline{\mathbb{R}}} \|M(t)\|$.

According to Lema \ref{propiedades},  the matrix  $\phi^{+}(t)$  is diagonalizable for each $t \in \R$ and its spectrum $\sigma (\phi^{+}(t)) $ lies in $[0, 1]$. 
The eigenvalues are given by  the equation
$\det (\lambda M_{-\infty}- M_{t})=0$.  
There exists an orthogonal matrix $B(t)$ such that 
$$D(t):=B(t)^T \phi^{+}(t) B(t)=\text{diag }\{\lambda_1(t),..., \lambda_n(t)\},$$
that is, 
if $B(t)=[v_1(t)\cdots v_n(t)]$, then $\phi^{+}(t) v_j(t)=\lambda_j(t) v_j(t) $ for $j=1,...,n$.
We may assume that $B$ and $\lambda_j$ are continuous on $\overline{\mathbb{R}}$,
and $\lambda_1(t)\leq \lambda_2(t) \leq \cdots \leq \lambda_n(t)$.
We have $\lambda_j(-\infty)=1$ and $\lambda_j(+\infty)=0$.

Up to isomorphism, the $C^*$-algebra $\mathcal{D}_n$ is equal to the $C^*$-algebra generated by
$D$, that is, each elements $\varphi \in \mathcal{D}_n$ is a uniform limit of polynomials on $D$:
$$\varphi(t)=\lim_{m\rightarrow \infty} \text{diag }\{p_m(\lambda_1(t)),..., p_m(\lambda_n(t))\}.$$

Let $\mathcal{C}_n(\overline{\mathbb{R}})$ be the $C^*$-subalgebra of $(C(\overline{\mathbb{R}}))^n$ that consists of all 
 $n$-tuples $f=(f_1,...,f_n)$ such that
$$f_j(t)=f_k(x)$$
when  $\lambda_j(t)=\lambda_k(x)$.
We identify $f$ with $\text{diag}\, \{f_1,...,f_n\}$.

\begin{thm}
	The $C^*$-algebra $\mathcal{D}_n$ generated by $\phi^+$ is isomorphic to $\mathcal{C}_n(\overline{\mathbb{R}})$,
     where the  isomorphism is given by
	$$\varphi \longmapsto B^T \varphi B.$$
\end{thm}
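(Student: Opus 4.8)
The plan is to exhibit the displayed map as the restriction of a single $*$-automorphism of the ambient algebra $M_n(\C)\otimes C(\overline{\R})$. Set $\Psi(M)=B^{T}MB$ for $M\in M_n(\C)\otimes C(\overline{\R})$, the product being taken pointwise in $t$. Since every $B(t)$ is a real orthogonal matrix, one has $B(t)B(t)^{T}=I$ and $B(t)^{*}=B(t)^{T}$, so a routine pointwise computation shows that $\Psi$ is linear, multiplicative, $*$-preserving and bijective with inverse $M\mapsto BMB^{T}$; it is moreover isometric because for each fixed $t$ the map $M(t)\mapsto B(t)^{T}M(t)B(t)$ is a unitary conjugation on $M_n(\C)$ and the norm on $M_n(\C)\otimes C(\overline{\R})$ is $\|M\|=\max_{t\in\overline{\R}}\|M(t)\|$. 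Hence it suffices to prove $\Psi(\mathcal{D}_n)=\mathcal{C}_n(\overline{\R})$; the isomorphism asserted in the theorem is then $\Psi|_{\mathcal{D}_n}$, namely $\varphi\mapsto B^{T}\varphi B$.

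Because $\mathcal{D}_n$ is the closed subalgebra generated by $I$ and $\phi^{+}$, its image $\Psi(\mathcal{D}_n)$ is the closed subalgebra generated by $\Psi(I)=I$ and $\Psi(\phi^{+})=D=\operatorname{diag}\{\lambda_1,\dots,\lambda_n\}$, that is, the uniform closure in $M_n(\C)\otimes C(\overline{\R})$ of the diagonal functions $p(D)=\operatorname{diag}\{p\circ\lambda_1,\dots,p\circ\lambda_n\}$ with $p$ a polynomial. The structural fact I would isolate first is that \emph{each $\lambda_j$ maps $\overline{\R}$ onto $[0,1]$}: by Lemma~\ref{propiedades}(1)--(2) the spectrum of $\phi^{+}(t)$ is contained in $[0,1]$ for every $t$ and equals $\{1\}$ at $t=-\infty$ and $\{0\}$ at $t=+\infty$, so $\lambda_j(\overline{\R})\subseteq[0,1]$ and contains both endpoints; since $\overline{\R}$ is connected and $\lambda_j$ is continuous, the intermediate value theorem forces $\lambda_j(\overline{\R})=[0,1]$.

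Next I would show $\mathcal{C}_n(\overline{\R})=\{(g\circ\lambda_1,\dots,g\circ\lambda_n):g\in C([0,1])\}$. Given $f=(f_1,\dots,f_n)\in\mathcal{C}_n(\overline{\R})$, applying the defining relation with $k=j$ shows that $f_j$ is constant on each fibre of $\lambda_j$, so $f_j=g_j\circ\lambda_j$ for a unique $g_j\colon[0,1]\to\C$; as $\lambda_j$ is a closed continuous surjection onto a Hausdorff space it is a quotient map, whence $g_j\in C([0,1])$. For $j\ne k$ and any $\lambda\in[0,1]$, pick $t,x$ with $\lambda_j(t)=\lambda=\lambda_k(x)$; the defining relation gives $g_j(\lambda)=g_k(\lambda)$, so all the $g_j$ equal one $g\in C([0,1])$ and $f=(g\circ\lambda_1,\dots,g\circ\lambda_n)$. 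The reverse containment is immediate. Now $p(D)=(p\circ\lambda_1,\dots,p\circ\lambda_n)\in\mathcal{C}_n(\overline{\R})$ and $\mathcal{C}_n(\overline{\R})$ is norm-closed, so $\Psi(\mathcal{D}_n)\subseteq\mathcal{C}_n(\overline{\R})$; conversely, given $g\in C([0,1])$, the Weierstrass theorem provides polynomials $p_m\to g$ uniformly on $[0,1]$, hence $p_m\circ\lambda_j\to g\circ\lambda_j$ uniformly on $\overline{\R}$ for all $j$ at once, i.e.\ $p_m(D)\to(g\circ\lambda_1,\dots,g\circ\lambda_n)$ in $M_n(\C)\otimes C(\overline{\R})$, giving $\mathcal{C}_n(\overline{\R})\subseteq\Psi(\mathcal{D}_n)$. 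Thus $\Psi(\mathcal{D}_n)=\mathcal{C}_n(\overline{\R})$ and the theorem follows.

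The only step that requires real care is the surjectivity of the $\lambda_j$ onto $[0,1]$ together with its consequence that the components of any member of $\mathcal{C}_n(\overline{\R})$ are pulled back from one and the same scalar function on $[0,1]$; this is exactly what makes the scalar Weierstrass theorem applicable to all $n$ coordinates simultaneously. The remaining ingredients — that conjugation by $B$ is an isometric $*$-automorphism, and that $\mathcal{C}_n(\overline{\R})$ is norm-closed — are routine, but I would record the latter explicitly since it is what yields the inclusion $\Psi(\mathcal{D}_n)\subseteq\mathcal{C}_n(\overline{\R})$.
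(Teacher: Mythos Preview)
Your argument is correct and follows the same line the paper itself sketches in the paragraph preceding the theorem (the paper states the result without a detailed proof, merely noting that conjugation by $B$ sends $\mathcal{D}_n$ to the uniform closure of $\{\operatorname{diag}(p\circ\lambda_1,\dots,p\circ\lambda_n)\}$). You supply exactly the details the paper omits: the surjectivity of each $\lambda_j$ onto $[0,1]$, the identification $\mathcal{C}_n(\overline{\R})=\{(g\circ\lambda_1,\dots,g\circ\lambda_n):g\in C([0,1])\}$, and the appeal to the scalar Weierstrass theorem.
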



\subsection{The algebra generated  by the Toeplitz Operators with symbols $a\in PC(\overline{\R}, \{0\})$.}\label{sep-est}

In this section, we describe the $C^*$-algebra generated by  all the Toeplitz operators $T_a$, or equivalently, the $C^*$-algebra generated by the
matrix-valued functions $\phi^a:\overline{\Pi}\rightarrow\C$, with $a \in PC(\overline{\R}, \{0\})$.
Let $\mathfrak{B}$ be the $C^*$-algebra generated
by all the matrix-valued functions  $\phi^a$, with $a\in PC(\overline{\R}, \{0\})$, and 
let $\mathcal{T}$ be the $C^*$-subalgebra  of $M_{n}(C(\overline{\Pi}))= M_{n}(\C) \otimes C(\overline{\Pi})$ 
consisting of all $M$ such that $M(\pm \infty, t_2) \in \C I$ for each $t_2\in\overline{\R}_+$ and
$$B^TM(\cdot, 0)B, \  B^TM(\cdot, +\infty)B \in \mathcal{C}_n(\overline{\mathbb{R}}).$$
We will prove that $\mathfrak{B}=\mathcal{T}$ by using a Stone-Weierstrass theorem for $C^*$-algebras.

\begin{thm}[\cite{Kaplansky}]\label{kaplansky}
	Let $\mathcal{A }$ and $\mathcal{B}$ be  $C^{*}$-algebras such that $\mathcal{B} \subset \mathcal{A}$. If $\mathcal{A}$ is a CCR algebra and $\mathcal{B}$  separates the pure state space of $\mathcal{A}$, then $\mathcal{B}=\mathcal{A}$.
\end{thm}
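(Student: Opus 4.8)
The plan is to reduce the assertion $\mathcal{B}=\mathcal{A}$ to a statement about the irreducible representations of $\mathcal{A}$, exploiting the CCR hypothesis, and then to glue the representation-wise information back together. Recall that $\mathcal{A}$ being CCR means that for every irreducible representation $\pi\colon\mathcal{A}\to B(H_\pi)$ one has $\pi(\mathcal{A})=\mathcal{K}(H_\pi)$, the algebra of compact operators. I also adopt the standard convention that $\mathcal{B}$ separates $P(\mathcal{A})\cup\{0\}$, i.e. no pure state of $\mathcal{A}$ annihilates $\mathcal{B}$; this is automatic whenever $\mathcal{B}$ contains an approximate unit of $\mathcal{A}$, in particular in the unital setting in which the theorem is applied here, and it guarantees $\pi(\mathcal{B})\neq\{0\}$ for every irreducible $\pi$.

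First I would show that for each irreducible representation $\pi$ of $\mathcal{A}$ the restriction $\pi|_{\mathcal{B}}$ is again irreducible. Suppose not: then there is a nontrivial closed $\pi(\mathcal{B})$-invariant subspace $K\subset H_\pi$, and since $\mathcal{B}$ is $*$-closed, $K^\perp$ is invariant as well. Choose unit vectors $\xi\in K$ and $\eta\in K^\perp$ and set $\zeta_\pm=\tfrac1{\sqrt2}(\xi\pm\eta)$. For every $b\in\mathcal{B}$ the cross terms $\langle\pi(b)\xi,\eta\rangle$ and $\langle\pi(b)\eta,\xi\rangle$ vanish by invariance, so $\langle\pi(b)\zeta_+,\zeta_+\rangle=\tfrac12(\langle\pi(b)\xi,\xi\rangle+\langle\pi(b)\eta,\eta\rangle)=\langle\pi(b)\zeta_-,\zeta_-\rangle$. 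Hence the vector states $\omega_{\zeta_+}$ and $\omega_{\zeta_-}$ agree on $\mathcal{B}$; but, $\pi$ being irreducible, they are pure states of $\mathcal{A}$, and they are distinct because $\zeta_+$ and $\zeta_-$ are not proportional. This contradicts the separation of $P(\mathcal{A})$, so $\pi|_{\mathcal{B}}$ must be irreducible. Since $\pi(\mathcal{B})$ is then a $*$-subalgebra acting irreducibly on $H_\pi$ and contained in $\pi(\mathcal{A})=\mathcal{K}(H_\pi)$, it contains a nonzero compact operator, and any irreducibly acting $C^*$-algebra meeting $\mathcal{K}(H_\pi)$ nontrivially contains all of $\mathcal{K}(H_\pi)$; thus $\pi(\mathcal{B})=\pi(\mathcal{A})$ for every irreducible $\pi$.

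The remaining, and main, difficulty is to promote this fiberwise equality of images to equality of the algebras themselves: knowing $\pi(\mathcal{B})=\pi(\mathcal{A})$ for all irreducible $\pi$ does not by itself force $\mathcal{B}=\mathcal{A}$, because an element of $\mathcal{B}$ realizing $\pi(a)$ a priori depends on $\pi$. To glue, I would also record that $\mathcal{B}$ separates the points of the spectrum $\widehat{\mathcal{A}}$: two inequivalent irreducible representations give rise to pure states that must be distinguished by $\mathcal{B}$, so $\ker(\pi_1|_{\mathcal{B}})$ and $\ker(\pi_2|_{\mathcal{B}})$ cannot coincide. Then I would invoke the structure theory of CCR algebras. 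A liminal algebra admits a composition series $0=I_0\subset I_1\subset\cdots\subset I_{\alpha}=\mathcal{A}$ of closed ideals whose successive quotients $I_{\beta+1}/I_\beta$ have Hausdorff spectrum and are therefore the $C^*$-algebra of continuous sections, vanishing at infinity, of a bundle of elementary $C^*$-algebras $\mathcal{K}(H_x)$ over a locally compact Hausdorff base. Over such a base the two facts already established, namely fullness on each fiber and separation of the points of the base, feed into a genuine commutative-type Stone–Weierstrass/Tietze argument, yielding that the image of $\mathcal{B}$ in each subquotient is everything. A transfinite induction along the series, using $I_\beta\subseteq\mathcal{B}$ at stage $\beta$ together with surjectivity onto $I_{\beta+1}/I_\beta$ to conclude $I_{\beta+1}\subseteq\mathcal{B}$ (and passing to closures at limit ordinals), then gives $\mathcal{A}=I_\alpha\subseteq\mathcal{B}$, i.e. $\mathcal{B}=\mathcal{A}$.

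I expect the gluing step to be by far the hardest part: reducing to irreducible representations is an elementary vector-state computation, whereas the passage from fiberwise equality to global equality is exactly what makes the Stone–Weierstrass problem for $C^*$-algebras delicate. The CCR hypothesis is what tames it, both by identifying the fibers as full compact-operator algebras and, crucially, by supplying the composition series that confines the non-Hausdorff pathology of $\widehat{\mathcal{A}}$ to a transfinite induction over subquotients with Hausdorff spectra, where the classical Stone–Weierstrass theorem applies.
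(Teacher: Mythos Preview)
The paper does not prove this theorem at all: it is quoted from Kaplansky \cite{Kaplansky} as an external result and used as a black box (see the proof of the main theorem in Section~\ref{sep-est}, which simply invokes Theorem~\ref{kaplansky}). So there is no ``paper's own proof'' to compare your proposal against.

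That said, your sketch is a faithful outline of the classical argument. The first step---showing $\pi|_{\mathcal B}$ is irreducible via the $\zeta_\pm$ vector-state trick, and then concluding $\pi(\mathcal B)=\mathcal K(H_\pi)$ because an irreducibly acting $C^*$-algebra meeting the compacts contains all compacts---is exactly how one reduces to the fiberwise statement. Your identification of the gluing step as the real difficulty is also correct, and the composition-series approach (reducing to subquotients with Hausdorff spectrum, where the section-algebra Stone--Weierstrass theorem applies, then climbing by transfinite induction) is the standard route. One point to be careful about in the induction: to pass from $I_\beta\subseteq\mathcal B$ to $I_{\beta+1}\subseteq\mathcal B$ you need the image of $\mathcal B\cap I_{\beta+1}$ (not merely of $\mathcal B$) in $I_{\beta+1}/I_\beta$ to be everything, so you must check that the separation hypothesis on $\mathcal B$ descends appropriately to $\mathcal B\cap I_{\beta+1}$ modulo $I_\beta$; this is routine but should be said.
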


Our main result of this section is the following:

\begin{thm}
	The $C^*$-algebra generated by all  matrix-valued functions $\phi^a$, with $a \in PC(\overline{\R}, \{0\})$,  equals  $\mathcal{T}$. That is, the $C^*$-algebra generated by all Toeplitz operators  $T_a$ is isomorphic and isometric  to  the algebra $\mathcal{T}$, where the isomorphism is defined on the generators by the rule
	$$T_{a} \mapsto \phi^a.$$
\end{thm}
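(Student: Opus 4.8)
The strategy is to apply the Stone–Weierstrass theorem for CCR $C^*$-algebras (Theorem \ref{kaplansky}) with $\mathcal{A}=\mathcal{T}$ and $\mathcal{B}=\mathfrak{B}$. First I would verify the inclusion $\mathfrak{B}\subseteq\mathcal{T}$: each generator $\phi^a$ with $a\in PC(\overline{\R},\{0\})$ is continuous on $\overline{\Pi}$ by the theorems of Section \ref{con-func}, its values at $(\pm\infty,t_2)$ are scalar multiples of $I$ (this follows from Lemmas \ref{lim-phi-infty-0} and \ref{lim-phi-infty-t0} together with the decomposition $a=\hat a+[a(0_+)-a(0_-)]\chi_+$, since $\phi^{\chi_+}(\pm\infty,t_2)$ is $I$ or $0$), and its restrictions $\phi^a(\cdot,0)$ and $\phi^a(\cdot,+\infty)$ land in $\mathcal{C}_n(\overline{\R})$ after conjugation by $B$: along $t_2=+\infty$ the function is constant $a(0)I$ by Lemma \ref{lim-phi-t0-infty}, and along $t_2=0$ it is $a_-\phi^+ + a_+(I-\phi^+)$ by Lemma \ref{lim-gamma-x0-0}, which lies in $\mathcal{D}_n$ and hence conjugates into $\mathcal{C}_n(\overline{\R})$ by the theorem identifying $\mathcal{D}_n\cong\mathcal{C}_n(\overline{\R})$. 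Since $\mathcal{T}$ is a closed $*$-subalgebra, taking products, adjoints and limits keeps us inside $\mathcal{T}$, so $\mathfrak{B}\subseteq\mathcal{T}$.

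Next I would check that $\mathcal{T}$ is CCR. It suffices to describe the irreducible representations of $\mathcal{T}$: for each point $(t_1,t_2)\in\overline{\Pi}$ with $t_1\in\R$ we have the evaluation $M\mapsto M(t_1,t_2)\in M_n(\C)$, which is irreducible onto $M_n(\C)$; at $t_1=\pm\infty$ the evaluation is scalar, i.e.\ a one-dimensional representation; and along the edges $t_2\in\{0,+\infty\}$ the matrices $M(t_1,t_2)$ are simultaneously block-diagonalizable via $B(t_1)$ according to the coincidence pattern of the eigenvalues $\lambda_j$, so the image of each point evaluation is a direct sum of full matrix algebras — in every case a finite-dimensional algebra. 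Thus $\mathcal{T}$ is a subalgebra of a continuous field of finite-dimensional $C^*$-algebras, hence CCR; every irreducible representation is (equivalent to) one of these point evaluations followed by projection onto an irreducible block.

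The crux is showing that $\mathfrak{B}$ separates the pure states of $\mathcal{T}$. Pure states correspond to vector states of the irreducible point-evaluation representations just listed. I would argue in cases. (i) Two pure states supported at distinct points $(t_1,t_2)\neq(t_1',t_2')$ of $\overline{\Pi}$: one needs a symbol $a$ for which $\phi^a$ distinguishes the points — using the explicit integral formula $\phi^a(t_1,t_2)=\int_\R a\big((-t_1+s_1)/(2\sqrt{t_2(t_1^2+1)})\big)H(s_1)H(s_1)^Tds_1$ and, where needed, the indicator $\chi_+$ whose image $\phi^+(t)=\int_t^\infty HH^Tds$ is strictly monotone in $t$ (Lemma \ref{propiedades}(2)), one can produce scalars $\phi^a$ taking different values, or different matrices, at the two points. (ii) Two distinct vector states at the same interior point $(t_1,t_2)$, $t_1\in\R$: here the point evaluation of $\mathfrak{B}$ must be all of $M_n(\C)$. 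This is the subtle point and I expect it to be the main obstacle: one must show the matrices $\{\phi^a(t_1,t_2): a\in PC(\overline\R,\{0\})\}$ generate $M_n(\C)$ as a $C^*$-algebra. A clean route is to fix $(t_1,t_2)$, absorb the affine change of variable, and note the available matrices include $\int_\R a(s)H(\sigma(s))H(\sigma(s))^Tds$ for an affine bijection $\sigma$ and arbitrary $a$; choosing $a$ to approximate point masses yields the rank-one matrices $H(s_1)H(s_1)^T$ for every $s_1$, and the linear span of $\{H(s_1)H(s_1)^T:s_1\in\R\}$ is all symmetric matrices (because $e^{s_1^2}H(s_1)=CS$ with $C$ invertible and $\{SS^T\}$ spanning the symmetric matrices as $s_1$ varies, by a Vandermonde argument), so the generated $*$-algebra is $M_n(\C)$. (iii) Vector states at an edge point: the image of $\mathfrak{B}$ there is, after conjugation by $B(t_1)$, a block-diagonal algebra, and one must check it contains all blocks prescribed by $\mathcal{C}_n(\overline\R)$; this follows by combining the edge formulas $a_-\phi^++a_+(I-\phi^+)$ and $a(0)I$ with part (ii) applied in a limiting sense, together with the identification $\mathcal{D}_n\cong\mathcal{C}_n(\overline\R)$. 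Once separation of pure states is established, Theorem \ref{kaplansky} gives $\mathfrak{B}=\mathcal{T}$, and since $T_a\mapsto\phi^a$ is a $*$-isomorphism (by Theorem \ref{equiv-unitaria-4} together with the fact that $R_{(n,1)}$ is unitary onto, so the map is isometric), the $C^*$-algebra generated by the $T_a$ is isomorphic and isometric to $\mathcal{T}$.
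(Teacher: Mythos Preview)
Your plan is exactly the paper's: apply Kaplansky's Stone--Weierstrass theorem with $\mathcal{A}=\mathcal{T}$ and $\mathcal{B}=\mathfrak{B}$, check the inclusion and the CCR property, then separate pure states. The paper carries out the separation through six explicit lemmas (Lemmas \ref{sep-lateral}--\ref{misma-fibra}), using the concrete symbols $s/\sqrt{s^2+1}$, $1/(s^2+1)$, $|s|/(s^2+1)$, $\chi_\pm$, and---for interior points---the approximate-identity family $a^r_\alpha(y)=\alpha^{-1}a((y-r)/\alpha)$. Your approximate-delta idea in case (ii) is precisely what drives Lemmas \ref{separar-centro} and \ref{misma-fibra}; the paper phrases the same-fiber case as ``if the states agree on all products $\gamma^{a_1}\gamma^{a_2}$ then $w=\lambda v$'' rather than ``the image is $M_n(\C)$'', but the content is the same.

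One point needs tightening. In your case (i), for pure states $f_{(x_1,x_2),v}$ and $f_{(t_1,t_2),w}$ at \emph{distinct interior} points, you write that one can ``produce scalars $\phi^a$ taking different values, or different matrices, at the two points''. But $\phi^a$ is never scalar at an interior point, and exhibiting $\phi^a(x_1,x_2)\neq\phi^a(t_1,t_2)$ does not by itself separate the vector states $\langle\phi^a(x_1,x_2)v,v\rangle$ and $\langle\phi^a(t_1,t_2)w,w\rangle$ when $v\neq w$. The paper fills this with Lemma \ref{separar-centro}: feeding the \emph{same} approximate-identity family you use in (ii) into both pure states and passing to the limit gives
\[
2\sqrt{x_2}\,|\langle H(x_1+2\sqrt{x_2}\,r),v\rangle|^2=2\sqrt{t_2}\,|\langle H(t_1+2\sqrt{t_2}\,r),w\rangle|^2\quad\text{for all }r,
\]
and comparing the Gaussian factors $e^{-(x_1+2\sqrt{x_2}r)^2}$ and $e^{-(t_1+2\sqrt{t_2}r)^2}$ forces $(x_1,x_2)=(t_1,t_2)$. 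So the remedy is simply to reuse your approximate-delta device from (ii) here as well, rather than appealing to monotonicity of $\phi^+$.
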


\begin{proof}
	$\mathfrak{B}=\mathcal{T}$ follows from Theorem \ref{kaplansky}. That is, $\mathfrak{B}$  separates the pure state space of $\mathcal{T}$ according to  Lemmas \ref{sep-lateral},
	\ref{separar-l-c}, \ref{separar-l-a}, \ref{separar-c-a}, \ref{separar-centro} and \ref{misma-fibra}.
\end{proof}

It is easy to see that $\mathfrak{B}$ is contained in $\mathcal{T}$.
Let $\langle \cdot , \cdot \rangle$ denote the usual inner product on $\C^n$.
Now the pure state space of the $C^*$-algebra $\mathcal{T}$ consists of 
all functionals having the form:
\begin{enumerate}
\item[1)]  $f_{(x_1,x_2), v}(M) = \langle M(x_1,x_2) v, v \rangle$ for $(x_1,x_2) \in \Pi$, $v \in \C^n$ a unit vector,
\item[2)]  $f_{(\pm \infty, t_2)}(M) = \lambda_{\pm t_2}$ for $0\leq t_2 \leq +\infty$, where $\lambda_{\pm t_2}I=M(\pm \infty, t_2)$,
\item[3)] $f_{(t_1,\pm \infty), j}(M)=\langle M(t_1,\pm \infty) v_j(t_1), v_j(t_1) \rangle$ for $t_1\in \overline{\R}$ and $j=,...,n$,
\end{enumerate}
where $M\in \mathcal{T}$ is arbitrary.
 
 \begin{lem}\label{sep-lateral}
 Let $t_2, \tau_2 \in \overline{\R}_+$. 
 We have
 $f_{(-\infty, t_2)}(\phi ^{\chi_+})\neq f_{(+ \infty, \tau_2)}(\phi^{\chi_+}).$
 If $t_2 \neq \tau_2$,
 then there exists 	$a \in C(\overline{\R})$ such that
 $f_{(\pm\infty, t_2)}(\phi ^a)\neq f_{(\pm \infty, \tau_2)}(\phi^a).$
 \end{lem}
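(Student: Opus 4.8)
The plan is to compute the relevant state values explicitly from the integral representations and then separate them. For the first assertion, recall from \eqref{phi+} that $\phi^{\chi_+}$ depends only on $t_1$ and equals $\phi^+(t_1)=\int_{t_1}^\infty H(s)[H(s)]^T\,ds$; by Lemma \ref{lim-phi-infty-0}-type reasoning (more precisely, by part (1) of Lemma \ref{propiedades}), $\phi^+(-\infty)=I$ and $\phi^+(+\infty)=0$. Hence $\phi^{\chi_+}(\pm\infty,t_2)=\phi^+(\mp\infty)$ is the scalar matrix $I$ at $-\infty$ and $0$ at $+\infty$ (the value being independent of $t_2$), so $f_{(-\infty,t_2)}(\phi^{\chi_+})=1\neq 0=f_{(+\infty,\tau_2)}(\phi^{\chi_+})$. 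This is immediate and requires no estimate.

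For the second assertion, the key is Lemma \ref{lim-phi-infty-t0}, which gives, for $a\in C(\overline{\R})$ and $t_2\in\R_+$,
\begin{equation*}
\phi^a(+\infty,t_2)=a\!\left(-\tfrac{1}{2\sqrt{t_2}}\right)I,\qquad
\phi^a(-\infty,t_2)=a\!\left(\tfrac{1}{2\sqrt{t_2}}\right)I,
\end{equation*}
together with Lemma \ref{lim-phi-infty-0} for $t_2=0$ (giving $a(-\infty)I$ and $a(+\infty)I$ respectively) and Lemma \ref{lim-phi-t0-infty} for $t_2=+\infty$ (giving $a(0)I$ on both sides). Thus, writing $g(t_2)$ for the common scalar value of $\phi^a(+\infty,t_2)$, we have $g(t_2)=a(-1/(2\sqrt{t_2}))$ for $t_2\in(0,+\infty)$, $g(0)=a(-\infty)$, $g(+\infty)=a(0)$; similarly $f_{(-\infty,t_2)}(\phi^a)$ equals $a(1/(2\sqrt{t_2}))$, $a(+\infty)$, $a(0)$ in the three regimes. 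The map $t_2\mapsto -1/(2\sqrt{t_2})$ is a homeomorphism of $\overline{\R}_+$ onto $[-\infty,0]$, so given $t_2\neq\tau_2$ the two points $-1/(2\sqrt{t_2})$ and $-1/(2\sqrt{\tau_2})$ are distinct; choosing $a\in C(\overline{\R})$ with distinct values at these two points (e.g. a continuous function separating them, which exists since $C(\overline{\R})$ separates points of $\overline{\R}$) yields $f_{(\pm\infty,t_2)}(\phi^a)\neq f_{(\pm\infty,\tau_2)}(\phi^a)$ for the $+\infty$ fiber; the $-\infty$ case is identical using $t_2\mapsto 1/(2\sqrt{t_2})$.

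The only mild subtlety is bookkeeping at the endpoints $t_2=0$ and $t_2=+\infty$: one must check that the above formulas for $g$ are consistent with the limiting values supplied by Lemmas \ref{lim-phi-infty-0} and \ref{lim-phi-t0-infty}, i.e.\ that $t_2\mapsto g(t_2)$ is genuinely $a\circ(t_2\mapsto -1/(2\sqrt{t_2}))$ on all of $\overline{\R}_+$ (noting $-1/(2\sqrt{t_2})\to 0$ as $t_2\to\infty$ and $\to-\infty$ as $t_2\to 0^+$, which matches). Once this is in place, the separation is purely a statement about $C(\overline{\R})$ separating points of the parameter intervals, so there is no real obstacle; the substantive content has already been done in Lemmas \ref{lim-gamma-x0-0}--\ref{lim-phi-t0-infty}.
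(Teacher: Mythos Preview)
Your argument is correct and follows essentially the same route as the paper: use $\phi^{\chi_+}(-\infty,\cdot)=I$ and $\phi^{\chi_+}(+\infty,\cdot)=0$ for the first claim, and for the second invoke Lemmas \ref{lim-phi-infty-0}, \ref{lim-phi-infty-t0}, \ref{lim-phi-t0-infty} to get $\phi^a(\pm\infty,t_2)=a(\mp 1/(2\sqrt{t_2}))I$ and then separate via a suitable $a$ (the paper simply exhibits $a(s)=s/\sqrt{s^2+1}$, giving $f_{(\pm\infty,t_2)}(\phi^a)=\mp 1/\sqrt{1+4t_2}$, rather than appealing abstractly to $C(\overline{\R})$ separating points). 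One small typo: you wrote $\phi^{\chi_+}(\pm\infty,t_2)=\phi^+(\mp\infty)$, but since $\phi^{\chi_+}(t_1,t_2)=\phi^+(t_1)$ the sign should not flip; your stated values $1$ and $0$ are nonetheless correct.
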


\begin{proof}
	The pure states  $f_{(-\infty, t_2)}$ and $f_{(+ \infty, \tau_2)}$
	are separated by $\phi^{\chi_+}$ 
	since $\phi^{\chi_+}(-\infty, +\infty)= I$ and $\phi^{\chi_+}(+\infty, +\infty)= 0$. 
If
$a \in C(\overline{\R})$, then $\phi^{a}(\pm \infty, t_2)= a(\mp 1/(2\sqrt{t_2}))I$ for $t_2\in \mathbb{R}+$, 
$\phi^{a}(\pm \infty, 0)=a(\mp \infty)I $ and $\phi^{a}(\pm \infty, +\infty)=a(0)I$. Thus, taking
$a(s)= s/\sqrt{s^2 +1}$ we have 
$$f_{(\pm \infty, t_2)}(\phi^a)= \mp \frac{1}{\sqrt{1+4t_{2}}}.$$
Therefore, the pure states $f_{(\pm \infty, t_2)}$  and $f_{(\pm \infty, \tau_2)}$ are separated by $\phi^a$
when $t_2\neq \tau_2$. 
\end{proof}

We shall continue  separating the rest of pure states using continuous functions on $\overline{\R}$ and the indicator function $\chi_+$.

Let $v \in \C^n$ be a unit vector. Consider the function $h_{v}(s)=|\langle H(s), v \rangle|^2 $. This can be written as $h_{v}(s)=q_v (s)e^{-s^2}$, where
$$q_v(s)=|v_0 d_0H_0(s)  + \cdots + v_{n-1}d_{n-1}H_{n-1}(s)|^2$$
is a polynomial of degree at mosts $2n-2$ taking non-negative values.

\begin{lem}\label{separar-l-c}
	Let $v \in \C^n$ be a unit vector, and $(\pm\infty, t_2)$, $(x_1, x_2)$ be points with  
	 $x_1 \in \R $ and $x_2, t_2 \in \overline{\R}_+$. Then there exists a  symbol 
	$a \in PC(\overline{\R}, \{0\})$ such that
	$$f_{(\pm\infty, t_2)}(\phi ^a)\neq f_{(x_1, x_2),v}(\phi^a).$$
	
\end{lem}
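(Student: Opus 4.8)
The goal is to separate a pure state of the form $f_{(\pm\infty,t_2)}$ from a pure state of the form $f_{(x_1,x_2),v}$ using the value of some $\phi^a$ with $a\in PC(\overline{\R},\{0\})$. The key fact is that $\phi^a(\pm\infty,t_2)$ is always a scalar multiple of the identity, with explicit value $a(\mp 1/(2\sqrt{t_2}))$ for $t_2\in\R_+$, $a(\mp\infty)$ for $t_2=0$, and $a(0)$ for $t_2=+\infty$ (this was recorded in the proof of Lemma~\ref{sep-lateral} and follows from Lemmas~\ref{lim-phi-infty-0}, \ref{lim-phi-infty-t0}, \ref{lim-phi-t0-infty}). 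So $f_{(\pm\infty,t_2)}(\phi^a)$ is just that scalar, while $f_{(x_1,x_2),v}(\phi^a)=\langle \phi^a(x_1,x_2)v,v\rangle$. Since $\phi^a(x_1,x_2)$ for $(x_1,x_2)\in\Pi$ is an honest average of values of $a$ against the probability-type kernel $H(s_1)[H(s_1)]^T$ (weighted so that $\langle\phi^a(x_1,x_2)v,v\rangle=\int a(\cdots)\,h_v(s_1)\,ds_1$ with $\int h_v=1$), the plan is to exhibit a single symbol $a$ whose ``boundary scalar'' at $(\pm\infty,t_2)$ differs from this average.

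\smallskip
First I would reduce to a convenient normalization: pick the target boundary value $s^\ast\in\overline{\R}$ corresponding to the given point $(\pm\infty,t_2)$ — namely $s^\ast=\mp 1/(2\sqrt{t_2})$, $\mp\infty$, or $0$ according to the three cases — and try the symbol $a(s)=\chi_+(s)$ or, more flexibly, $a(s)=\chi_{[c,+\infty]}(s)$ for a well-chosen real $c$; note $\chi_{[c,\infty]}=\chi_+(\,\cdot-c)$ is again in $PC(\overline{\R},\{0\})$, and $\phi^{\chi_{[c,\infty]}}(x_1,x_2)=\int_{\{(-x_1+y_1)/(2\sqrt{x_2})\ge c\}} H(y_1)[H(y_1)]^T\,dy_1$, so $f_{(x_1,x_2),v}(\phi^{\chi_{[c,\infty]}})=\int_{x_1+2c\sqrt{x_2}}^{\infty} h_v(y_1)\,dy_1\in(0,1)$ strictly, because $h_v\ge 0$, $h_v$ is not a.e.\ zero, and $h_v$ is not concentrated on a half-line (its zero set is finite). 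On the other side, $f_{(\pm\infty,t_2)}(\phi^{\chi_{[c,\infty]}})$ equals $\chi_{[c,\infty]}(s^\ast)\in\{0,1\}$. So the separation succeeds as long as we can choose $c$ with $\chi_{[c,\infty]}(s^\ast)\in\{0,1\}$ — which is automatic — and the issue is only the degenerate possibility that $x_1+2c\sqrt{x_2}$ happens to sit at an awkward spot; but since the strict inequalities $0<\int_t^\infty h_v<1$ hold for every finite $t$, there is in fact nothing to avoid. The one genuinely degenerate case is $x_2=0$: then $(x_1,x_2)=(x_1,0)$ and $\phi^a(x_1,0)=\gamma^a(x_1,0)=a_-\int_{-\infty}^{x_1}H H^T+a_+\int_{x_1}^\infty HH^T$ by Lemma~\ref{lim-gamma-x0-0}, so $f_{(x_1,0),v}(\phi^{\chi_{[c,\infty]}})$ becomes $\int_{x_1}^\infty h_v$ if $c\le(\text{effective cutoff})$... and I would handle this by simply noting that $\chi_{[c,\infty]}$ has $a_-\in\{0,1\}$, $a_+\in\{0,1\}$, and choosing $c$ so that $f_{(x_1,0),v}(\phi^{\chi_{[c,\infty]}})\in(0,1)$ while the scalar $\chi_{[c,\infty]}(s^\ast)$ is $0$ or $1$.

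\smallskip
To make this uniform across all three cases ($t_2\in\R_+$, $t_2=0$, $t_2=+\infty$) and both signs, I would proceed as follows. Given $(\pm\infty,t_2)$ and $(x_1,x_2)$: (i) compute $s^\ast\in\overline{\R}$ as above; (ii) choose a real number $c$ with $s^\ast>c$ (possible unless $s^\ast=-\infty$, in which case pick $c$ with $s^\ast<c$ and use the complementary indicator $\chi_{[-\infty,c]}=1-\chi_{[c,\infty]}$, still in $PC(\overline{\R},\{0\})$); then $f_{(\pm\infty,t_2)}(\phi^a)=1$ (resp.\ $0$); (iii) observe $f_{(x_1,x_2),v}(\phi^a)$ is an integral of $h_v$ over a proper sub-ray of $\R$ (when $x_2>0$) or a convex combination with weights in $\{0,1\}$ governed by $a_\mp$ (when $x_2=0$), and in every instance this value lies strictly between $0$ and $1$ thanks to Lemma~\ref{propiedades}(2) applied to $\phi^+$, i.e.\ $0<\langle\phi^+(t)v,v\rangle<1$ for all finite $t$. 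Hence the two values differ and $\phi^a$ separates the states.

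\smallskip
\textbf{Main obstacle.} The only subtlety is the corner case $x_2=0$ together with an unlucky relation between $x_1$, the cutoff $c$, and the sign pattern of $a_\mp$ — one must make sure that with $a=\chi_{[c,\infty]}$ (or its complement) the value $f_{(x_1,0),v}(\phi^a)$ is not accidentally forced to be exactly $0$ or $1$, which could coincide with the boundary scalar. I expect this is resolved cleanly by Lemma~\ref{lim-gamma-x0-0} and the strict positivity/strict-upper-bound in Lemma~\ref{propiedades}(2): for $a=\chi_+$ one gets $f_{(x_1,0),v}(\phi^{\chi_+})=\int_{x_1}^{\infty} h_v\in(0,1)$ exactly, so $\chi_+$ already does the job whenever $s^\ast\ne 0$; and when $s^\ast=0$ (the case $t_2=+\infty$, where $\phi^a(\pm\infty,+\infty)=a(0)I$ and $\chi_+(0)=1$) one instead shifts to $a=\chi_{[c,\infty]}$ with a small $c>0$ so that $a(0)=0$ while $f_{(x_1,x_2),v}(\phi^a)$ stays in $(0,1)$. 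Packaging these choices is routine once the strict inequalities are in hand.
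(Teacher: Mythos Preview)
Your approach is correct and is essentially the paper's: show that $f_{(x_1,x_2),v}(\phi^{\chi_\pm})\in(0,1)$ strictly while the boundary scalar lies in $\{0,1\}$. Your ``main obstacle'' is a phantom, though: by (\ref{ind.matriz}) one has $\phi^{\chi_+}(t_1,t_2)=\phi^+(t_1)$ independently of $t_2$, so $\phi^{\chi_+}(\pm\infty,t_2)=\phi^+(\pm\infty)\in\{0,I\}$ for \emph{every} $t_2\in[0,+\infty]$ (Lemma~\ref{lim-phi-t0-infty} does not apply to $\chi_+$, which is discontinuous at $0$), and hence the paper simply takes $a=\chi_+$ for $(+\infty,t_2)$ and $a=\chi_-=1-\chi_+$ for $(-\infty,t_2)$, with no shift $c$ and no case split on $x_2$.
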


\begin{proof}
	For $x_1 \in \mathbb{R}$ and $x_2  \in \overline{\R}_+$, we have
	$$f_{(x_1, x_2),v}(\phi^{\chi_+}) =\int_{x_1}^{\infty}   |\langle H(s), v \rangle|^2 ds
	=\int_{x_1}^{\infty} q_v(s)e^{-s^2} ds.  $$
	Since $q_v$ is not zero and non-negative,  $f_{(x_1, x_2),v}(\phi^{\chi_+}) >0$. 
	On the other hand, $f_{(+\infty, t_2)}(\phi ^{\chi_+})= \chi_+ (-1/(2\sqrt{t_2}))=0$ for $t_2\in [0,+\infty]$. 
	Hence
	$$f_{(+\infty, t_2)}(\phi ^{\chi_+}) \neq  f_{(x_1, x_2),v}(\phi^{\chi_+}).$$
	We now take $\chi_- =1-\chi_+$, then
	$$f_{(x_1, x_2),v}(\phi^{\chi_-}) =\int_{-\infty}^{x_1}   |\langle H(s), v \rangle|^2 ds
	=\int_{-\infty}^{x_1} q_v(s)e^{-s^2}>0.$$
	Also,   
	$f_{(-\infty, t_2)}(\phi ^{\chi_-})= \chi_- (1/(2\sqrt{t_2}))=0$.
	Then,
	$$f_{(-\infty, t_2)}(\phi ^{\chi_-}) \neq  f_{(x_1, x_2),v}(\phi^{\chi_-}).$$
\end{proof}

\begin{lem}\label{separar-l-a}
	Let $v,w \in \C^n$ be unit vectors, and let $(t_1, 0) ,(x_1, x_2) \in \overline{\Pi}$, with $t_1 \in \R$, $x_1 \in\R$ and 
	$0<x_2\leq +\infty$. Then there exists a symbol $a \in C(\overline{\R})$ such that 
	$$ f_{(t_1, 0),w}(\phi^{a}) \neq f_{(x_1, x_2),v}(\phi^{a}).$$
\end{lem}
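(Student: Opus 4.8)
The goal is to separate a pure state supported on the bottom edge $\overline{\R}\times\{0\}$ from one supported in the interior (or on the top edge $\overline{\R}\times\{+\infty\}$). The basic data are the two integral formulas
\[
f_{(x_1,x_2),v}(\phi^{a})=\int_{\R} a\!\left(\tfrac{-x_1+s_1}{2\sqrt{x_2(x_1^2+1)}}\right)\,|\langle H(s_1),v\rangle|^2\,ds_1
\]
for $0<x_2<+\infty$, together with $f_{(x_1,+\infty),v}(\phi^{a})=a(0)$ (by Lemma~\ref{lim-phi-t0-infty}, since $\phi^a$ is constant on $\overline{\R}\times\{+\infty\}$ and $\langle H(s),v\rangle$ has $L^2$-norm one), and on the bottom edge $f_{(t_1,0),w}(\phi^{a})=\gamma^a_{}(t_1,0)$ evaluated against $w$, which by Lemma~\ref{lim-gamma-x0-0} equals $a_-\int_{-\infty}^{t_1}|\langle H(s),w\rangle|^2ds + a_+\int_{t_1}^{\infty}|\langle H(s),w\rangle|^2ds$. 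So for $a\in C(\overline{\R})$ the two functionals to be separated are, respectively, a convex combination $\theta a_- + (1-\theta)a_+$ with $\theta=\theta(t_1,w)\in(0,1)$, and either a value $a(0)$ (if $x_2=+\infty$) or a weighted average $\int a\big(\text{affine}(s_1)\big)\,d\nu_{v}(s_1)$ of $a$ over an unbounded interval with the probability measure $d\nu_v = q_v(s)e^{-s^2}ds$ pushed forward by the affine map.

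The plan is to exhibit, for any given pair of points, an explicit $a\in C(\overline{\R})$ on which the two functionals disagree, splitting into cases. \textbf{Case $x_2=+\infty$:} here $f_{(x_1,+\infty),v}(\phi^a)=a(0)$ while $f_{(t_1,0),w}(\phi^a)=\theta a_-+(1-\theta)a_+$; choosing $a$ continuous with $a(0)=0$, $a_-=a_+=1$ (e.g.\ $a(s)=s^2/(s^2+1)$) gives $0\neq 1$. \textbf{Case $0<x_2<+\infty$:} now $f_{(x_1,x_2),v}(\phi^a)=\int a(\alpha s_1+\beta)\,d\nu_v(s_1)$ with $\alpha=1/(2\sqrt{x_2(x_1^2+1)})>0$ and $\beta=-x_1\alpha$. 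The key elementary fact is that $\nu_v$ is a probability measure with strictly positive density against Lebesgue measure on all of $\R$ (since $q_v$ is a nonzero, non-negative polynomial, hence positive off a finite set), so if $a\ge 0$, $a\not\equiv 0$ then $\int a(\alpha s_1+\beta)\,d\nu_v(s_1)>0$; choosing $a$ with $a\ge0$, $a\not\equiv0$, but $a_-=a_+=0$ (e.g.\ a continuous bump supported on a bounded interval, scaled so the pushed-forward argument range meets its support — note $\alpha s_1+\beta$ ranges over all of $\R$ as $s_1$ does) makes the interior functional strictly positive while the edge functional equals $\theta\cdot 0+(1-\theta)\cdot 0=0$. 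One must check both sub-cases $a_-=a_+=0$ does not force $a\equiv 0$ — it does not, since only the limits at $\pm\infty$ vanish — and that the argument $\alpha s_1+\beta$, being an affine function of $s_1$ with $\alpha>0$, sweeps all reals so its support intersection with $\mathrm{supp}\,a$ has positive $\nu_v$-measure.

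\textbf{The main obstacle} I anticipate is the case $0<x_2<+\infty$ when $\theta=\theta(t_1,w)$ could in principle be arranged so that $\theta a_-+(1-\theta)a_+$ matches $\int a(\alpha s_1+\beta)d\nu_v$ for every $a$ — but this cannot happen, because the bottom-edge functional depends on $a$ only through $a_-,a_+$ (two numbers), whereas the interior functional $a\mapsto\int a(\alpha s_1+\beta)\,d\nu_v(s_1)$ is a genuinely infinite-dimensional linear functional: it is nonzero on some $a$ that is supported away from $\pm\infty$ (so $a_-=a_+=0$), as just noted. Thus the two functionals are distinct elements of the dual, and the witness $a$ above realizes the separation. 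The remaining bookkeeping — confirming $q_v\not\equiv0$ (which holds since $\langle H(s),v\rangle=e^{-s^2/2}\langle CS,v\rangle$ and $C$ is invertible by Lemma~\ref{propiedades}(3), so $\langle CS,v\rangle$ is a nonzero polynomial), and confirming $\theta\in(0,1)$ strictly (so the bottom-edge functional is a bona fide convex combination, which is all we use) — is routine and parallels the estimates already carried out in Lemmas~\ref{lim-gamma-x0-0} and \ref{lim-phi-t0-infty}.
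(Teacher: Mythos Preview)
Your proposal is correct and follows essentially the same approach as the paper: both exploit that the bottom-edge functional depends on $a$ only through the two limits $a_-,a_+$, so any $a\in C(\overline{\R})$ with $a_-=a_+=0$ kills it while leaving the interior (or top-edge) functional nonzero. The only difference is cosmetic: the paper dispenses with your case split by using the single test function $a(s)=1/(s^2+1)$, which has $a_-=a_+=0$, is strictly positive on $\R$ (handling $0<x_2<\infty$), and has $a(0)=1$ (handling $x_2=+\infty$), all at once.
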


\begin{proof}
	Take  $a(s)=1/(s^2 +1)$, so  $a(-\infty)=0=a(+\infty)$. Then, 
	$$ f_{(t_1, 0),w}(\phi^{a})= a(-\infty)	\int^{t_1}_{-\infty} |\langle H(s), w \rangle|^2 ds +
	 a(+\infty) \int_{t_1}^{\infty} |\langle H(s), w \rangle|^2 ds = 0.$$
	
	Since $a(s) >0$ for all $s \in \R$ and  $q_v(s)=0$ at most at finite number of values of $s$, we have that 
	 $a(s)q_v(s)e^{-s^2}>0$ almost everywhere. Then,
	$$	f_{(x_1, x_2),v}(\phi^{a})=
	\int_{-\infty}^{\infty} a\left(\frac{-x_{1}+s}{2\sqrt{x_{2}(x_{1}^{2} +1)}}\right) q_v(s)e^{-s^2}ds >0
	\quad \text{if } x_2\in \mathbb{R}.$$
	Moreover, $\phi^a(x_1, +\infty)= a(0)I=1\cdot I$. Thus,
	$f_{(x_1, +\infty),v}(\phi^{a}) =1. $
	We have proved that $ f_{(t_1, 0),w}(\phi^{a}) \neq f_{(x_1, x_2),v}(\phi^{a})$.
\end{proof}

\begin{lem}\label{separar-c-a}
	Let $v,w \in \C^n$  be unit vectors, $(x_1,x_2) \in \Pi$ and $(t_1,+\infty)\in\overline{\Pi}$ with 
	 $ t_1 \in \R$. Then there exists $a \in C(\overline{\R})$ such that
	$$	f_{(x_1, x_2),v}(\phi^{a}) \neq f_{(t_1, +\infty),w}(\phi^{a}).$$
\end{lem}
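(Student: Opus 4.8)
The plan is to use the fact, established in Lemma \ref{lim-phi-t0-infty}, that every symbol $a \in C(\overline{\R})$ yields a spectral function that is a scalar multiple of the identity at the top of the strip: $\phi^a(t_1, +\infty) = a(0)I$. Consequently $f_{(t_1, +\infty), w}(\phi^a) = a(0)$ for any unit vector $w$ (in particular for $w = v_j(t_1)$), so it suffices to produce a single $a \in C(\overline{\R})$ with $a(0) = 0$ but $f_{(x_1, x_2), v}(\phi^a) \neq 0$.

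First I would record the integral representation
\[
f_{(x_1, x_2), v}(\phi^a) = \langle \phi^a(x_1, x_2)v, v\rangle = \int_{-\infty}^{\infty} a\!\left(\frac{-x_1 + s}{2\sqrt{x_2(x_1^2 + 1)}}\right) q_v(s)\, e^{-s^2}\, ds,
\]
which follows from the explicit formula for $\phi^a$ together with the identity $|\langle H(s), v\rangle|^2 = q_v(s)e^{-s^2}$, where $q_v$ is the nonzero, non-negative polynomial introduced just before Lemma \ref{separar-l-c}. Since $(x_1, x_2) \in \Pi$, the number $2\sqrt{x_2(x_1^2 + 1)}$ is a finite positive real, so as $s$ ranges over $\R$ the argument of $a$ ranges over all of $\R$ and vanishes precisely at $s = x_1$.

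Then I would take $a(s) = s^2/(s^2 + 1)$, which lies in $C(\overline{\R})$, satisfies $a(0) = 0$, and is strictly positive for every $s \neq 0$. With this choice the integrand above is non-negative and vanishes only at $s = x_1$ and at the finitely many roots of $q_v$, so the integral is strictly positive; hence $f_{(x_1, x_2), v}(\phi^a) > 0 = a(0) = f_{(t_1, +\infty), w}(\phi^a)$, which is the desired separation. I do not expect a genuine obstacle here: the only points needing care are that $x_2$ is a true finite positive number (so the change-of-variables formula for $\phi^a$ is valid and the argument of $a$ actually sweeps through $0$) and that $q_v$ has only finitely many zeros, so its positivity almost everywhere is not destroyed inside the integral.
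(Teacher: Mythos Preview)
Your proposal is correct and follows essentially the same approach as the paper: both exploit that $\phi^a(t_1,+\infty)=a(0)I$ and pick a symbol in $C(\overline{\R})$ vanishing only at $0$ so that the integral $\int a(\cdots)q_v(s)e^{-s^2}\,ds$ is strictly positive. The only difference is cosmetic --- the paper uses $a(s)=|s|/(s^2+1)$ where you use $a(s)=s^2/(s^2+1)$ --- and your write-up is in fact slightly more careful about why the integrand vanishes only on a null set.
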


\begin{proof}
	Let $a(s)=|s|/(s^2 +1)$. So $a(0)=0$ and  $f_{(t_1, +\infty),w}(\phi^{a})=a(0) = 0$.  
	Since $q_v(s)=0$ at most at finite number of points, 
	$$f_{(x_1, x_2),v}(\phi^{a})=
	\int_{-\infty}^{\infty} a\left(\frac{-x_{1}+s}{2\sqrt{x_{2}(x_{1}^{2} +1)}}\right) q_v (s)e^{-s^2}ds >0.$$
	\end{proof}

Next we will separate the pure states associated to the points $(t_1, t_2) \in\Pi$ using continuous symbols indexed by
 $\alpha>0$ and $r \in \R$.
 We introduce 
 $$a^r_{\alpha}(y)=\frac{1}{\alpha} a([y-r]/\alpha),$$
 where
 $$a(y)= \left\{ 
 \begin{array}{lcl}
 	0       &  \text{if} & y \notin [-1 ,1]\\ \\  
 	1 + y &   \text{if} & y \in [-1,  0]\\ \\ 
 	1 - y &   \text{if} & y \in [0, 1].
 \end{array}
 \right.$$
Note that the family of functions $a_{\alpha}:=a_{\alpha}^{0}$ is an approximate identity in 
$L^1(\R, d\mu)$. Since
$h_j h_k \in L^1(\R)$, we have pointwise convergence in
$$\lim_{\alpha \rightarrow 0} (a_{\alpha} * h_j h_k)(y) =( h_j h_k)(y)$$
because $ h_j h_k $ is continuous. 

Since $\Phi: \Pi \rightarrow \Pi$ is a homeomorphism and
$\phi^{a}=\gamma^{a} \circ \Phi^{-1}$, we consider the matrix-valued function $\gamma^a$
in order to carry out the separation of pure states associated to the points in $\Pi$.

\begin{lem}
Let  $(x_1, x_2) \in \Pi$ and $a_{\frac{\alpha}{2\sqrt{x_2}}}^r$, with $\alpha>0$ and  $r \in \R$.
Then the matrix-valued function $\gamma^{a_{\frac{\alpha}{2\sqrt{x_2}}}^r}$ satisfies 

$$\lim_{\alpha \rightarrow 0} \gamma^{a_{\frac{\alpha}{2\sqrt{x_2}}}^r}(x_1,x_2) =
2\sqrt{x_2} H(x_{1}+2 \sqrt{x_{2}}r)[H(x_{1}+2 \sqrt{x_{2}}r)]^T.$$
\end{lem}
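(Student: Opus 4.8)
The plan is to recognize the integral in \eqref{sim-a-3} with symbol $a^{r}_{\alpha/(2\sqrt{x_{2}})}$ as a convolution of the continuous matrix-valued function $H(\cdot)[H(\cdot)]^{T}$ against a rescaled tent kernel, centered at the fixed point $c:=x_{1}+2\sqrt{x_{2}}\,r$, and then to invoke the pointwise approximate-identity convergence already recorded in the discussion preceding this lemma.

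First I would substitute $a=a^{r}_{\alpha/(2\sqrt{x_{2}})}$ into \eqref{sim-a-3} and simplify the argument of $a$. Writing $\beta=\alpha/(2\sqrt{x_{2}})$, the defining relation $a^{r}_{\beta}(y)=\beta^{-1}a\big((y-r)/\beta\big)$ gives, for $y=(-x_{1}+y_{1})/(2\sqrt{x_{2}})$,
$$a^{r}_{\alpha/(2\sqrt{x_{2}})}\Big(\tfrac{-x_{1}+y_{1}}{2\sqrt{x_{2}}}\Big)=\frac{2\sqrt{x_{2}}}{\alpha}\,a\Big(\frac{y_{1}-x_{1}-2\sqrt{x_{2}}\,r}{\alpha}\Big).$$
Since the tent function $a$ is even, the right-hand side equals $2\sqrt{x_{2}}\,a_{\alpha}\big(c-y_{1}\big)$ with $a_{\alpha}=a^{0}_{\alpha}$ the approximate identity introduced above. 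Substituting back and taking the convolution entrywise yields
$$\gamma^{a^{r}_{\alpha/(2\sqrt{x_{2}})}}(x_{1},x_{2})=2\sqrt{x_{2}}\int_{\R}a_{\alpha}\big(c-y_{1}\big)\,H(y_{1})[H(y_{1})]^{T}\,dy_{1}=2\sqrt{x_{2}}\,\big(a_{\alpha}*(HH^{T})\big)(c).$$

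Next, for each $j,k$ the entry $h_{j-1}h_{k-1}$ belongs to $L^{1}(\R)\cap C(\R)$, so by the approximate-identity property recalled before the statement, $(a_{\alpha}*(h_{j-1}h_{k-1}))(c)\to(h_{j-1}h_{k-1})(c)$ as $\alpha\to0^{+}$. Letting $\alpha\to0^{+}$ in the last display entry by entry gives $\lim_{\alpha\to0}\gamma^{a^{r}_{\alpha/(2\sqrt{x_{2}})}}(x_{1},x_{2})=2\sqrt{x_{2}}\,H(c)[H(c)]^{T}$ with $c=x_{1}+2\sqrt{x_{2}}\,r$, which is the asserted identity.

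There is no serious obstacle; the argument is essentially bookkeeping, the only points needing a line of care being the evenness of $a$ (so the rescaled symbol is a genuine convolution kernel centered at $c$) and the fact that $x_{2}>0$ is held fixed, so $\beta=\alpha/(2\sqrt{x_{2}})\to0$ exactly when $\alpha\to0$. If one prefers to avoid citing the approximate-identity fact, the change of variable $t=(y_{1}-c)/\alpha$ turns the integral into $2\sqrt{x_{2}}\int_{\R}a(t)\,H(c+\alpha t)[H(c+\alpha t)]^{T}\,dt$, and since $a$ is supported on $[-1,1]$ with $\int_{\R}a=1$ while $HH^{T}$ is continuous, dominated convergence yields the limit $2\sqrt{x_{2}}\,H(c)[H(c)]^{T}$ directly.
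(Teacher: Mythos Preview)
Your proof is correct and follows essentially the same route as the paper: both substitute the symbol into \eqref{sim-a-3}, simplify the argument of $a$ to recognize the integral as $2\sqrt{x_{2}}\,(a_{\alpha}*HH^{T})(x_{1}+2\sqrt{x_{2}}r)$, and then invoke the approximate-identity convergence entrywise. The paper carries out the same algebra componentwise while you work at the matrix level and make the evenness of $a$ explicit; your final alternative via the substitution $t=(y_{1}-c)/\alpha$ and dominated convergence is a nice self-contained variant not in the paper, but the core argument is the same.
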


\begin{proof}
Take into account that $\{a_{\alpha}\}$ is an approximate identity and
$a_{\alpha}(y-x)=a_{\alpha}(x-y)$ for any $x, y \in  \R$.
Calculate the entries of  $\gamma^{a_{\frac{\alpha}{2\sqrt{x_2}}}^r}$:
\begin{eqnarray*}
	\gamma_{jk}^{a_{\frac{\alpha}{2\sqrt{x_2}}}^r}(x_1,x_2) &=& 
	\int_{-\infty}^{\infty} a_{\frac{\alpha}{2\sqrt{x_2}}}^r \left( \frac{-x_{1}+y}{2\sqrt{x_{2}}} \right) 
	(h_{j-1}h_{k-1})(y) dy\\
	&=&	\int_{-\infty}^{\infty} a_{\frac{\alpha}{2\sqrt{x_2}}}\left( \frac{-x_{1}+y}{2\sqrt{x_{2}}} - r\right) 
	(h_{j-1}h_{k-1})(y) dy\\
	&=& \frac{2\sqrt{x_2}}{\alpha} 	\int_{-\infty}^{\infty} 
	a\left( \frac{-(x_{1}+2 \sqrt{x_{2}}r) +y}{2 \sqrt{x_{2}}}\cdotp \frac{2 \sqrt{x_2}}{\alpha} \right) (h_{j-1}h_{k-1})(y) dy\\
	&=& 2\sqrt{x_2} \int_{-\infty}^{\infty} \frac{1}{\alpha} a\left( \frac{-(x_{1}+2 \sqrt{x_{2}}r) +y}{\alpha} \right)
	(h_{j-1}h_{k-1})(y) dy \\
	&=& 2\sqrt{x_2} \int_{-\infty}^{\infty} a_{\alpha}(y -(x_{1}+2 \sqrt{x_{2}}r) )  (h_{j-1}h_{k-1})(y) dy \\
	&=& 2\sqrt{x_2} \int_{-\infty}^{\infty} a_{\alpha}((x_{1}+2 \sqrt{x_{2}}r)  -y)  (h_{j-1}h_{k-1})(y) dy \\
	&=& 2\sqrt{x_2}  ((h_{j-1}h_{k-1})*a_{\alpha})(x_{1}+2 \sqrt{x_{2}}r) .
\end{eqnarray*}

Since $a_{\alpha}$  is an approximate identity, we have that
\begin{eqnarray*}
	\lim_{\alpha \rightarrow 0} \gamma_{jk}^{a_{\frac{\alpha}{2\sqrt{x_2}}}^r}(x_1,x_2) 
	&=&\lim_{\alpha \rightarrow 0} 2\sqrt{x_2}  ((h_{j-1}h_{k-1})*a_{\alpha})(x_{1}+2 \sqrt{x_{2}}r)\\
	&=& 2\sqrt{x_2} \lim_{\alpha \rightarrow 0} ((h_{j-1}h_{k-1})*a_{\alpha})(x_{1}+2 \sqrt{x_{2}}r)\\
	&=& 2\sqrt{x_2} (h_{j-1}h_{k-1})(x_{1}+2 \sqrt{x_{2}}r).
\end{eqnarray*}
\end{proof}

Take into account that
\begin{eqnarray*}
\lim_{\alpha \rightarrow 0} f_{(x_1,x_2),v}(\gamma^{a_{\frac{\alpha}{2\sqrt{x_2}}}^r}) 
&=& \lim_{\alpha \rightarrow 0} \langle  \gamma^{a_{\frac{\alpha}{2\sqrt{x_2}}}^r}(x_1,x_2) v, v  \rangle \\
&=&  2\sqrt{x_2} \langle H(x_{1}+2 \sqrt{x_{2}}r)[H(x_{1}+2 \sqrt{x_{2}}r)]^T v, v  \rangle\\
&=& 2\sqrt{x_2} |\langle H(x_{1}+2 \sqrt{x_{2}}r), v \rangle|^2 .
\end{eqnarray*}

For $v$ and $w$ unit vectors,   and  $(t_1, t_2)\neq (x_1, x_2)$ $\in \Pi$, 
the following result says that  
$$f_{(t_1, t_2),w}(\gamma^{a_{\frac{\alpha}{4\sqrt{x_2t_2}}}^{r}}(t_1, t_2))  \neq 
f_{(x_1, x_2),v}(\gamma^{a_{\frac{\alpha}{4\sqrt{x_2t_2}}}^{r}}(x_1, x_2))$$
for some  $\alpha >0$ and $r \in \R$. 

\begin{lem}\label{separar-centro}
Let  $v$, $w$  $\in \C^n$ be unit vectors, $(t_1, t_2)$, $(x_1, x_2)$ $\in \Pi$ and $a_{\frac{\alpha}{4\sqrt{x_2 t_2}}}^r$, with $\alpha >0$ and $r\in \R$. If
\begin{equation}\label{separar3}
	f_{(x_1,x_2),v}(\gamma^{a_{\frac{\alpha}{4\sqrt{x_2t_2}}}^r})  =
	f_{(t_1,t_2),w}(\gamma^{a_{\frac{\alpha}{4\sqrt{x_2t_2}}}^r}) \quad \forall \alpha >0, r \in \R,
\end{equation}	
then $(t_1, t_2)= (x_1, x_2)$.
\end{lem}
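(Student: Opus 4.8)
The plan is to push $\alpha\to 0^+$ in the hypothesis (\ref{separar3}), which collapses the two Toeplitz-type averages to pointwise values of Hermite products, and then to extract $(t_1,t_2)=(x_1,x_2)$ from the resulting identity between two Gaussian-weighted polynomials; the key point will be that a nonzero polynomial times the exponential of a quadratic can be a polynomial only when that quadratic vanishes identically.

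First I would invoke the preceding lemma at \emph{both} base points simultaneously. Writing $\alpha/(4\sqrt{x_2 t_2})=\beta/(2\sqrt{x_2})$ with $\beta=\alpha/(2\sqrt{t_2})\to 0^+$, the preceding lemma (pointwise convergence of $h_{j-1}h_{k-1}\ast a_\beta$, $a_\beta$ an approximate identity) gives $f_{(x_1,x_2),v}\big(\gamma^{a_{\alpha/(4\sqrt{x_2 t_2})}^{r}}\big)\to 2\sqrt{x_2}\,|\langle H(x_1+2\sqrt{x_2}\,r),v\rangle|^2$ as $\alpha\to 0^+$; writing the \emph{same} scaling as $\beta'/(2\sqrt{t_2})$ with $\beta'=\alpha/(2\sqrt{x_2})\to 0^+$ gives, symmetrically, $f_{(t_1,t_2),w}\big(\gamma^{a_{\alpha/(4\sqrt{x_2 t_2})}^{r}}\big)\to 2\sqrt{t_2}\,|\langle H(t_1+2\sqrt{t_2}\,r),w\rangle|^2$. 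Passing to the limit in (\ref{separar3}) then yields
$$2\sqrt{x_2}\,|\langle H(x_1+2\sqrt{x_2}\,r),v\rangle|^2=2\sqrt{t_2}\,|\langle H(t_1+2\sqrt{t_2}\,r),w\rangle|^2\qquad\text{for all }r\in\R.$$

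Next I would convert this into a polynomial identity. Recall that $|\langle H(s),v\rangle|^2=q_v(s)\,e^{-s^2}$ with $q_v$ a nonzero nonnegative polynomial (likewise $q_w$ for $w$, since $v,w$ are unit vectors and the $d_jH_j$ are linearly independent). Substituting $s=x_1+2\sqrt{x_2}\,r$ — so $r$ runs over $\R$ and $t_1+2\sqrt{t_2}\,r=\rho s+c$ with $\rho:=\sqrt{t_2/x_2}>0$, $c:=t_1-\rho x_1$ — the displayed equality becomes $\sqrt{x_2}\,q_v(s)e^{-s^2}=\sqrt{t_2}\,q_w(\rho s+c)e^{-(\rho s+c)^2}$, i.e.
$$\sqrt{x_2}\,q_v(s)=\sqrt{t_2}\,e^{-c^2}\,q_w(\rho s+c)\,e^{(1-\rho^2)s^2-2\rho c\,s},\qquad s\in\R.$$
Since $\rho>0$, $s\mapsto q_w(\rho s+c)$ is still a nonzero polynomial, so the right side is a nonzero polynomial times $e^{(1-\rho^2)s^2-2\rho c\,s}$; being equal to the polynomial $\sqrt{x_2}\,q_v(s)$, a comparison of growth as $|s|\to\infty$ forces $(1-\rho^2)s^2-2\rho c\,s\equiv 0$: if $1-\rho^2\neq 0$ the factor $e^{(1-\rho^2)s^2}$ out-grows (or out-decays) every power of $|s|$, a contradiction, so $\rho=1$; and with $\rho=1$ a nonzero polynomial cannot equal a nonzero polynomial times $e^{-2c\,s}$ unless $c=0$. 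Hence $\rho=1$ and $c=0$, which read back as $t_2=x_2$ and $t_1=x_1$, i.e. $(t_1,t_2)=(x_1,x_2)$.

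The step I expect to require the most care — though it is not deep — is the first one: one has to keep careful track of how the single parameter $\alpha/(4\sqrt{x_2 t_2})$ pairs against each of the two distinct base points, so that the approximate-identity argument of the preceding lemma applies at each, and then check that the limit may be taken entrywise (equivalently, after pairing with $v$ and with $w$, using $\langle H(s)[H(s)]^T v,v\rangle=|\langle H(s),v\rangle|^2$). The growth dichotomy in the second half is routine, provided the subcases $1-\rho^2\neq 0$ and ($\rho=1$, $\rho c\neq 0$) are handled separately.
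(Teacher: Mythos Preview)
Your proof is correct and follows essentially the same approach as the paper: pass to the limit $\alpha\to 0$ via the approximate-identity lemma at both base points to obtain the pointwise identity $2\sqrt{x_2}\,|\langle H(x_1+2\sqrt{x_2}r),v\rangle|^2 = 2\sqrt{t_2}\,|\langle H(t_1+2\sqrt{t_2}r),w\rangle|^2$, then use a growth comparison between polynomials and exponentials of quadratics to force the parameters to agree. Your substitution $s=x_1+2\sqrt{x_2}r$ and explicit case split on $1-\rho^2$ and $c$ is a slightly more detailed variant of the paper's final step, which keeps $r$ as the variable and argues directly that the exponential factor $e^{4(x_2-t_2)r^2+4(x_1\sqrt{x_2}-t_1\sqrt{t_2})r+x_1^2-t_1^2}$ must be constant.
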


\begin{proof}	
It is easy to see that
$$\gamma_{jk}^{a_{\frac{\alpha}{4\sqrt{x_2t_2}}}^r}(x_1,x_2)
=2\sqrt{x_2}  ((h_{j-1}h_{k-1})*a_{\frac{\alpha}{2\sqrt{t_2}}})(x_{1}+2 \sqrt{x_{2}}r)$$ and
$$\gamma_{jk}^{a_{\frac{\alpha}{4\sqrt{x_2t_2}}}^r}(t_1,t_2)
=2\sqrt{t_2}  ((h_{j-1}h_{k-1})*a_{\frac{\alpha}{2\sqrt{x_2}}})(t_{1}+2 \sqrt{t_{2}}r).$$

Since equation (\ref{separar3}) holds for all $\alpha >0$, we can take the limit  in both sides of it
when $\alpha \rightarrow 0$; then  for all $r  \in \R$ we have
\begin{eqnarray*}
	\lim_{\alpha \rightarrow 0}f_{(x_1,x_2),v}(\gamma^{a_{\frac{\alpha}{4\sqrt{x_2t_2}}}^r})  &=&
	\lim_{\alpha \rightarrow 0}	f_{(t_1,t_2),w}(\gamma^{a_{\frac{\alpha}{4\sqrt{x_2t_2}}}^r}), \\
	2\sqrt{x_2}|\langle H(2\sqrt{x_2}r + x_1) , v \rangle|^2 &= & 2\sqrt{t_2}|\langle H(2\sqrt{t_2}r + t_1) , w \rangle|^2, \\
	\sqrt{x_2} e^{-(2\sqrt{x_2}r + x_1)^2} q_v (2\sqrt{x_2}r + x_1) & = & \sqrt{t_2} e^{-(2\sqrt{t_2}r + t_1)^2} q_w (2\sqrt{t_2}r + t_1),
\end{eqnarray*}	
where $q_v$ and $q_w$ are polynomials of degree at most $2n-2$. Thus there is a constant $C\in\R$ such that 
\begin{equation}\label{separa4}
	e^{4(x_2 -t_2)r^2 + 4(x_1\sqrt{x_2}-t_1\sqrt{t_2})r + x_1^2 -t_1^2} = C\quad \forall r \in \R.
\end{equation}
Therefore (\ref{separa4}) holds if and only if  $x_1=t_1$ and $x_2= t_2$. 
\end{proof}

For the separation of pure states attached to the same fiber we will use the following lemma.

\begin{lem}[\cite{A-C-R-N}]\label{H-indep}
Let $y_1, ..., y_n$ be real numbers different from each other. Then $\{H(y_1), ..., H(y_n)\} $ is a basis for
 $\C^n$.
\end{lem}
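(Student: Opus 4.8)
The plan is to reduce the statement to the non-vanishing of a Vandermonde determinant. Since $H(y_1),\dots,H(y_n)$ are $n$ vectors in $\C^n$, it suffices to prove that they are linearly independent, i.e. that the $n\times n$ matrix $\mathcal{M}=\bigl[\,H(y_1)\ \cdots\ H(y_n)\,\bigr]$ is invertible.

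First I would use the factorization of the Hermite vector recorded in the proof of Lemma \ref{propiedades}: $H(s)=e^{-s^2/2}\,C\,S(s)$, where $S(s)=(1,s,\dots,s^{n-1})^T$ and $C\in M_n(\C)$ is lower triangular with nonzero diagonal entries (the leading coefficients of the Hermite functions), so that $\det C\neq 0$. Reading this column by column gives
$$\mathcal{M}=C\,\mathcal{V}\,\mathrm{diag}\!\left(e^{-y_1^2/2},\dots,e^{-y_n^2/2}\right),$$
where $\mathcal{V}=\bigl[\,S(y_1)\ \cdots\ S(y_n)\,\bigr]$ is the Vandermonde matrix of $y_1,\dots,y_n$. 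Taking determinants,
$$\det\mathcal{M}=\det C\cdot\prod_{1\le i<j\le n}(y_j-y_i)\cdot\prod_{j=1}^{n}e^{-y_j^2/2}\neq 0,$$
since the $y_j$ are pairwise distinct; hence $\{H(y_1),\dots,H(y_n)\}$ is a basis of $\C^n$.

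There is essentially no obstacle here; the only input beyond elementary linear algebra is the polynomial structure of the Hermite functions — each $h_m$ is a polynomial of exact degree $m$ times $e^{-s^2/2}$ — which is precisely what forces $C$ to be triangular with nonzero diagonal, and which is already spelled out in the excerpt. If one prefers an argument not relying on that factorization: assuming $\sum_{j}c_jH(y_j)=0$ means $\sum_j c_j\,p(y_j)e^{-y_j^2/2}=0$ for every polynomial $p$ of degree $\le n-1$, because the Hermite polynomials of degrees $0,\dots,n-1$ span that space; choosing $p(y)=\prod_{i\neq k}(y-y_i)$ then yields $c_k\,p(y_k)\,e^{-y_k^2/2}=0$, and since $p(y_k)\neq 0$ we get $c_k=0$ for each $k$.
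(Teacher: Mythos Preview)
Your proof is correct; the Vandermonde argument via the factorization $H(s)=e^{-s^2/2}CS(s)$ from Lemma~\ref{propiedades} is exactly the right mechanism, and your alternative Lagrange-interpolation argument is equally valid. The paper does not supply its own proof of this lemma but simply cites it from \cite{A-C-R-N}, so there is nothing further to compare.
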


\begin{lem}\label{misma-fibra}
Let  $w, v \in \C^n$ unit vectors and  $(x_1, x_2) \in  \Pi$. Take the matrix-valued functions $\gamma^{a_{\frac{\alpha}{2\sqrt{x_2}}}^{r_1}}$ and
$\gamma^{a_{\frac{\alpha}{2\sqrt{x_2}}}^{r_2}}$, where $r_1, r_2 \in \R$ and $\alpha >0$. Suppose that
\begin{equation}\label{separar5}
	f_{(x_1,x_2),w}(\gamma^{a_{\frac{\alpha}{2\sqrt{x_2}}}^{r_1}}
	\gamma^{a_{\frac{\alpha}{2\sqrt{x_2}}}^{r_2}})  =
	f_{(x_1,x_2),v}(\gamma^{a_{\frac{\alpha}{2\sqrt{x_2}}}^{r_1}}
	\gamma^{a_{\frac{\alpha}{2\sqrt{x_2}}}^{r_2}} ) \quad \forall \alpha >0, r_1,r_2 \in \R.
\end{equation}
Then $ w= \lambda v$, where $\lambda$ is a uni-modular complex number; that is,   
$f_{(x_1,x_2), w} =f_{(x_1,x_2), v}$.
\end{lem}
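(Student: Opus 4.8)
The plan is to exploit the limit computed just before the lemma, which gives, for each $r\in\R$,
$$\lim_{\alpha\to 0}\gamma^{a_{\frac{\alpha}{2\sqrt{x_2}}}^{r}}(x_1,x_2)=2\sqrt{x_2}\,H(x_1+2\sqrt{x_2}r)[H(x_1+2\sqrt{x_2}r)]^T=:2\sqrt{x_2}\,P_r,$$
where $P_r$ is the (rank-one) matrix associated to the vector $H(\rho)$ with $\rho=x_1+2\sqrt{x_2}r$. As $r$ ranges over $\R$, $\rho$ ranges over all of $\R$. First I would take the limit $\alpha\to 0$ on both sides of \eqref{separar5}; since the product of the two matrix-valued functions converges to $4x_2\,P_{r_1}P_{r_2}$, hypothesis \eqref{separar5} becomes
$$\langle P_{r_1}P_{r_2}\,w,w\rangle=\langle P_{r_1}P_{r_2}\,v,v\rangle\qquad\forall r_1,r_2\in\R.$$
Writing $H_1=H(x_1+2\sqrt{x_2}r_1)$ and $H_2=H(x_1+2\sqrt{x_2}r_2)$, and noting $P_rz=\langle z,\overline{H(\rho)}\rangle H(\rho)$ (care must be taken here: $P_r=H(\rho)H(\rho)^T$ is the transpose, not the conjugate transpose, so $P_r z=(H(\rho)^Tz)H(\rho)$), the identity reads
$$(H_1^TH_2)(H_2^Tw)\langle H_1,w\rangle=(H_1^TH_2)(H_2^Tv)\langle H_1,v\rangle\qquad\forall r_1,r_2.$$

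Next I would divide out by the scalar $H_1^TH_2$, which is nonzero for all but finitely many pairs (as an analytic — indeed, a polynomial-times-Gaussian — function of $r_1,r_2$ it is not identically zero, e.g.\ when $r_1=r_2$ it equals $|H(\rho)|^2>0$), and conclude by continuity that
$$(H_2^Tw)\,\overline{(H_1^*w)}\ \text{-type products agree; more precisely } (H_2^Tw)\langle H_1,w\rangle=(H_2^Tv)\langle H_1,v\rangle$$
for all $r_1,r_2\in\R$. Now invoke Lemma \ref{H-indep}: choosing $n$ distinct values of $r_1$ makes $\{H_1\}$ a basis of $\C^n$, and likewise for $r_2$; hence the rank-one matrices $(H_2^Tw)\overline{H_1}$ ... — more cleanly, fixing $r_2$ and letting $r_1$ vary over $n$ distinct values forces the vectors $\overline{(H_2^Tw)}\,\langle\cdot,w\rangle$ applied to a basis to agree, giving $(H_2^Tw)\,w=(H_2^Tv)\,v$ as functionals, hence as vectors, for every $r_2$. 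Thus $v$ and $w$ are parallel: $w=\lambda v$ for some scalar $\lambda$, and since both are unit vectors $|\lambda|=1$. Then for any $M$, $\langle Mw,w\rangle=|\lambda|^2\langle Mv,v\rangle=\langle Mv,v\rangle$, i.e.\ $f_{(x_1,x_2),w}=f_{(x_1,x_2),v}$.

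The main obstacle I anticipate is bookkeeping around the transpose-versus-adjoint distinction: $H(s)$ is real-valued (Hermite functions are real), so $H(\rho)^T\overline{H(\rho)}=|H(\rho)|^2$ and $P_r=H(\rho)H(\rho)^T$ is a genuine (real, symmetric, positive semidefinite) rank-one projector up to scaling, which makes the manipulations above legitimate; but one must state this realness explicitly and be careful that $\langle H_1,w\rangle=H_1^T\overline{w}$ while $H_2^Tw$ is bilinear, so the two factors carry $w$ differently. A second, minor point is justifying that $H_1^TH_2$ vanishes only on a set of empty interior so that dividing by it and then re-extending by continuity is valid; this follows because $e^{(\rho_1^2+\rho_2^2)/2}H_1^TH_2$ is a polynomial in $(\rho_1,\rho_2)$ that is positive on the diagonal, hence not identically zero. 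Once these are pinned down, the conclusion $w=\lambda v$ with $|\lambda|=1$ is immediate, and the equality of pure states $f_{(x_1,x_2),w}=f_{(x_1,x_2),v}$ follows at once.
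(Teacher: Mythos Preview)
Your proposal is correct and follows essentially the same route as the paper: take the limit $\alpha\to 0$ to reduce \eqref{separar5} to the rank-one identity, cancel the real scalar $H_1^TH_2$ (nonzero off a thin set, then extend by continuity), and finish via Lemma~\ref{H-indep}. The only difference is in the last step: the paper first sets $r_1=r_2$ to obtain $|\langle w,H(r)\rangle|=|\langle v,H(r)\rangle|$, writes $\langle w,H(r)\rangle=e^{i\theta(r)}\langle v,H(r)\rangle$, and shows $\theta$ is constant before invoking Lemma~\ref{H-indep}; your argument instead fixes $r_2$, varies $r_1$ over a basis to get a vector identity directly, and reads off $w=\lambda v$---a slightly more streamlined finish, modulo the conjugation bookkeeping you already flagged (the correct relation is $\overline{(H_2^Tw)}\,w=\overline{(H_2^Tv)}\,v$, and one should note $H_2^Tw\not\equiv 0$ since the Hermite functions are linearly independent).
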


\begin{proof}
Define $\beta_i = 2\sqrt{x_2}r _i + x_1$,  $i=1,2$. We have seen that
$$\gamma^{a_{\frac{\alpha}{2\sqrt{x_2}}}^{r}}(x_1,x_2)=
2\sqrt{x_2}  (HH^T*a_{\alpha})(x_{1}+2 \sqrt{x_{2}}r) .$$

Take the limit when $\alpha \rightarrow 0 $  in both sides of the equality  (\ref{separar5}):
\begin{eqnarray*}
	\lim_{\alpha \rightarrow 0}	f_{(x_1,x_2),w}(\gamma^{a_{\frac{\alpha}{2\sqrt{x_2}}}^{r_1}}
	\gamma^{a_{\frac{\alpha}{2\sqrt{x_2}}}^{r_2}}) &=&
	\lim_{\alpha \rightarrow 0}		f_{(x_1,x_2),v}(\gamma^{a_{\frac{\alpha}{2\sqrt{x_2}}}^{r_1}}
	\gamma^{a_{\frac{\alpha}{2\sqrt{x_2}}}^{r_2}}),\\
	4x_2\langle H(\beta_1)[H(\beta_1)]^T H(\beta_2)[H(\beta_2)]^T w, w \rangle 
	&= & 
	4x_2\langle H(\beta_1)[H(\beta_1)]^T H(\beta_2)[H(\beta_2)]^T v, v \rangle ,\\
	\overline{w}^T \left( H(\beta_1)[H(\beta_1)]^T  \right) \left( H(\beta_2)[H(\beta_2)]^T  \right) w
	& = & \overline{v}^T \left( H(\beta_1)[H(\beta_1)]^T  \right) \left( H(\beta_2)[H(\beta_2)]^T  \right) v.
\end{eqnarray*}

The real number $[H(\beta_1)]^T H(\beta_2)$ could be zero only for a finite number 
of values of $\beta_1$ and $\beta_2$. By continuity,
$$\overline{w}^T H(\beta_1) [H(\beta_2)]^T   w
=  \overline{v}^T  H(\beta_1) [H(\beta_2)]^T  v.$$
Without lost of generality we can assume that $x_1=0, x_2=(1/4)$;	then
\begin{eqnarray*}
	\overline{w}^T H(r_1)[H(r_2)]^T  w & = & \overline{v}^T  H(r_1)[H(r_2)]^T   v,\\
	\overline{\langle w, H(r_1) \rangle} \langle w, H(r_2) \rangle &=&
	\overline{\langle v, H(r_1) \rangle} \langle v, H(r_2) \rangle.
\end{eqnarray*}
This equality holds for all $r_1$ and $r_2$. In particular take $r=r_2=r_1$;
thus $|\langle w, H(r) \rangle|=|\langle v, H(r) \rangle|$ for all $r$.
We can write  $\langle w, H(r) \rangle=\langle v, H(r) \rangle e^{i\theta(r)}$ for all $r$.
Then
$$\overline{\langle v, H(r_1) \rangle} \langle v, H(r_2) \rangle  e^{i\theta(r_2)-i\theta(r_1)}
= \overline{\langle v, H(r_1) \rangle} \langle v, H(r_2) \rangle. $$

Thus  $e^{i\theta(r_2)-i\theta(r_1)}=1$ for all  $r_1, r_2$, which means that 
$\langle w, H(y) \rangle =e^{i\theta_0} \langle v, H(y) \rangle $ for all $y \in \R$ and some constant
$\theta_0$. Take $u=w- e^{i\theta_0}v$, then $\langle u, H(y) \rangle =0$.  
According to Lemma \ref{H-indep}, the set $\{H(y_k)\}_{k=1}^n $ is a basis for $\C^n$ and
$$\langle u, H(y_k) \rangle =0 , \quad k=1,...,n.$$
Therefore $u$ must be the zero vector.
\end{proof}

The $C^*$-algebra generated by all Toeplitz operators $T_c$ (with nilpotent symbol) is
large enough to fully describe its space of irreducible representations, for this reason 
we confine ourselves to consider a subclass of nilpotent symbols. 

\subsection*{Acknowledgments} The first author is grateful to CONAHCyT for its role in supporting this work through graduate and postdoctoral fellowships.  The authors also thank Matthew G. Dawson for  his useful comments and suggestions that improved this work. 

Some of the results in this paper were obtained in the research of the first author for her doctoral dissertation \cite{tesisYessica}. 



\begin{thebibliography}{7}
	

	\bibitem{vasilev-SPB} {Vasilevski, N.L.:}
{{On the structure of Bergman and poly-Bergman spaces}}.
{Integr. Equ. Oper. Theory.}
\textbf{33} (4), 471-488 (1999)

\bibitem{vasilev-SPF} {Vasilevski, N.L.:} 
{{Poly-Fock spaces}}.
{In: Adamyan, V.M. (et. al. eds.) Differential Operators and Related Topics, pp. 371-386,
	Operator Theory Advances and Applications 117}, 
Springer, Basel AG (2000)
	    \bibitem{Balk} {Balk, M.\ B.:}
    {{Polyanaltic functions and their generalizations}}. {Complex Analysis I: Entire and Meromorphic Functions and their Generalization, Encyclopaedia of Mathematical Sciences} \textbf{85}, Springer 1997. 
    \bibitem{QV-Ball1} {Quiroga-Barranco, R., Vasilevski, N.L.:}
    {{Commutative $C^*$-algebras of Toeplitz operators on the unit ball, I. 
    		Bargmann type transforms and spectral representations of Toeplitz operators}}. 
    {Integr. Equ. Oper. Theory.} 
    \textbf{59}  (3), 379-419 (2007)
    	\bibitem{QV-Ball2} {Quiroga-Barranco, R., Vasilevski, N.L.:}
    {{Commutative $C^*$-algebras of Toeplitz operators on the unit ball, II. 
    		Geometry of the level sets of symbols}}. 
    {Integr. Equ. Oper. Theory.} 
    \textbf{60}  (1), 89-132 (2008)
    \bibitem{LibroVasilevski} {Vasilevski, N.L.:}
    {{Commutative Algebras of Toeplitz Operators on the Bergman Space}}. 
    {Operator Theory: Advances and Applications 185}, 
    Birkh\"auser Verlag,  Boston (2008)
    \bibitem{R-SN} {Ram\'irez-Ortega, J., S\'anchez-Nungaray, A.:}
    {{Toeplitz operators with vertical symbols acting on the poly-Bergman spaces of the upper half-plane}}.
    {Complex Anal. Oper. Theory}
    \textbf{9}  (8), 1801-1817 (2015)
    \bibitem{R-RM-V}{Ram\'irez-Ortega, J., Ram\'irez-Mora, M., S\'anchez-Nungaray, A.:}
    {{Toeplitz Operators with Vertical Symbols Acting on the Poly-Bergman Spaces of the Upper Half-Plane II}}.
    {Complex Analysis and Operator Theory}
    \textbf{13}  (5), 2443-2462 (2019)
    \bibitem{R-LL} {Loaiza, M., Ram\'irez-Ortega, J.:}
    {{Toeplitz operators with homogeneous symbols acting on the poly-Bergman spaces of the upper half-plane}}.
    {Integr. Equ. Oper. Theory.} 
    \textbf{87}  (3), 391-410 (2017)
    \bibitem{R-RM-H}{Ram\'irez-Ortega, J., Ram\'irez-Mora, M., Morales-Ramos, M.:}
    {{Algebra Generated by a Finite Number of Toeplitz Operators with Homogeneous
    		Symbols Acting on the Poly-Bergman Spaces}}.
    {Operator Theory: Advances and Applications}
    279, 383-402 (2020)
    \bibitem{Hutnik} {Hutn\'ik, O.:}
    {{On Toeplitz-type operator related to wavelets}}. 
    {Integr. Equ. Oper. Theory} 
    \textbf{63} (1), 29-46 (2009)
    \bibitem{Hutnik-2} {Hutn\'ik, O.:}
    {{Wavelets from Laguerre polynomials and Toeplitz-type operators}}. 
    {Integr. Equ. Oper. Theory} 
    \textbf{71} (3), 357-388 (2011)
    \bibitem{A-C-R-N}{S\'anchez-Nungaray, A., Gonz\'alez-Flores, C., L\'opez-Mart\'inez R., Arroyo-Neri J.:}
    {{Toeplitz operators with horizontal symbols acting on the poly-Fock spaces}}.
    {Hindawi Journal of Function Spaces}
    pp. 1-8 (2018)
    \bibitem{Esmeral-Vas} {Esmeral, K., Vasilevski, N.L.:} 
    {{$C^{*}$-algebra generated by horizontal Toeplitz operators on the Fock space}}.
    {Bolet\'in de la Sociedad Matem\'atica Mexicana} 
    \textbf{22} (2), 567-582 (2016)
    
    \bibitem{ME-TR}{Maximenko, E.A., Teller\'ia-Romero, A.M.:}
    {{Radial operators in polyanalyticBargmann–Segal–Fock spaces}}.
    {Operator Theory: Advances and Applications}
    279, 277-305 (2020)
    \bibitem{BMR}{Barrera-Castel\'an, R.M., Maximenko, E.A., Ramos-Vazquez, G.:}
    {{Radial operators on polyanalytic weighted Bergman spaces}}.
    {Bol. Soc. Mat. Mex.} 
    27, 43 (2021)
   	\bibitem{Bauer-Herr-Vas} {Bauer, W., Herrera-Ya\~nez C.,  Vasilevski, N.L.:} 
   {{Eigenvalue characterization of radial operators on weighted Bergman spaces over the unit ball}}.
   {Integr. Equ. Oper. Theory} 
   \textbf{78} (2), 271-300 (2014)
	\bibitem{Grudsky}{Grudsky, S.M., Maximenko, E.A., Vasilevski, N.L.:}
	{{Radial Toeplitz operators on the unit ball and slowly oscillating sequences}}.
	{Communications in Mathematical Analysis}
	\textbf{14} (2), 77-94 (2013)
	
	\bibitem{Y-R} {Hern\'andez-Eliseo, Y., Ram\'irez-Ortega, J., Hern\'andez-Zamora, F.:}
	{{Toeplitz Operators Acting on True-Poly-Bergman Type Spaces of the 
			Two-Dimensional Siegel Domain: Nilpotent Symbols}}.
	{Hindawi, Journal of Function Spaces}
	\textbf{13}, 1--13 (2021)
	
		\bibitem{R-SN-D}{Ram\'irez-Ortega, J., S\'anchez-Nungaray, A.:}
	{{Poly-Bergman type spaces on the Siegel domain}}.
	{Communications in Mathematical Analysis}
	\textbf{14}  (2), 113-128 (2013)
		\bibitem{Kaplansky} {Kaplansky, I.:}
	{{The structure of certain operator algebras}}. 
	{Trans. Am. Math. Soc.} 
	\textbf{70}, 219-255 (1951)
	
	\bibitem{tesisYessica} {Hern\'andez-Eliseo, Y.:}
	{{Operadores de Toeplitz en Espacios Tipo Poli-Bergman del
	Dominio de Siegel $D_2$}}. 
	{Doctoral dissertation, Universidad Veracruzana, 2022.} 
	\texttt{https://cdigital.uv.mx/bitstream/handle/1944 \\ /52049/HernandezEliseoYessica.pdf}.
%
%
		
	
	
	
%
	

	
	
	


	
\end{thebibliography}
\end{document}